\documentclass[a4paper, 11pt]{article}
\usepackage{amsmath,amssymb,esint,amscd,xspace,fancyhdr,color,authblk,srcltx,fontenc,bbm}
\usepackage{hyperref}
\usepackage{cite}
\newtheorem{theorem}{Theorem}[section]

\newtheorem{corollary}[theorem]{Corollary}
\newtheorem{definition}[theorem]{Definition}
\newtheorem{lemma}[theorem]{Lemma}

\newtheorem{remark}[theorem]{Remark}
\newenvironment{proof}[1][Proof]{\textbf{#1.} }{\hfill\rule{0.5em}{0.5em}}
{\catcode`\@=11\global\let\AddToReset=\@addtoreset
\AddToReset{equation}{section}

\AddToReset{theorem}{section}

\newtheorem{taggedtheoremx}{Theorem}
\newenvironment{taggedtheorem}[1]
 {\renewcommand\thetaggedtheoremx{#1}\taggedtheoremx}
 {\endtaggedtheoremx}

\title{Revisiting multi-phase variational problems: A Muckenhoupt weight approach}

\author{Thanh-Nhan Nguyen\thanks{Group of Analysis and Applied Mathematics, Department of Mathematics, Ho Chi Minh City University of Education, Ho Chi Minh city, Vietnam; \texttt{nhannt@hcmue.edu.vn}}, Minh-Phuong Tran\footnote{Corresponding author} \thanks{Applied Analysis Research Group, Faculty of Mathematics and Statistics, Ton Duc Thang University, Ho Chi Minh City, Vietnam; \texttt{tranminhphuong@tdtu.edu.vn}}}

\date{\today}

\begin{document}
 
\maketitle
\begin{abstract}

In this paper, we investigate the regularity theory of local minimizers of multi-phase energy functionals. As a key feature of our work, instead of the classical H\"older continuity assumptions on the modulating coefficients and interaction between the growth exponents, we assume that these coefficients belong to a suitable class of Muckenhoupt weights. The presence of multiple growth phases with the degenerate or singular nature of Muckenhoupt weights poses substantial analytical difficulties that prevent a direct application of classical theory. Our approach requires a refinement of localized energy estimates and an adapted iteration scheme that exploits the reverse H\"older properties of the Muckenhoupt weights. As our main results, we establish the higher integrability, local boundedness, and H\"older continuity of local minimizers. Most notably, we prove the Harnack inequality for non-negative local minimizers, which, to the best of our knowledge, stands as the first result of its kind in the multi-phase setting involving Muckenhoupt modulating coefficients. This paper is a contribution toward a better understanding of the qualitative behavior of minimizers in non-uniformly elliptic variational problems, and offers a new framework that complements the existing literature beyond the classical H\"older continuity of modulating coefficients.

\medskip

\noindent 

\medskip

\noindent {\bf Keywords.} Regularity; Non-uniform ellipticity; Multi-phase; H\"older continuity; Harnack inequality;  Muckenhoupt weights. \\

\noindent  {\bf 2020 Mathematics Subject Classification.} 35B65; 35J70; 35J75; 42B37; 46E30.

\end{abstract}   
                  
\maketitle
\newpage
\tableofcontents
%-------------------------------------------------

\section{Introduction and main results}
\label{sec:intro}

\subsection{Problem setting} 

In this paper, we are interested in the local minimizers of variational integrals with multiple phases of the form
\begin{align}
\label{eq-main}
W^{1,1}_{\mathrm{loc}}(\Omega) \ni v \mapsto \mathbb{F}(v,\Omega):=\int_\Omega{\left(|\nabla v|^p + \sum_{i=1}^N{a_i(x)|\nabla v|^{p_i}} \right)dx},
\end{align}
where $\Omega \subset \mathbb{R}^n$ is a bounded open subset, with $n \ge 2$, the exponents $1<p \le p_1 \le p_2 \le  \cdots \le p_N<\infty$ and $a_i: \Omega  \to [0,\infty)$ are nonnegative modulating coefficients satisfying
\begin{align}
\label{eq:ai}
a_i \in A_{p_i}, \quad \text{for every } \ i \in I_N:=\{1,2,\cdots,N\}.
\end{align}
Here, $A_{p_i}$ denotes the Muckenhoupt weight class (see Definition \ref{def:Muck}).

The presence of the modulating coefficients $a_i(\cdot)$ allows our energy functional $\mathbb{F}$ to model the energy density of a composite material with multi-phase transitions governed by different growth phases. For instance, in the structure of functional $\mathbb{F}$, one describes a mixture of $N+1$ different materials consisting of a background phase with hardening exponent $p$ together with $N$ phases characterized by the power exponents $p_1, p_2, \dots, p_N$, respectively. The family of modulating coefficients $\{a_i(\cdot): i \in I_N\}$ dictates the geometric distribution of the composite material. Each zero-set $\Omega_i^0 := \{x \in \Omega : a_i(x) = 0\}$ represents the region of structural degeneracy where $p_i$-material component completely vanishes and the energy density governed by the remaining growth exponents. The mathematical formulation of integrals with multiple phases originates from Zhikov’s classic homogenization theory for strongly inhomogeneous materials and elasticity \cite{Zhikov1986, Zhikov1997}. Since the introduction, the double- and multi-phase functionals have attracted considerable attention in the regularity theory of variational integrals with non-standard growth. Furthermore, the study of double- and multi-phase energy functionals is also motivated by their applications to mathematical modeling of physical and mechanical processes, including nonlinear elasticity, electrorheological fluids, and image processing. 

The multi-phase functional of the type \eqref{eq-main} is closely related to the Euler-Lagrange equation
\begin{align*}
-\mathrm{div}\left(p|\nabla u|^{p-2}\nabla u + \sum_{i=1}^N{p_ia_i(x)|\nabla u|^{p_i-2}\nabla u} \right) = 0, \quad \text{in } \ \Omega,
\end{align*}
which exhibits a highly non-uniform and degenerate ellipticity. To see this, let us consider the vector field
\begin{align}\label{eq:HF}
H_\mathbb{F}(x,\xi):=  p|\xi|^{p-2}\xi + \sum_{i=1}^N{p_i a_i(x)}|\xi|^{p_i-2}\xi, \quad x \in \Omega, \ \xi \in \mathbb{R}^n.
\end{align}
The corresponding ellipticity ratio (the ratio between the highest eigenvalue of the Jacobian matrix $\partial_{\xi} H_\mathbb{F}(x,\xi)$ and the lowest eigenvalue governed by the base $p$-growth) is  proportional to $ 1 + \sum_{i=1}^N a_i(x)|\xi|^{p_i-p}$. This ratio, therefore, will blow up whenever the gradient variable $\xi$ tends to infinity in regions where at least one coefficient $a_i(x)$ is positive. In particular, as $|\xi| \to \infty$, the ellipticity ratio grows at the rate of $|\xi|^{p_N-p}$ whenever the leading coefficient $a_N(x)$ is positive. Moreover, the possible vanishing of the modulating coefficients $a_i(\cdot)$ induces transitions between different growth phases, leading to highly non-uniform elliptic structure throughout the domain.

The functional $\mathbb{F}$ belongs to a family of functionals with non-standard growth conditions, initiated by Marcellini \cite{Marcellini1989, Marcellini1991}. A model case that has traditionally attracted a lot of attention over the last few decades is the functional with $(p,q)$-growth, typically given by
\begin{align}\label{eq:mar}
v \mapsto \int_\Omega{f(x,\nabla v) dx},
\end{align}
where $f: \Omega \times \mathbb{R}^n \to \mathbb{R}$ satisfies the unbalanced polynomial growth conditions $|\xi|^p \lesssim f(x,\xi) \lesssim 1+|\xi|^q$ for every $\xi \in \mathbb{R}^n$ and $x \in \Omega$, with $1 < p \le q$. The case $p=q$ corresponds to the classical standard setting (cf. \cite{Min4}). Due to the lack of homogeneity, questions of regularity for minimizers of functional \eqref{eq:mar} driven by non-standard nonlinearities, including double-phase and multi-phase energies as prominent examples, have been the subject of extensive investigation. We refer to \cite{BaCoMin2015, BaCoMin2016, BaCoMin2018, BBO2021, Byun2017Cava, CM2015, CM2015_2, CM2016, FO2019, FM2019, FP2019, FM2021, F2022, FM2023, FM2023_ARMA, FP2024, FMM2004, ELM2004, FS1999} for a selection of remarkable contributions in this direction. 

\subsection{Relevant studies} 
Let us stress for a moment the double-phase model, corresponding to the case $N=1$. In this setting, the functional in \eqref{eq-main} reduces to 
\begin{align}\label{eq:double}
v \mapsto \mathbb{F}_{p,q}(v,\Omega):= \int_\Omega{\left(|\nabla v|^p + a(x)|\nabla v|^q \right)dx}, \quad 1<p \le q,
\end{align}
which exhibits a phase transition in its elliptic behavior dictated by the zero-set $\{x \in \Omega: a(x)=0\}$. The presence of the modulating term $a(x)$ brings several challenges beyond the scope of classical methodologies for the standard $p$-Laplacian. The primary obstruction lies in the loss of scale-invariance and homogeneity, coupled with the potential occurrence of the Lavrentiev phenomenon. In this specific context, a celebrated counterexample of the Lavrentiev phenomenon constructed by Zhikov in \cite{Zhikov1995} exhibits a strict energy gap between the two minimization problems subject to a given non-trivial boundary datum $u_0$, namely, 
$$ 
\inf_{v \in u_0 + W^{1,p}_0(\Omega)} \mathbb{F}_{p,q}(v,\Omega) < \inf_{v \in u_0 + W^{1,p}_0(\Omega) \cap W^{1,\infty}(\Omega)} \mathbb{F}_{p,q}(v,\Omega).
$$
Specifically, Zhikov showed that smooth functions may fail to be dense in the associated non-standard functional spaces, yielding an energy gap where the infimum over this space is strictly less than the infimum over smooth (or Lipschitz) functions. This phenomenon demonstrates that standard approximation may fail, thus invalidating classical analytical tools and posing an obstruction to regularity theory. To avoid this gap, Baroni, Colombo, and Mingione \cite{BaCoMin2015, BaCoMin2016, BaCoMin2018} (see also Colombo and Mingione \cite{CM2015, CM2015_2}) established a sharp necessary and sufficient condition that links the growth imbalance $q-p$ to the H\"older regularity of the coefficients $a(\cdot)$:
\begin{align}\label{eq:pqa}
q \le p+ \frac{\alpha p}{n}, \quad \text{and} \quad a(\cdot) \in C^{0,\alpha}(\Omega), \ \text{for} \ \ \alpha \in (0,1].
\end{align}
This condition, as proved, played a significant role in the regularity theory of double-phase functionals \eqref{eq:double}. Specifically, when \eqref{eq:pqa} holds, local minimizers of \eqref{eq:double} satisfies gradient H\"older continuous, $\nabla u \in C^{0,\beta}_{\text{loc}}(\Omega)$ for some $\beta \in (0,1)$. Afterwards, this regularity condition \eqref{eq:pqa} has inspired numerous subsequent studies in regularity theory for integral functionals regarding double-phase operators and non-uniformly elliptic problems. From pioneering works to recent ones, we list some representative contributions including \cite{Byun2017Cava, CM2016, FM2019, FP2019, FM2021, HOk2019, PNJFA, PNJMAA, BM2020, BO2017, Ok2017}. An extensive list of references can be found in the interesting survey by Mingione and R\u{a}dulescu \cite{MR2021}.

Besides the need to understand the regularity properties and analytical methods for minimizers, a substantial body of work has been devoted to establishing assumptions on various double-phase settings, particularly those concerning the absence of the Lavrentiev phenomenon, the interaction between the growth exponents $p, q$, and the regularity of the coefficient $a(\cdot)$. For instance, the seminal work \cite{CM2015_2} established regularity for bounded minimizers under the H\"older continuity assumption $a \in C^{0,\alpha}$ coupled with the sharp gap condition $q \le p+\alpha$. This framework was subsequently broadened in \cite{BGS2022}, where the authors considered $a \in C^{0,\alpha}(\overline{\Omega})$ alongside the relaxed bound $q \le p+\alpha \max\left\{1,\frac{p}{n}\right\}$. The recent study \cite{Borowski2025} dealt with higher-order H\"older coefficients $a \in C^{k,\alpha}$, yielding the extended gap condition $q \le p+(k+\alpha) \max\left\{1,\frac{p}{n}\right\}$. These references represent only a small portion of the extensive literature devoted to double-phase variational problems. Beyond the setting of H\"older-continuous coefficients, many other significant contributions have focused on more general structures. In this direction, it is also worth highlighting the recent contribution of Adamadze, Diening, Kopaliani, and Ok \cite{ADKO2026}, where a Muckenhoupt-type condition was introduced directly on a class of double-phase integrands.

The multi-phase functional \eqref{eq-main} arises as a natural generalization of the double-phase prototype \eqref{eq:double}. Structural regularity condition \eqref{eq:pqa} was subsequently extended to the triple-phase setting, corresponding to the case $N=2$ in the pioneering work of De Fillipis and Oh \cite{FO2019}. Specifically, to guarantee the absence of a Lavrentiev gap, the authors required a couple of conditions of the form
\begin{align}\label{eq:triple_cond}
\begin{cases}
0 \le a_1(\cdot) \in C^{0,\alpha_1}(\Omega), \quad 0 \le a_2(\cdot) \in C^{0,\alpha_2}(\Omega), \quad \text{for } \ \alpha_1, \alpha_2 \in (0,1],\\[5pt]
p_1 \le p+\displaystyle{\frac{\alpha_1 p}{n}}, \quad p_2 \le p + \displaystyle{\frac{\alpha_2 p}{n}}.
\end{cases}
\end{align}

It is worth emphasizing that the generalization from the double-phase to the triple-phase setting is far from a trivial extension.  Although the model case with three phases was investigated in \cite{FO2019}, one observes that the presence of multiple modulating coefficients gives rise to new analytical difficulties. More precisely, these challenges stem from the nontrivial interaction between different zero-sets of coefficients, where the energy functional may lose its uniform ellipticity due to the possible degeneracy of the modulating coefficients. Under the sharp assumption \eqref{eq:triple_cond}, the authors successfully derived the local $C^{1,\gamma}$-regularity for minimizers of the corresponding triple-phase functional. Moreover, their technique also provides a blueprint that allows one to verify the gradient H\"older continuity results for functionals with an arbitrary finite number of phases. After this work, there is by now an extensive literature on the study of regularity for minimizers of multi-phase functionals under sharp structural conditions regarding the H\"older continuity of modulating coefficients and the growth gaps among the exponents $p,p_1,\cdots,p_N$. More precisely, the sharp structural conditions in the literature typically take the form:
\begin{align}\label{eq:multi_cond}
0 \le a_i(\cdot) \in C^{0,\alpha_i}(\Omega), \ \alpha_i \in (0,1], \quad \text{and} \quad p_i \le p+\displaystyle{\frac{\alpha_i p}{n}}, \quad \text{for all } i \in I_N.
\end{align}
We refer the reader to contributions \cite{F2022, FRZZ2022, BBO2021, FP2024} for a selection of recent developments along this line. 

\subsection{Motivation} 
One observes that the structural constraints imposed on the modulating coefficients and the hardening powers in \eqref{eq:pqa}, \eqref{eq:triple_cond} and \eqref{eq:multi_cond} are natural and optimal for establishing higher regularity, particularly the local gradient H\"older continuity. In this setting, the large exponents $p_i$ are rebalanced by a higher order $\alpha_i$ of H\"older continuity of the corresponding coefficient $a_i(\cdot)$, thereby the interaction between different phases is well-controlled. After the seminal breakthrough by Baroni, Colombo, and Mingione \cite{CM2015, CM2015_2, BaCoMin2015, BaCoMin2018} on double-phase models, the H\"older continuity of $a_i(\cdot)$ becomes standard in later regularity results for multi-phase functionals. The condition \eqref{eq:multi_cond} not only successfully rules out the Lavrentiev gap, but also controls the interaction between different growth phases.  This structural feature is necessary to construct the De Giorgi iteration process, a fundamental step in proving the H\"older continuity of the minimizers. 

At this point, a natural question that may arise here is: in the case when the modulating coefficients are no longer H\"older continuous, namely, \eqref{eq:multi_cond} does not hold, what assumptions should be replaced to guarantee the absence of the Lavrentiev phenomenon? - To address this, let us first revisit the density arguments that play a key role in the seminal proof of \cite[Theorem 4.1]{CM2015}. The standard procedure relies on approximating Sobolev functions by smooth mollifications, whose gradients are pointwise controlled by the Hardy-Littlewood maximal operator $\mathbf{M}$. Consequently, passing to the limit in energy functionals strictly depends on the global boundedness of $\mathbf{M}$ in the corresponding energy space. Motivated by this argument, Muckenhoupt weights arise naturally, since the $A_{p_i}$ classes provide the necessary and sufficient condition for the maximal operator $\mathbf{M}$ to be bounded on weighted Lebesgue spaces, see \cite{Muckenhoupt1972}. Therefore, by treating the modulating coefficients $a_i(\cdot)$ as Muckenhoupt weights in \eqref{eq:ai}, it allows us to apply the Lebesgue Dominated Convergence Theorem to establish the density of smooth functions, and, in turn, rules out the appearance of the Lavrentiev phenomenon. We send the reader to Theorem \ref{theo:Lav} in Section \ref{sec:Lav}. 

In this paper, the primary objective is to address the multi-phase functionals \eqref{eq-main} from a different perspective, under a distinct class of structural conditions imposed on the modulating coefficients $a_i(\cdot)$. As aforementioned, instead of the classical regularity conditions \eqref{eq:multi_cond}, we assume that the coefficients $a_i$ belong to an appropriate Muckenhoupt weight class, as specified in \eqref{eq:ai} (we refer to the subsequent section for the precise definition and notation). By turning from the classical H\"older continuity to Muckenhoupt weight conditions, it enables us to handle a class of irregular modulating coefficients. Indeed, from a physical point of view, in the study of strongly anisotropic materials, the coefficients $a_i$ governing the mixture between different phases may exhibit discontinuities or singularities. Consequently, the H\"older continuity assumptions seem to be restrictive. The Muckenhoupt classes permit the coefficients to vanish or blow up on sets of measure zero by employing an integral condition in a weighted sense, which successfully controls the degenerate or singular elliptic behavior of the functionals. 

A prototypical example of a coefficient $a(\cdot)$ belonging to some admissible $A_p$ class, with the lack of H\"older continuity, is the weight $a(x) = |x|^{-\kappa}$ defined on $\mathbb{R}^n$. In harmonic analysis, it is known that $a(\cdot) \in A_p$ if and only if the exponent satisfies the sharp range $-n(p-1) < \kappa < n$; see, for instance, \cite{55Gra}. Moreover, when $0<\kappa<n$, this function blows up at the origin and is therefore not H\"older continuous; nevertheless, controlled under the Muckenhoupt setting. Besides these classical power-type weights, one can construct Muckenhoupt modulating coefficients with logarithmic singularities. Indeed, given $q > 1$ and $\beta > 1-q$, with $\beta \neq 0$, we  consider the weight function $a: \mathbb{R}^n \to [0, \infty)$ defined by
\begin{align*}
a(x) = \left( \chi_{\mathbb{R}^n \setminus B_{1/e}}(x) + \chi_{B_{1/e}}(x) \log\left(\frac{1}{|x|}\right) \right)^\beta, \quad x \in \mathbb{R}^n.
\end{align*}
It is known that logarithmic weights of this type belong to the Muckenhoupt class $A_q$ whenever $\beta > 1-q$. However, when $\beta>0$, this weight exhibits a logarithmic blow-up at the origin, and therefore fails to be H\"older continuous.

These above examples illustrate that the analysis of multi-phase functionals with non-H\"older continuous modulating coefficients is not only natural but also of independent interest.

\subsection{Main results and Strategy} 
The present paper is one of the contributions towards regularity theory for multi-phase functionals. Aiming at a deeper understanding of the behavior of minimizers to variational problems with non-standard growth, we provide a new structural setting to derive regularity of minimizers through independent proofs and analytical arguments. 

Let us now move our attention to the statements of our main result in this paper. Here and in the rest of the paper, we shall use the notation
\begin{align}\label{def-Hxt}
\mathcal{H}(x,t) := t^p + \sum_{i=1}^N a_i(x) t^{p_i}, \quad \text{for} \  (x,t) \in \Omega \times [0,\infty).
\end{align}
Under the initial assumption \eqref{eq:ai}, the local minimizer of $\mathbb{F}$ can be defined as follows.
\begin{definition}[Local minimizer]
\label{def:local_min}
A function $u \in W^{1,\mathcal{H}}(\Omega)$ is said to be a local minimizer of the functional $\mathbb{F}$ defined in \eqref{eq-main} if it satisfies the minimality condition
\begin{align}\label{var-form}
\mathbb{F}(u,\mathrm{supp}(u-w)) \le \mathbb{F}(w,\mathrm{supp}(u-w)),
\end{align}
for any $w \in W_{\mathrm{loc}}^{1,\mathcal{H}}(\Omega)$ such that $\mathrm{supp}(u-w) \Subset \Omega$.
\end{definition}
In what follows, our primary objective is to derive a series of a priori estimates for local minimizers of the multi-phase functional $\mathbb{F}$.  For the sake of brevity, we will hereafter refer to local minimizers as minimizers throughout the paper. On the other hand, the constants in the estimates depend upon some critical parameters assigned in the problem. We shall summarize the dependence on ``structural data of the problem'', as follows:
\begin{align*}
\mathtt{data} := \left\{ n, N, \{p_i\}_{i=1}^N, \{[a_i]_{A_{p_i}}\}_{i=1}^N \right\}.
\end{align*}

Before stating the main theorems, let us briefly outline the regularity results established, as well as discuss the technical roadmap employed in this paper.  Although our approach draws inspiration from the pioneering method developed for double-phase problems, the structure of the multi-phase functionals coupled with Muckenhoupt weights requires a nontrivial reconstruction of the classical arguments. Our regularity roadmap is structured as follows. We first establish the higher integrability of the gradient, followed by the local boundedness ($L^\infty$-regularity) of the minimizers of multi-phase functionals \eqref{eq-main}. In the next stage, building upon these estimates, we derive the local H\"older continuity of the minimizers by pointing out the decay of their oscillation. This process also involves proving a specialized level-set inequality. Finally, we deduce the Harnack inequality for nonnegative minimizers. This result is of independent interest and provides further quantitative control over the oscillation of the minimizers. 

As a crucial first step, Theorem \ref{theo:higher_int} establishes the local higher integrability of the gradients, which reveals the self-improving property of the multi-phase energy under the structural assumption \eqref{eq:ai}. 
\begin{taggedtheorem}{A}
\label{theo:higher_int}
Let $u \in W^{1,\mathcal{H}}(\Omega)$ be a local minimizer of the multi-phase functional $\mathbb{F}$ defined in \eqref{eq-main} under the assumption \eqref{eq:ai}. Then, $\mathcal{H}(\cdot, |\nabla u|) \in L^{1+\delta_0}_{\mathrm{loc}}(\Omega)$ for some integrability exponent $\delta_0 = \delta_0(\mathtt{data}) \in (0,1)$. More precisely, there exists a constant $C = C(\mathtt{data}) > 0$ such that the following inequality
\begin{equation}\label{eq:gehring}
\left( \fint_{B_{r}} \big[\mathcal{H}(x, |\nabla u|)\big]^{1+\delta_0}  dx \right)^{\frac{1}{1+\delta_0}} \le C \fint_{B_{2r}} \mathcal{H}(x, |\nabla u|)  dx
\end{equation}
holds for every ball $B_r$ such that $B_{2r} \Subset \Omega$.
\end{taggedtheorem}
With respect to the bound \eqref{eq:gehring}, it shows that the gradients of local minimizers in $W^{1,\mathcal{H}}(\Omega)$ enjoy a slightly higher integrability under the constraint \eqref{eq:ai}. We will provide the detailed proof of this theorem in Section \ref{sec:high_int}. In essence, the steps of the proof are inspired by the classical Gehring-type arguments, nontrivially adapted to the multi-phase settings with possibly singular or degenerate Muckenhoupt weights $a_i(\cdot)$. Building upon this result, we subsequently establish the local boundedness of the minimizers in  Theorem \ref{theo-Linf}, whose proof relies on Caccioppoli-type estimates and a subtle adaptation of the celebrated De Giorgi iterative scheme.

\begin{taggedtheorem}{B}
\label{theo-Linf}
Let $u \in W^{1,\mathcal{H}}(\Omega)$ be a local minimizer of the multi-phase functional $\mathbb{F}$ defined in \eqref{eq-main} under the assumption \eqref{eq:ai}. Then, $u \in L_{\mathrm{loc}}^\infty(\Omega)$. Specifically, for any ball $B:=B_r$ such that $2B= B_{2r} \Subset \Omega$, there exists a constant $C = C(\mathtt{data})>0$ such that the following estimate holds:
\begin{align}\label{Linf-u}
\fint_{2B}\mathcal{H}\left(x,\frac{\|u - (u)_{2B}\|_{L^{\infty}(B)}}{r}\right)dx \le C \fint_{2B} \mathcal{H}\left(x,\frac{|u - (u)_{2B}|}{r}\right)dx.
\end{align}
Moreover, if the lower level-set condition $|\{x \in 2B: \, u(x) = 0\}| \ge \sigma_0 |B|$ is satisfied for some $\sigma_0 \in (0, 1)$. Then, there holds
\begin{align}\label{Linf-u-new}
\fint_{2B}\mathcal{H}\left(x,\frac{\|u\|_{L^{\infty}(B)}}{r}\right)dx \le \widetilde{C} \fint_{2B} \mathcal{H}\left(x,\frac{|u|}{r}\right)dx.
\end{align}
Here, $\widetilde{C}$ represents a positive constant depending only on $\mathtt{data}, \sigma_0$.
\end{taggedtheorem}

The estimates \eqref{Linf-u} and \eqref{Linf-u-new} in Theorem \ref{theo-Linf} provide the local $L^\infty$-bounds of minimizers. Specifically, \eqref{Linf-u} yields a quantitative control of the local oscillation in terms of the multi-phase energy density $\mathcal{H}(x,\cdot)$, thereby deriving an a priori $L^\infty$-estimate. In this sense, the local oscillation of minimizers is bounded by the local energy. Moreover, under the additional measure condition on the level set, estimate \eqref{Linf-u-new} removes the dependence on the local average value $(u)_{2B}$, leading to a more intrinsic bound that will play an important role in the derivation of the subsequent level-set inequalities and continuity estimates. Further, the local boundedness of minimizers follows as an immediate consequence, which is formally stated in Corollary \ref{coro-Linf}. The proofs of these results are accomplished in Section \ref{sec:Linf}.

\begin{corollary}\label{coro-Linf}
Under assumptions of Theorem \ref{theo-Linf}, the  following estimate holds true:
\begin{align}\label{Linf-u-coro}
\|u\|_{L^{\infty}(B)} \le C r\left[\fint_{2B} \mathcal{H}\left(x,\frac{|u - (u)_{2B}|}{r}\right)dx\right]^{\frac{1}{p}} + (u)_{2B},
\end{align}
for every ball $B:=B_r$ such that $2B:=B_{2r} \Subset \Omega$.
\end{corollary}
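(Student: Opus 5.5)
Corollary \ref{coro-Linf} will be derived directly from estimate \eqref{Linf-u} of Theorem \ref{theo-Linf}, by discarding the modulating terms on the left-hand side and then undoing the $p$-homogeneous base phase. Set $M:=\|u-(u)_{2B}\|_{L^\infty(B)}$. Since $a_i\ge 0$, we have $\mathcal{H}(x,t)\ge t^p$ for all $x$ and $t\ge 0$. Hence the left-hand side of \eqref{Linf-u} is bounded below by
\begin{align*}
\fint_{2B}\mathcal{H}\left(x,\frac{M}{r}\right)dx \ \ge\ \left(\frac{M}{r}\right)^p .
\end{align*}
Combining this with \eqref{Linf-u} gives
\begin{align*}
\left(\frac{M}{r}\right)^p \le C \fint_{2B} \mathcal{H}\left(x,\frac{|u-(u)_{2B}|}{r}\right)dx .
\end{align*}
Taking the $p$-th root yields $M\le C^{1/p} r\left[\fint_{2B}\mathcal{H}\left(x,\frac{|u-(u)_{2B}|}{r}\right)dx\right]^{1/p}$. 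The new constant $C^{1/p}$ still depends only on $\mathtt{data}$.

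Next we pass from the oscillation to $u$ itself with the triangle inequality. For a.e. $x\in B$,
\begin{align*}
|u(x)|\le |u(x)-(u)_{2B}|+|(u)_{2B}|\le M+|(u)_{2B}|.
\end{align*}
Taking the essential supremum over $B$ gives \eqref{Linf-u-coro}, with $|(u)_{2B}|$ in place of $(u)_{2B}$. As printed, the statement has $(u)_{2B}$ without absolute value. That version is only literally correct when $(u)_{2B}\ge 0$. Otherwise it should be read with $|(u)_{2B}|$, or as a one-sided bound on $\sup_B u$, which follows from the same argument using $u\le M+(u)_{2B}$. I will note this and prove the version with $|(u)_{2B}|$.

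There is no genuine obstacle, since all the analytical work sits in Theorem \ref{theo-Linf}. The only points to check are the following:
\begin{itemize}
\item The right-hand side of \eqref{Linf-u} is finite, because $u\in W^{1,\mathcal{H}}(\Omega)$ and the modular of $u-(u)_{2B}$ over $2B$ is finite.
\item The averaged quantity on the left of \eqref{Linf-u} is a constant evaluated inside $\mathcal{H}$, so the lower bound $(M/r)^p$ is immediate.
\end{itemize}
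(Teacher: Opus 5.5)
Your proof is correct and follows the paper's own argument. You bound the left side of \eqref{Linf-u} below by $(M/r)^p$ using $a_i\ge 0$, take $p$-th roots, and recover $u$ from $u-(u)_{2B}$ by the triangle inequality. Your insistence on $|(u)_{2B}|$ is justified, and the paper's one-line proof glosses over this: the printed inequality fails for any constant minimizer $u\equiv c<0$, where the right-hand side equals $c<0$ while $\|u\|_{L^\infty(B)}=|c|$.
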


Arising from the local boundedness result established in Theorem \ref{theo-Linf}, in the next stage, we address the local H\"older continuity of minimizers via Theorem \ref{theo:Holder-cont}. 
\begin{taggedtheorem}{C}
\label{theo:Holder-cont}
Let $u \in W^{1,\mathcal{H}}(\Omega)$ be a local minimizer of the multi-phase functional $\mathbb{F}$ defined in \eqref{eq-main} under the assumption \eqref{eq:ai}. Then, there exists $\alpha=\alpha(\mathtt{data}) \in (0,1)$ such that $u \in C^{0, \alpha}_{\mathrm{loc}}(\Omega)$.
\end{taggedtheorem}

Finally, we turn our attention to the Harnack inequality for non-negative minimizers of multi-phase functional \eqref{eq-main}, as stated in Theorem \ref{theo:Harnack}. By asserting that the local supremum of a non-negative minimizer is controlled by its local infimum up to a constant, the estimate \eqref{ineq-Harnack} provides a quantitative tool to control the local behavior of minimizers. 
\begin{taggedtheorem}{D}
\label{theo:Harnack}
Let $u \in W_{\mathrm{loc}}^{1,\mathcal{H}}(\Omega)\cap L_{\mathrm{loc}}^{\infty}(\Omega)$ be a non-negative minimizer of functional $\mathbb{F}$ defined in \eqref{eq-main} under the assumption \eqref{eq:ai}. Then, for every ball $B_{10r} \equiv B_{10r}(x_0) \subset \Omega$, there exists a constant $C=C(\mathtt{data}) \ge 1$ such that 
\begin{align}
\label{ineq-Harnack}
\sup_{B_r} u \le C \inf_{B_r} u.
\end{align}
\end{taggedtheorem}

The proof is in Section \ref{sec:Harnack}. It is also worth emphasizing here that establishing the Harnack inequality \eqref{ineq-Harnack} is highly non-trivial in our setting due to the presence of possibly irregular modulating coefficients. More precisely, by exploiting the weighted structure induced by the Muckenhoupt coefficients, the proof relies on a delicate combination of two independent one-sided estimates: the local upper bound controlling the supremum, and the weak Harnack inequality governing the infimum. Lastly, by carefully bridging these two complementary bounds through a scaling argument, we deduce the full Harnack inequality \eqref{ineq-Harnack}.

The layout of this paper is organized as follows. In Section \ref{sec:pre}, we collect some notations, basic definitions, as well as auxiliary tools that will be employed in the rest of the paper. Section \ref{sec:Lav} is devoted to establishing the absence of the Lavrentiev phenomenon for multi-phase energy functionals $\mathbb{F}$ featuring the Muckenhoupt modulating coefficients $a_i(\cdot)$. In this section, we also prove the boundedness of the Hardy-Littlewood maximal operator on weighted Lebesgue spaces, which ensures the convergence of the approximation scheme via smooth mollifiers. The proofs of the higher integrability (Theorem \ref{theo:higher_int}) and local boundedness results (Theorem \ref{theo-Linf}, along with its associated corollary) are detailed in Sections \ref{sec:high_int} and \ref{sec:Linf}, respectively. Next, in Section \ref{sec:holder_cont}, we derive the crucial De Giorgi-type level-set inequalities, which provide the key ingredient for establishing our main result regarding the local H\"older continuity of minimizers, as stated in Theorem \ref{theo:Holder-cont}. Section \ref{sec:Harnack} concludes the paper by providing the proof of the Harnack inequality asserted in Theorem \ref{theo:Harnack}. 

\section{Background materials}
\label{sec:pre}

In this section, we introduce our notation used throughout the paper, collect the necessary background on relevant function spaces, and recall some auxiliary lemmas that will be employed in the subsequent proofs.

\subsection{Standard notation} 
In what follows, we always use $C$ to denote a positive constant, whose exact value may differ from line to line. Specific occurrences will be denoted by $C_1, C_2, \tilde{C}$, or likewise. For the sake of readability, dependencies of the constants will be explicitly stated at the end of chains of estimates. For instance,  by writing $C(\cdot)$, we emphasize that the constant depends solely on the relevant parameters listed in the parentheses. We denote by $B_R(y_0)$ the open ball in $\mathbb{R}^n$ centered at $y_0 \in \mathbb{R}^n$ with radius $R>0$. When clear by the context, we will often leave out the center of the ball, just writing $B_R \equiv B_R(y_0)$. For a measurable subset $E \subset \mathbb{R}^n$, we denote by $|E|$ the $n$-dimensional Lebesgue measure of $E$. For a given measurable subset $E \subset \mathbb{R}^n$ satisfying $0<|E|<\infty$ and $f: E \to \mathbb{R}$ a locally integrable function, we define its integral average over $E$ by
\begin{align*}
(f)_E: = \fint_{E}{f(x) dx} = \frac{1}{|E|} \int_{E}{f(x) dx}.
\end{align*}

\subsection{Basic definitions and properties} 
Next, let us make a few definitions and properties regarding the Muckenhoupt weights, the reverse H\"older class, Riesz potentials, and Hardy-Littlewood maximal operators, all of which will play a crucial role in our proofs. 
\begin{definition}[Muckenhoupt weights]
\label{def:Muck}
Let $\gamma > 1$ and $\omega \in L^1_{\mathrm{loc}}(\mathbb{R}^n)$ be a non-negative weight. We say that $\omega$ belongs to the Muckenhoupt class $A_{\gamma}$ if 
\begin{equation}\label{A-gamma}
[\omega]_{A_{\gamma}}:= \sup_{B \subset \mathbb{R}^n} \left(\fint_{B} \omega(x)  dx\right) \left(\fint_{B} \omega(x)^{-\frac{1}{\gamma-1}}  dx\right)^{\gamma-1} < \infty, 
\end{equation}
where the supremum is taken over all balls $B \subset \mathbb{R}^n$. Moreover, we define the class $A_{\infty}$ by
\begin{equation*}
A_{\infty} := \bigcup_{\gamma > 1} A_{\gamma}.
\end{equation*}
\end{definition}
An important feature of the Muckenhoupt classes is the self-improving property, which states that if $\omega \in A_\gamma$, then $\omega \in A_{\gamma-\varepsilon}$, for a sufficiently small $\varepsilon > 0$. In addition, any Muckenhoupt weight also  satisfies a strong doubling condition, namely, there exists a constant $C \ge 1$ such that
\begin{align*}
\omega(2B) \le C \omega(B), \quad \text{for all balls } B \subset \mathbb{R}^n,
\end{align*}
where we denote $\omega(E):= \int_E \omega(x) dx$, for any measurable subset $E$.
\begin{definition}[Reverse H\"older classes]
\label{def:RH}
Let $\lambda > 1$, then a non-negative function $\omega \in L^1_{\mathrm{loc}}(\mathbb{R}^n)$ is said to satisfy the reverse H\"older condition of order $\lambda$, denoted by $\omega \in RH_{\lambda}$, if there exists a constant $C \ge 1$ such that the following estimate holds for all balls $B \subset \mathbb{R}^n$:
\begin{align}
\label{RH-lambda}
\left(\fint_{B} \big[\omega(x)\big]^{\lambda}dx\right)^{\frac{1}{\lambda}} \le C \fint_{B} \omega(x)dx,
\end{align}
where the constant $C$ is independent of $B$.
\end{definition}
\begin{remark}\label{rmk:self-improve}
It is worth noting that the Muckenhoupt and reverse H\"older classes exhibit opposite monotonicity properties with respect to their indices. More precisely, for any $1 < \lambda_1 < \lambda_2$, we have $A_{\lambda_1} \subset A_{\lambda_2}$ and $RH_{\lambda_2} \subset RH_{\lambda_1}$. In addition, reverse H\"older classes enjoy a remarkable self-improving property: if $\omega \in RH_{\lambda}$ for some $\lambda > 1$, then $\omega \in RH_{\lambda + \varepsilon}$ for a sufficiently small $\varepsilon > 0$. This higher integrability property will be repeatedly employed in the weighted estimates developed in the sequel. 
\end{remark}
\begin{remark}
\label{rmk:Muc-RH}
Another remarkable property is that the union of all Muckenhoupt classes coincides precisely with the union of all reverse H\"older classes. More specifically, as a classical result established by Coifman and Fefferman \cite{CF1974}, we have:
$$A_\infty \equiv \bigcup_{\gamma>1} A_{\gamma} = \bigcup_{\lambda>1} RH_{\lambda}.$$
\end{remark}

The interplay between Muckenhoupt weights and reverse H\"older classes yields the refined weighted conditions essential to regularity theory. It is therefore natural to introduce the following two-parameter Muckenhoupt class the following two-parameter Muckenhoupt class, which combines the higher integrability properties of the weight and its dual weight. This condition will repeatedly arise in the subsequent weighted estimates. 
\begin{definition}[Two-parameter Muckenhoupt class]
\label{def:2w_muck}
Let $\gamma, \lambda > 1$ and let $\omega \in L^1_{\mathrm{loc}}(\mathbb{R}^n)$ be a non-negative function. We say that $\omega$ belongs to the two-parameter Muckenhoupt class $A_{\gamma,\lambda}$ if 
\begin{align}\label{A-gamma-lamb}
[\omega]_{A_{\gamma,\lambda}} := \sup_B \left(\fint_B [\omega(x)]^\lambda dx\right)^{\frac{1}{\lambda}} \left(\fint_B [\omega(x)]^{-\frac{\gamma}{\gamma-1}} dx\right)^{\frac{\gamma-1}{\gamma}} < \infty,
\end{align}
where the supremum is taken over all balls $B \subset \mathbb{R}^n$.
\end{definition}
In view of \eqref{A-gamma} and \eqref{A-gamma-lamb}, one can observe that the two-parameter class $A_{\gamma,\lambda}$ is deeply linked to the classical Muckenhoupt weights. Moreover, it is straightforward to verify that $\omega$ belongs to the two-parameter class if and only if its $\lambda$-th power falls into a classical Muckenhoupt class. More precisely, we have the equivalence:
\begin{align}
\omega \in A_{\gamma,\lambda} \ \mbox{ if and only if } \ \omega^{\lambda} \in A_{1+\frac{\lambda(\gamma-1)}{\gamma}}.
\end{align}
This observation allows us to transfer many classical properties of $A_p$-weights directly to the class $A_{\gamma,\lambda}$.

Now, we recall the definitions and basic properties of the Riesz potentials and the Hardy–Littlewood maximal operators. These classical operators play a crucial role in the subsequent sections.
\begin{definition}[Riesz potential]
\label{def:I}
Given $\alpha \in (0,n)$, the Riesz potential of order $\alpha$ of a locally integrable function $f \in L^1_{\mathrm{loc}}(\mathbb{R}^n)$, denoted by $\mathbf{I}_{\alpha}$, is defined by
\begin{align}\label{Ifx}
\mathbf{I}_{\alpha} f(x) = \int_{\mathbb{R}^n} \frac{f(y)}{|x-y|^{n - \alpha}} dy, \ \mbox{ for } x \in \mathbb{R}^n \mbox{ and } f \in L^1_{\mathrm{loc}}(\mathbb{R}^n).
\end{align}
Furthermore, given a ball $B \subset \mathbb{R}^n$, we shall use the notation $\mathbf{I}_{\alpha, B} f := \mathbf{I}_{\alpha} \big(\chi_B f\big)$, which will be referred to as the localized Riesz potential of $f$ on $B$. 
\end{definition}
The following lemma, due to Muckenhoupt and Wheeden, describes the weighted boundedness of $\mathbf{I}_\alpha$ entirely in terms of the two-parameter Muckenhoupt class in Definition \ref{def:2w_muck}. For the detailed proof, we refer the reader to \cite{MW1974}. 
\begin{lemma}[Boundedness of Riesz potential]
\label{lem-Ib}
Let $\omega\in L^1_{\mathrm{loc}}(\mathbb R^n,\mathbb R^+)$ and let $\alpha,\gamma,\lambda$ satisfy the Sobolev relation
\begin{align}\notag
0< \alpha <n, \ 1< \gamma < \frac{n}{\alpha}, \mbox{ and } \ \frac{1}{\lambda} = \frac{1}{\gamma} - \frac{\alpha}{n}.
\end{align}
Then the Riesz potential $\mathbf I_\alpha$ is bounded from
$L^\gamma(\mathbb R^n,\omega^\gamma dx)$ into
$L^\lambda(\mathbb R^n,\omega^\lambda dx)$, namely, there exists a constant $C > 0$, independent of $f$, such that
\begin{align}\label{I-alpha-bound}
\left( \int_{\mathbb{R}^n} \left| \mathbf{I}_\alpha f(x) \right|^\lambda \omega(x)^\lambda  dx \right)^{\frac{1}{\lambda}} \le C \left( \int_{\mathbb{R}^n} \left| f(x)\right|^\gamma \omega(x)^\gamma  dx \right)^{\frac{1}{\gamma}}
\end{align}
for every $f \in L^\gamma(\mathbb R^n,\omega^\gamma dx)$. Moreover, the estimate \eqref{I-alpha-bound} holds if and only if $\omega \in A_{\gamma,\lambda}$.
\end{lemma}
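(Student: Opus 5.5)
The plan is to prove sufficiency and necessity separately. The sufficiency direction goes through the fractional maximal operator
\[
\mathbf{M}_\alpha f(x):=\sup_{B\ni x}|B|^{\frac{\alpha}{n}}\fint_B|f(y)|\,dy .
\]
The first step is to record two consequences of $\omega\in A_{\gamma,\lambda}$. By the equivalence stated after Definition \ref{def:2w_muck}, $\omega^\lambda\in A_{1+\lambda(\gamma-1)/\gamma}\subset A_\infty$. Moreover, $\sigma:=\omega^{-\gamma'}$, with $\gamma'=\gamma/(\gamma-1)$, is the dual weight of $\omega^\lambda$ up to a power, so it also lies in $A_\infty$. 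By Remark \ref{rmk:Muc-RH} and the self-improvement in Remark \ref{rmk:self-improve}, $\sigma$ then satisfies a reverse H\"older inequality $\sigma\in RH_{1+\varepsilon}$ for some $\varepsilon>0$.

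Next, I will establish the Muckenhoupt--Wheeden good-$\lambda$ inequality: for some $c>0$, all small $\eta>0$ and all $t>0$,
\[
\omega^\lambda\bigl(\{|\mathbf I_\alpha f|>2t,\ \mathbf M_\alpha f\le \eta t\}\bigr)\le C\eta^{c}\,\omega^\lambda\bigl(\{|\mathbf I_\alpha f|>t\}\bigr).
\]
To prove it, I decompose $\{|\mathbf I_\alpha f|>t\}$ into Whitney cubes $Q$. On each $Q$ I split $f=f\chi_{3Q}+f\chi_{(3Q)^c}$. The far part is essentially constant on $Q$ and is controlled by $t$ plus $\mathbf M_\alpha f$ at a point of $Q$. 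The near part is handled by the weak-type $(1,\frac{n}{n-\alpha})$ bound for $\mathbf I_\alpha$, which gives a Lebesgue-measure estimate of order $\eta^{n/(n-\alpha)}|Q|$. The $A_\infty$ property of $\omega^\lambda$ converts this into the weighted estimate. Integrating the good-$\lambda$ inequality in $t$ yields $\|\mathbf I_\alpha f\|_{L^\lambda(\omega^\lambda)}\le C\|\mathbf M_\alpha f\|_{L^\lambda(\omega^\lambda)}$, first for $f$ for which the left side is finite (e.g.\ bounded $f$ with compact support) and then for general $f$ by monotone convergence.

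The remaining, and in my view hardest, step is the off-diagonal bound
\[
\|\mathbf M_\alpha f\|_{L^\lambda(\omega^\lambda)}\le C\|f\|_{L^\gamma(\omega^\gamma)}.
\]
I would first reduce to a dyadic version $\mathbf M_\alpha^{\mathcal D}$ via the one-third trick. Then I run a Calder\'on--Zygmund decomposition at levels $a^k$, with $a$ large, obtaining maximal cubes $Q_{k,j}$ with pairwise disjoint sparse portions $E_{k,j}$, $|E_{k,j}|\ge\frac12|Q_{k,j}|$. This gives
\[
\int(\mathbf M^{\mathcal D}_\alpha f)^\lambda\omega^\lambda\lesssim\sum_{k,j}\Bigl(|Q_{k,j}|^{\frac{\alpha}{n}-1}\int_{Q_{k,j}}|f|\Bigr)^\lambda\omega^\lambda(Q_{k,j}).
\]
On each cube I apply H\"older with an exponent $s<\gamma$, taken close to $\gamma$, so that the dual factor $\bigl(\fint_Q\omega^{-s'}\bigr)^{1/s'}$ is dominated by $\bigl(\fint_Q\sigma\bigr)^{1/\gamma'}$. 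This domination is exactly where the reverse H\"older property of $\sigma$ is used. The $A_{\gamma,\lambda}$ condition, combined with $\frac1\lambda=\frac1\gamma-\frac\alpha n$, then cancels all the powers of $|Q|$ together with $\omega^\lambda(Q)^{1/\lambda}$. Each term is thereby bounded by $\bigl(\int_{E_{k,j}}[\mathbf M(|f\omega|^s)]^{\gamma/s}\bigr)^{\lambda/\gamma}$. Finally, since $\lambda>\gamma$, the embedding $\ell^\gamma\subset\ell^\lambda$ lets me sum over the disjoint sets $E_{k,j}$, and the unweighted $L^{\gamma/s}$-boundedness of $\mathbf M$ concludes the argument. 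The delicate point is keeping the exponent bookkeeping consistent between $s$, $\gamma'$ and the reverse H\"older exponent.

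For necessity, assume \eqref{I-alpha-bound} holds. Fix a ball $B$ and test with $f=\min\{\omega^{-\gamma'},m\}\chi_B$, letting $m\to\infty$ at the end (or truncating $\omega$ from below if needed). For $x\in B$ we have $\mathbf I_\alpha f(x)\ge c|B|^{\frac{\alpha}{n}-1}\int_B f$. Inserting this into \eqref{I-alpha-bound} gives
\[
|B|^{\frac{\alpha}{n}-1}\Bigl(\int_B\sigma\Bigr)\bigl(\omega^\lambda(B)\bigr)^{1/\lambda}\le C\Bigl(\int_B\sigma\Bigr)^{1/\gamma}.
\]
Rearranging and using the Sobolev relation turns this into $[\omega]_{A_{\gamma,\lambda}}<\infty$. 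The argument also shows that $\sigma$ is locally integrable, which the sufficiency part assumes implicitly.
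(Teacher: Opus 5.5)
The paper does not prove this lemma; it states it and refers to \cite{MW1974}. Your proposal is correct, and it gives a self-contained reconstruction of the Muckenhoupt--Wheeden scheme announced there. It has three parts: control of $\mathbf{I}_\alpha$ by $\mathbf{M}_\alpha$ in $L^\lambda(\omega^\lambda)$ using only $\omega^\lambda\in A_\infty$; an off-diagonal weighted bound for $\mathbf{M}_\alpha$; and necessity by testing with truncations of $\omega^{-\gamma'}\chi_B$. In the necessity step, $\sigma^\gamma\omega^\gamma=\sigma$ and the powers of $|B|$ cancel by the Sobolev relation, so it works as you describe.

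For the maximal step you use a sparse dyadic argument. This is a standard way to do it, and it makes the dependence of the constant on $[\omega]_{A_{\gamma,\lambda}}$ and on the reverse H\"older constant of $\sigma$ explicit. The bookkeeping closes as you expect:
\begin{itemize}
\item $\sigma=\omega^{-\gamma'}$ is exactly the dual weight $(\omega^\lambda)^{1-q_0'}$ of $\omega^\lambda\in A_{q_0}$, $q_0=1+\lambda/\gamma'$, with no extra power, so it satisfies a reverse H\"older inequality.
\item The $A_{\gamma,\lambda}$ constant absorbs $(\fint_Q\sigma)^{\lambda/\gamma'}\fint_Q\omega^\lambda$.
\item The identity $1+\alpha\lambda/n=\lambda/\gamma$ leaves exactly the expression you want.
\item The fractional Calder\'on--Zygmund cubes are sparse for large $a$: maximality gives $\sum_i|Q_{k+1,i}|^{1-\alpha/n}\le 2^{n-\alpha}a^{-1}|Q_{k,j}|^{1-\alpha/n}$, and $1-\alpha/n<1$.
\end{itemize}
Your argument also shows that the first sentence of the lemma as printed, asserting boundedness for an arbitrary $\omega\in L^1_{\mathrm{loc}}$, is false. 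What is true is the equivalence with $\omega\in A_{\gamma,\lambda}$, and that is exactly what your two directions prove.

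Two details need to be made explicit when you write this out.

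\textbf{Finiteness of the left side.} You need $\|\mathbf{I}_\alpha f\|_{L^\lambda(\omega^\lambda)}<\infty$ for bounded compactly supported $f$ in order to absorb the good-$\lambda$ term, and this is not automatic. Since $\mathbf{I}_\alpha|f|(x)\le C_f(1+|x|)^{\alpha-n}$, you need $\int\omega^\lambda(1+|x|)^{-(n-\alpha)\lambda}\,dx<\infty$. But $(n-\alpha)\lambda=nq_0$ is exactly the critical exponent for $A_{q_0}$. The plain growth bound $\omega^\lambda(B_R)\lesssim R^{nq_0}\omega^\lambda(B_1)$ only gives a divergent dyadic sum, so you must invoke openness, $\omega^\lambda\in A_{q_0-\epsilon}$, to gain a factor $R^{-n\epsilon}$.

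\textbf{The far part in the good-$\lambda$ step.} Run the argument for $\mathbf{I}_\alpha|f|$. It is lower semicontinuous, so its level sets are open and, for such $f$, bounded, and the Whitney decomposition applies. The point $x_Q$ with $\mathbf{I}_\alpha|f|(x_Q)\le t$ lies at distance at least $\mathrm{diam}\,Q$ from $Q$. Hence $3Q$ contains no neighbourhood of $x_Q$, and kernel smoothness cannot be applied to all of $f\chi_{(3Q)^c}$. For $x\in Q$, the part of $(3Q)^c$ where $|y-x_Q|<2|x-x_Q|$ must be bounded directly by $\ell(Q)^{\alpha-n}\int_{B(x,C\ell(Q))}|f|\lesssim\mathbf{M}_\alpha f(z)$ for $z\in Q$. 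Alternatively, split at a dilate of $Q$ large enough to contain $x_Q$ well inside.
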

Consequently, the class $A_{\gamma,\lambda}$ as the sharp condition governing the weighted boundedness of the Riesz potential. Next, we recall the definition of the Hardy-Littlewood maximal operator and its classical boundedness properties in the weighted Lebesgue spaces setting. This property will be crucial in our subsequent proofs, particularly in the approximation arguments used to exclude the Lavrentiev phenomenon (see Section \ref{sec:Lav}). It has been extensively studied in the literature; the interested reader may refer to the pioneering work of Muckenhoupt \cite{Muckenhoupt1972} for further details.
\begin{definition}[Hardy-Littlewood operator]\label{def:M}
Let $f\in L^1_{\mathrm{loc}}(\mathbb R^n)$. The Hardy-Littlewood maximal operator $\mathbf{M}$  is defined by
\begin{align}\label{Mfx}
\mathbf{M} f(x) = \sup_{\varrho>0} \fint_{B_\varrho(x)} |f(y)|dy, \ \mbox{ for } x \in \mathbb{R}^n \mbox{ and } f \in L^1_{\mathrm{loc}}(\mathbb{R}^n).
\end{align}
\end{definition}
\begin{lemma}[Boundedness of $\mathbf{M}$]
\label{Mbound}
Let $\omega \in L^1_{\mathrm{loc}}(\mathbb{R}^n)$ be a non-negative weight function and let $\gamma > 1$. Then, the Hardy--Littlewood maximal operator $\mathbf{M}$ is bounded on the weighted Lebesgue space $L^{\gamma}(\mathbb{R}^n, \omega  dx)$ if and only if $\omega \in A_{\gamma}$. More precisely, there exists a constant $C > 0$ such that 
\begin{align*}
\left( \int_{\mathbb{R}^n} |\mathbf{M}f(x)|^\gamma \omega(x) dx \right)^{\frac{1}{\gamma}} \le C \left( \int_{\mathbb{R}^n} |f(x)|^\gamma \omega(x) dx \right)^{\frac{1}{\gamma}},
\end{align*}
for every $f \in L^{\gamma}(\mathbb{R}^n, \omega dx)$.
\end{lemma}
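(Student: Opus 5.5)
The plan is to prove the two implications separately: necessity by testing the inequality against a well-chosen function, and sufficiency by Marcinkiewicz interpolation between a weak-type bound and the trivial $L^\infty$ bound.

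\emph{Necessity.} Assume $\mathbf{M}$ is bounded on $L^\gamma(\mathbb{R}^n,\omega\,dx)$ with constant $C$, and fix a ball $B$. Set $\sigma:=\omega^{-\frac{1}{\gamma-1}}$. To avoid integrability problems I will work with $\sigma_\varepsilon:=(\omega+\varepsilon)^{-\frac{1}{\gamma-1}}$ and test with $f=\sigma_\varepsilon\chi_B$.
\begin{itemize}
\item Since $f\ge 0$, for every $x\in B$ we have $\mathbf{M}f(x)\ge \fint_B \sigma_\varepsilon\,dy$, because $B\subset B_{2r}(x)$ up to a dimensional factor in the averages.
\item Hence $\big(\fint_B\sigma_\varepsilon\big)^\gamma\,\omega(B)\le C\int_B \sigma_\varepsilon^\gamma\,\omega\,dx\le C\int_B\sigma_\varepsilon\,dx$, using $\sigma_\varepsilon^{\gamma}\omega\le\sigma_\varepsilon$.
\item Rearranging gives $\big(\fint_B\omega\big)\big(\fint_B\sigma_\varepsilon\big)^{\gamma-1}\le C$.
\item Letting $\varepsilon\to0$ by monotone convergence yields \eqref{A-gamma}, uniformly in $B$.
\end{itemize}
Applying the same test with $\mathbf{M}$ replaced by the single averaging operator $f\mapsto (\fint_B|f|)\chi_B$ shows that $A_\gamma$ is also necessary for the \emph{weak} $(\gamma,\gamma)$ bound.

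\emph{Sufficiency, step 1 (weak type).} Assume $\omega\in A_\gamma$. The first goal is the weak $(\gamma,\gamma)$ inequality
\begin{align*}
\omega(\{\mathbf{M}f>t\})\le C t^{-\gamma}\int|f|^\gamma\omega\,dx .
\end{align*}
\begin{itemize}
\item By Hölder's inequality and \eqref{A-gamma}, for any ball $B$,
\begin{align*}
\Big(\fint_B|f|\Big)^\gamma\le \Big(\fint_B|f|^\gamma\omega\Big)\Big(\fint_B\sigma\Big)^{\gamma-1}\le [\omega]_{A_\gamma}\,\frac{1}{\omega(B)}\int_B|f|^\gamma\omega .
\end{align*}
\item By a Vitali covering, the set $\{\mathbf{M}f>t\}$ (intersected with a large compact set) is covered by dilates $5B_j$ of disjoint balls with $\fint_{B_j}|f|>t$.
\item Doubling of $\omega$ gives $\omega(5B_j)\le C\omega(B_j)$. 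Summing over $j$ and using disjointness of the $B_j$ yields the weak bound.
\end{itemize}

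\emph{Sufficiency, step 2 (strong type).} To upgrade to the strong bound I will invoke the self-improving property recalled after Definition \ref{def:Muck}: $\omega\in A_{\gamma-\varepsilon}$ for some small $\varepsilon>0$.
\begin{itemize}
\item Step 1 applied with exponent $\gamma-\varepsilon$ gives weak type $(\gamma-\varepsilon,\gamma-\varepsilon)$ with respect to $\omega\,dx$.
\item $\mathbf{M}$ is trivially bounded on $L^\infty(\omega\,dx)$, since $\omega\,dx$ and $dx$ have the same null sets (because $\omega>0$ a.e., which follows from $\sigma\in L^1_{\mathrm{loc}}$).
\item Marcinkiewicz interpolation between these two bounds (the operator is sublinear) gives boundedness on $L^\gamma(\omega\,dx)$, with constant depending on $n,\gamma,[\omega]_{A_\gamma}$.
\end{itemize}

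\emph{Main obstacle.} The only deep step is the openness $A_\gamma\subset A_{\gamma-\varepsilon}$. It rests on a reverse Hölder inequality (Remark \ref{rmk:Muc-RH}) for the dual weight $\sigma\in A_{\gamma'}$: $\sigma\in RH_{1+\delta}$ gives $\omega\in A_{\gamma-\varepsilon}$ with $\gamma-\varepsilon-1=(\gamma-1)/(1+\delta)$. The reverse Hölder property itself comes from a Calderón–Zygmund decomposition showing that $A_\infty$ weights are quantitatively absolutely continuous with respect to Lebesgue measure.

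Since the paper treats these properties as known, I would cite them (\cite{CF1974,Muckenhoupt1972}). Alternatively, to avoid openness entirely, I would give the dyadic sparse-domination (Lerner) argument. This reduces to a weighted Carleson embedding for $\sigma$, with the averages of $\omega$ and $\sigma$ combined through \eqref{A-gamma}, and then transfers to balls via the one-third trick with finitely many shifted dyadic grids.
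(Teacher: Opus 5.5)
The paper does not prove this lemma at all. It is quoted as a classical fact with a pointer to \cite{Muckenhoupt1972}. Your proposal is the standard Muckenhoupt/Coifman--Fefferman argument, and it is correct:
\begin{itemize}
\item necessity by testing with $\omega^{-1/(\gamma-1)}\chi_B$;
\item the weak-type bound from H\"older, a Vitali covering and doubling;
\item the strong bound from openness $A_\gamma\subset A_{\gamma-\varepsilon}$ via the reverse H\"older inequality for the dual weight, followed by Marcinkiewicz interpolation. The reverse H\"older inequality comes from a Calder\'on--Zygmund argument that does not use any $\mathbf{M}$-bounds, so there is no circularity.
\end{itemize}

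The only thing to tidy is the trivial weight $\omega\equiv 0$ a.e., where the quantity in \eqref{A-gamma} is $0\cdot\infty$. If $\mathbf{M}$ is bounded and $\omega$ vanishes a.e.\ on some ball $B$, testing with $\chi_B$ forces $\omega\equiv 0$, since $\mathbf{M}\chi_B>0$ everywhere. So excluding this case costs nothing, and it justifies your divisions by $\omega(B)$ and by $\fint_B\sigma_\varepsilon$.
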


\subsection{Young functions and the generalized $N$-functions} 

For the convenience of the reader, we recall below some fundamental definitions along with useful properties concerning the Young and generalized $N$-functions (``Nice'' functions) required for our analysis. We direct the reader to \cite{HH2019} for a more comprehensive treatment and detailed proofs.

\begin{definition}[Orlicz function]
\label{def:Orlicz}
Let $\psi: [0,\infty) \to [0,\infty)$ be a non-decreasing convex function satisfying $\psi(0)=0$. We say that $\psi$ is an Orlicz function if
\begin{align*}
\lim_{h \nearrow \infty} \frac{\psi(h)}{h} = \infty, \ \mbox{ and } \ \lim_{h \searrow 0^+} \frac{\psi(h)}{h} = 0.
\end{align*}
Furthermore, concerning the growth evolution of $\psi$, we introduce the following conditions:
\begin{itemize}
    \item[(i)] $\psi$ is said to satisfy the $\Delta_2$-condition, denoted by $\psi \in \Delta_2$, if there exists a constant $c_1 > 1$ such that $\psi(2h) \le c_1 \psi(h)$ for all $h \ge 0$.
    \item[(ii)] $\psi$ is said to satisfy the $\nabla_2$-condition, denoted by $\psi \in \nabla_2$, if there exists a constant $c_2 > 1$ such that $\psi(h) \le \frac{1}{2c_2} \psi(c_2h)$ for all $h \ge 0$.
\end{itemize}
We shall write $\psi \in \Delta_2 \cap \nabla_2$ if both conditions are satisfied. 
\end{definition}
The following result details the construction of the Orlicz conjugate, which is helpful in our later treatments, and its proof can be found in~\cite{DHHR2011}.
\begin{lemma} 
\label{lem-Young} 
Let $\psi$ be an Orlicz function as in Definition \ref{def:Orlicz}. The conjugate function of $\psi$, denoted by $\psi^*$, is defined as
\begin{align*}
\psi^*(t):= \sup_{h \ge 0} \big[th - \psi(h)\big], \quad \text{for} \ t \ge 0.
\end{align*}
Then, $\psi^*$ is also an Orlicz function and $(\psi^*)^* = \psi$. Moreover, if $\psi \in \Delta_2 \cap \nabla_2$, then 
\begin{align}\label{G-star}
\psi^*\left(\frac{\psi(h)}{h}\right) \le \psi(h) \le \psi^*\left(\frac{2\psi(h)}{h}\right), \quad \text{for all } h > 0.
\end{align}
In particular, for every $\varepsilon \in (0,1)$, there exists a positive constant $C_{\varepsilon}$ such that the following Young-type inequalities
\begin{align}\label{Young-ineq}
ht \le \varepsilon \psi(h) + C_{\varepsilon} \psi^*(t) \quad \text{and} \quad h \frac{\psi(t)}{t} \le \varepsilon \psi(h) + C_{\varepsilon} \psi(t)
\end{align}
hold for all $h \ge 0$ and $t > 0$.
\end{lemma}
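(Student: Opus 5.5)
The plan is to prove the four assertions of Lemma \ref{lem-Young} in order: that $\psi^*$ is an Orlicz function, the involution $(\psi^*)^*=\psi$, the two-sided bound \eqref{G-star}, and the Young-type inequalities \eqref{Young-ineq}. Each follows from the definition of $\psi^*$ together with convexity of $\psi$, and the $\Delta_2\cap\nabla_2$ hypothesis enters only when converting $\psi^*(t/\varepsilon)$ into $C_\varepsilon\psi^*(t)$.

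\textbf{Orlicz properties and involution.} As a supremum of the affine functions $t\mapsto th-\psi(h)$ with $h\ge0$, $\psi^*$ is convex and nondecreasing. Taking $h=0$ gives $\psi^*\ge 0$, and $\psi^*(0)=\sup_h(-\psi(h))=0$. For superlinearity, fix any $h$; then $\psi^*(t)/t\ge h-\psi(h)/t\to h$ as $t\to\infty$, and $h$ is arbitrary. For the behaviour at $0$, the condition $\psi(h)/h\to\infty$ shows that for small $t$ the supremum only involves $h\le h_t$, where $h_t\to0$ as $t\to0$. Indeed, $th-\psi(h)<0$ once $\psi(h)/h>t$, and since $\psi(h)/h$ is nondecreasing and $\psi(h)/h\to0$ as $h\to0^+$, the threshold $h_t$ tends to $0$. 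Hence $\psi^*(t)\le t h_t=o(t)$.

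For $(\psi^*)^*=\psi$, I would extend $\psi$ evenly to $\mathbb{R}$, so that it becomes a finite convex and hence continuous function. The Fenchel--Moreau theorem then identifies the biconjugate with $\psi$. Because $\psi$ is nondecreasing on $[0,\infty)$, the suprema over $\mathbb{R}$ reduce to suprema over $[0,\infty)$, which matches the one-sided definition used here.

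\textbf{The bound \eqref{G-star}.} Convexity and $\psi(0)=0$ make $s\mapsto\psi(s)/s$ nondecreasing. Fix $h>0$ and write
\[
\psi^*\bigl(\psi(h)/h\bigr)=\sup_{s\ge0}\bigl[s\psi(h)/h-\psi(s)\bigr].
\]
If $s\le h$, the bracket is at most $s\psi(h)/h\le\psi(h)$. If $s\ge h$, then $\psi(s)\ge s\psi(h)/h$, so the bracket is at most $0$. This gives the left inequality. The right inequality follows by testing the supremum defining $\psi^*\bigl(2\psi(h)/h\bigr)$ at $s=h$, which yields $2\psi(h)-\psi(h)=\psi(h)$.

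\textbf{Young-type inequalities.} Fenchel's inequality $ab\le\psi(a)+\psi^*(b)$ is immediate from the definition of $\psi^*$. Write $ht=(\varepsilon h)(t/\varepsilon)$. Convexity gives $\psi(\varepsilon h)\le\varepsilon\psi(h)$, hence
\[
ht\le \varepsilon\psi(h)+\psi^*(t/\varepsilon).
\]
It remains to show that $\psi^*\in\Delta_2$, which gives $\psi^*(t/\varepsilon)\le C_\varepsilon\psi^*(t)$ by iterating the doubling bound about $\log_2(1/\varepsilon)$ times.

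I expect this step to be the main obstacle, and I would derive it from $\psi\in\nabla_2$. Starting from $\psi(h)\le\frac{1}{2c_2}\psi(c_2h)$, substitute $h=s/c_2$ inside the supremum:
\[
\psi^*(2t)=\sup_s\bigl[2ts-\psi(s)\bigr]\le\sup_s\Bigl[2ts-2c_2\psi(s/c_2)\Bigr]=2c_2\,\psi^*(t).
\]
This is the desired $\Delta_2$ bound for $\psi^*$ with constant $2c_2$. The $\Delta_2$ property of $\psi$ is presumably needed only to make the symmetric statements in the paper's framework legitimate, and I would check whether it is genuinely used anywhere.

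For the second inequality in \eqref{Young-ineq}, apply the first one with $t$ replaced by $\psi(t)/t$. Then bound $\psi^*\bigl(\psi(t)/t\bigr)\le\psi(t)$ using the left half of \eqref{G-star}.
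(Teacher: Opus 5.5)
There is one genuine gap; everything else in your proof is correct. Your proof of \eqref{G-star} (which in fact holds for every Orlicz function), the bound $\psi^*(2t)\le 2c_2\psi^*(t)$ derived from $\nabla_2$, both Young-type inequalities, and the Fenchel--Moreau argument for $(\psi^*)^*=\psi$ all go through. You are also right that the $\Delta_2$ condition on $\psi$ is never used for them. The paper itself gives no proof and only refers to \cite{DHHR2011}.

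The gap is the claim that the threshold $h_t$ (beyond which $\psi(h)/h>t$) tends to $0$ as $t\to0^+$, which you use to get $\psi^*(t)/t\to0$. As $t\downarrow0$, $h_t$ decreases to $\sup\{h\ge0:\psi(h)=0\}$. So what this step needs is $\psi(h)>0$ for every $h>0$; the hypothesis $\psi(h)/h\to0$ as $h\to0^+$ plays no role. That positivity is not part of Definition \ref{def:Orlicz}, and without it the assertion is false. Take $\psi(h)=(h-1)_+^2$: it is convex, nondecreasing, vanishes at $0$ and satisfies both limit conditions, yet $\psi^*(t)=t+t^2/4$, so $\psi^*(t)/t\to1$. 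More generally, $\psi(h_0)=0$ with $h_0>0$ gives $\psi^*(t)\ge th_0$ for every $t$.

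So the first assertion cannot be proved from the hypotheses as stated; you must assume $\psi>0$ on $(0,\infty)$. This is built into the notion of $N$-function in \cite{DHHR2011}. With that assumption your argument closes: for $\delta>0$ set $c_\delta:=\psi(\delta)/\delta>0$. If $0<t<c_\delta$, monotonicity of $\psi(h)/h$ gives $th-\psi(h)<0$ for all $h\ge\delta$, hence $\psi^*(t)\le t\delta$.

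Note that $\Delta_2$ already forces positivity: if $\psi(h_0)=0$, then $\psi(2^kh_0)\le c_1^k\psi(h_0)=0$ for all $k$, contradicting superlinearity. So the issue disappears under the hypotheses of the second half of the lemma, but the claim that $\psi^*$ is Orlicz is made before $\Delta_2\cap\nabla_2$ is assumed.

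Finally, since Orlicz functions take values in $[0,\infty)$, you should state explicitly that $\psi^*$ is finite. Your threshold argument gives $\psi^*(t)\le th_t$ with $h_t<\infty$ by superlinearity of $\psi$, and this works for every $t>0$, not only small ones.
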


In the study of multi-phase problems, the natural functional framework is provided by generalized Orlicz spaces. Since the energy density $\mathcal{H}(x,t)$ defined in \eqref{def-Hxt} depends on both the spatial variable and the gradient, it is therefore convenient to work with the class of generalized $N$-functions. To this end, let us recall the following standard definition.
\begin{definition}[Generalized $N$-function]
\label{def:Gen-N}
A function $\phi: \mathbb{R}^n \times [0,\infty) \to [0,\infty)$ is called a generalized $N$-function if it satisfies the following conditions:
\begin{itemize}
    \item[(i)] For every $t \ge 0$, the mapping $x \mapsto \phi(x,t)$ is measurable.
    \item[(ii)] For almost every $x \in \mathbb{R}^n$, the mapping $t \mapsto \phi(x,t)$ is convex, continuous, and satisfies the limits:
    \begin{align*}
    \lim_{t \to 0^+} \frac{\phi(x,t)}{t} = 0 \quad \text{and} \quad \lim_{t \to \infty} \frac{\phi(x,t)}{t} = \infty.
    \end{align*}
Moreover, for almost every $x$, $\phi(x, \cdot)$ possesses a right-continuous derivative $\phi'(x, \cdot)$ such that $\phi(x, t) = \int_0^t \phi'(x, r) \, dr$.
\end{itemize}
\end{definition}
One also observes that the multi-phase density $\mathcal{H}(x,t)$ introduced in \eqref{eq-main} is a prototype example of a generalized $N$-function whenever the modulating coefficients $a_i(\cdot)$ are non-negative and locally integrable.

In the setting of multi-phase problems, the appearance of the conjugate function $\phi^*$ of a generalized $N$-function is indispensable. In particular, $\phi^*$ arises when dealing with product terms under non-homogeneous growth conditions via Young-type inequalities. It plays a crucial role in absorbing mixed perturbation terms during the derivation of Caccioppoli-type inequalities, a critical step for controlling the oscillation of the gradient. Moreover, the duality relation between $\phi$ and $\phi^*$ is often the primary strategy in the analysis of multi-phase problems, providing several key inequalities employed in regularity estimates.
\begin{definition}[Conjugate $N$-function]
Let $\phi$ be a generalized $N$-function. We denote by $(\phi')^{-1}(x,\cdot)$ the right-continuous inverse of $t \mapsto \phi'(x,t)$. The conjugate function $\phi^*$ associated with $\phi$ is defined by
\begin{align*}
\phi^*(x,t) = \int_0^t (\phi')^{-1}(x,r) dr, \quad x \in \mathbb{R}^n \ \text{ and } t \ge 0.
\end{align*}
\end{definition}
An equivalent characterization is provided by the generalized Legendre-Fenchel transform, namely
\begin{align*}
\phi^*(x,t) = \sup_{r \ge 0} \big(rt - \phi(x,r)\big), \quad \text{for } x \in \mathbb{R}^n \text{ and } t \ge 0.
\end{align*}
The conjugation operation preserves the class of generalized $N$-functions. Moreover, from these definitions, it readily follows that
\begin{align*}
(\phi^*)' = (\phi')^{-1} \quad \text{and} \quad (\phi^*)^* = \phi.
\end{align*}
\begin{remark}
\label{rem:MO_H}
It is worth noting that the multi-phase energy density considered in this paper provides a prototypical example of a generalized $N$-function. Indeed, the function $\mathcal{H}(x,t)$ introduced in \eqref{def-Hxt} satisfies all conditions in the sense of Definition \ref{def:Gen-N}. In the literature, functions of this type are often referred to as \emph{Musielak–Orlicz functions}. For instance, in a typical triple-phase setting, $\mathcal{H}_3(x,t)$ often takes the form:
\begin{align*}
\mathcal{H}_3(x,t) = t^p + a(x)t^q + b(x)t^s, \quad 1 < p \le q \le s,
\end{align*}
where $a(\cdot), b(\cdot) \ge 0$ are bounded measurable weight functions. The presence of several modulating coefficients, which can be generalized to a finite sequence $\{a_i(\cdot)\}$, allows $\mathcal{H}$ to describe multiple competing growth regimes, thereby seamlessly extending the classical double-phase framework to a fully multi-phase setting.
\end{remark}

\subsection{Function spaces} 

To investigate the minimizers of a multi-phase functional $\mathbb{F}$ defined in \eqref{eq-main}, one requires a more general function space setting. In this regard, the corresponding Musielak-Orlicz space $L^{\mathcal{H}}(\Omega)$ and Sobolev space $W^{1,\mathcal{H}}(\Omega)$ provide the natural energy spaces for the multi-phase functional $\mathbb{F}$. 

Let $\phi: \Omega \times [0,\infty) \to [0,\infty)$ be a Musielak--Orlicz function. The Musielak--Orlicz class associated with $\phi$, denoted by $\mathcal{O}^{\phi}(\Omega)$, is defined as the set of all measurable functions $\mathsf{f}: \Omega \to \mathbb{R}$ whose modular functional is finite. More precisely,
\begin{align*}
\mathcal{O}^{\phi}(\Omega) := \left\{\mathsf{f}: \Omega \to \mathbb{R} \ \text{measurable}: \ \rho_\phi(\mathsf{f}) < \infty \right\},
\end{align*}
where the modular $\rho_\phi(\mathsf{f})$ is given by
$$
\rho_\phi(\mathsf{f}) := \int_\Omega \phi(x,|\mathsf{f}(x)|)  dx.
$$
The corresponding \emph{Musielak-Orlicz space} $L^\phi(\Omega)$ is defined as the linear hull of $\mathcal{O}^{\phi}(\Omega)$. Moreover, this space is also a Banach space when endowed with the following Luxemburg norm
\begin{align*}%\label{eq:Luxnorm}
\|\mathsf{f}\|_{L^\phi(\Omega)} := \inf \left\{ s \in (0,\infty) : \rho_\phi\left(\frac{\mathsf{f}}{s}\right) \le 1 \right\}.
\end{align*}

In this paper, associated with the Musielak-Orlicz function $\phi$, we also focus on the \emph{Musielak-Orlicz-Sobolev space} $W^{1,\phi}(\Omega)$, which provides the natural functional framework for the variational problems considered in this paper. This space is the set of all functions $\mathsf{f} \in L^\phi(\Omega)$ whose distributional gradients belong to $L^\phi(\Omega)$, namely,
\begin{align*}
W^{1,\phi}(\Omega) := \left\{ \mathsf{f} \in L^\phi(\Omega) : |\nabla \mathsf{f}| \in L^\phi(\Omega) \right\}.
\end{align*}
The space $W^{1,\phi}(\Omega)$ becomes a Banach space when equipped with the norm
\begin{align*}
\|\mathsf{f}\|_{W^{1,\phi}(\Omega)} = \|\mathsf{f}\|_{L^{\phi}(\Omega)} + \|\nabla \mathsf{f}\|_{L^{\phi}(\Omega)},
\end{align*}
where, for the sake of brevity, we use the abbreviation  $\|\nabla \mathsf{f}\|_{L^{\phi}(\Omega)} := \| |\nabla \mathsf{f}| \|_{L^{\phi}(\Omega)}$. Furthermore, we denote by $W_0^{1,\phi}(\Omega)$ the closure of $C_0^{\infty}(\Omega)$ with respect to the above norm.

The following lemma provides the generalized version of H\"older's inequality in the Musielak-Orlicz setting and will be repeatedly used throughout the paper. We refer to \cite{HH2019} for textbook treatments.
\begin{lemma} \label{lem-Hold} 
Let $\phi$ be a Musielak–Orlicz function and $\phi^*$ denote its conjugate. If $\mathsf{f} \in L^{\phi}(\Omega)$ and $\mathsf{g} \in L^{\phi^*}(\Omega)$, then $\mathsf{f}\mathsf{g} \in L^1(\Omega)$ and there holds
\begin{align}\label{Hold-ineq}
\int_{\Omega} |\mathsf{f}(x)\mathsf{g}(x)|dx \le C \|\mathsf{f}\|_{L^{\phi}(\Omega)} \|\mathsf{g}\|_{L^{\phi^*}(\Omega)}.
\end{align}
\end{lemma}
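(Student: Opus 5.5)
This is the generalized H\"older inequality in Musielak--Orlicz spaces, and I would derive it from the pointwise Young inequality built into the definition of the conjugate, followed by a normalization by Luxemburg norms. Recall that $\phi^*(x,t)=\sup_{r\ge0}(rt-\phi(x,r))$. For every $x$ and all $r,t\ge0$ this gives Young's inequality
\begin{align*}
rt \le \phi(x,r)+\phi^*(x,t).
\end{align*}

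The trivial cases come first. If $\|\mathsf{f}\|_{L^\phi(\Omega)}=0$, then $\rho_\phi(\mathsf{f}/s)\le1$ for every $s>0$. Letting $s\to0$, the superlinear growth $\phi(x,t)/t\to\infty$ forces $\mathsf{f}=0$ a.e., so both sides of \eqref{Hold-ineq} vanish. The same reasoning applies to $\mathsf{g}$, since $\phi^*$ is again a generalized $N$-function.

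Now assume both norms are positive. Fix $s>\|\mathsf{f}\|_{L^\phi(\Omega)}$ and $\tau>\|\mathsf{g}\|_{L^{\phi^*}(\Omega)}$. Because $\phi(x,\cdot)$ is convex with $\phi(x,0)=0$, the map $s\mapsto\rho_\phi(\mathsf{f}/s)$ is non-increasing. Hence the definition of the infimum gives $\rho_\phi(\mathsf{f}/s)\le1$, and likewise $\rho_{\phi^*}(\mathsf{g}/\tau)\le1$. Apply Young's inequality pointwise with $r=|\mathsf{f}(x)|/s$ and $t=|\mathsf{g}(x)|/\tau$, then integrate over $\Omega$:
\begin{align*}
\frac{1}{s\tau}\int_\Omega |\mathsf{f}\mathsf{g}|\,dx \le \rho_\phi\!\left(\frac{\mathsf{f}}{s}\right)+\rho_{\phi^*}\!\left(\frac{\mathsf{g}}{\tau}\right)\le 2.
\end{align*}
In particular $\mathsf{f}\mathsf{g}\in L^1(\Omega)$. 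Letting $s\downarrow\|\mathsf{f}\|_{L^\phi(\Omega)}$ and $\tau\downarrow\|\mathsf{g}\|_{L^{\phi^*}(\Omega)}$ then yields \eqref{Hold-ineq} with $C=2$.

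The only delicate points are measure-theoretic. First, $x\mapsto\phi^*(x,|\mathsf{g}(x)|)$ must be measurable. This follows because the supremum defining $\phi^*$ may be restricted to rational $r$, using continuity of $\phi(x,\cdot)$. Second, the monotonicity of the modular is needed to pass from the infimum to the inequality $\rho_\phi(\mathsf{f}/s)\le1$. Neither point is a real obstacle, and the argument uses nothing specific to $\mathcal{H}$ or to the Muckenhoupt weights.
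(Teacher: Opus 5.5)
Your proof is correct and follows the route the paper indicates: the paper gives no proof of its own and defers to the literature, remarking only that $C=2$ follows from the pointwise Young inequality $rt\le\phi(x,r)+\phi^*(x,t)$ combined with the normalization in the Luxemburg norm, which is exactly your argument. Your separate treatment of the zero-norm case is harmless but unnecessary, since the main estimate $\int_\Omega|\mathsf{f}\mathsf{g}|\,dx\le 2s\tau$ holds for every $s>\|\mathsf{f}\|_{L^{\phi}(\Omega)}$, including arbitrarily small $s>0$ when that norm vanishes.
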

It is worth emphasizing that the constant $C$ in \eqref{Hold-ineq} is universal and independent of $\mathsf{f}, \mathsf{g}, \phi$ and $\Omega$. Precisely, one may take $C=2$, which follows directly from the generalized Young inequality and the definition of the Luxemburg norm. 

\subsection{Scaling properties and integral averages}

One of the primary technical difficulties in dealing with multi-phase variational problems stems from the spatial dependence of the modulating coefficients $a_i(\cdot)$. To obtain the localized energy estimates in the subsequent sections, we need to introduce the integral averages of our multi-phase energy, which allow us to separate the spatial oscillations. Such averaged functionals will be repeatedly employed in our localization and scaling arguments. The following fundamental lemma provides the scaling properties of the multi-phase density $\mathcal{H}(x,t)$ and its derivative with respect to $t$, briefly denoted by $\mathcal{H}'(x,t)$. The proof is straightforward and follows immediately from the explicit formula of $\mathcal{H}(x,t)$ and its partial derivative $\mathcal{H}'(x,t)$. Therefore, we shall omit the details here.
\begin{lemma}
\label{lem:scaling_H}
For all $x \in \mathbb{R}^n$ and $\lambda, t \ge 0$, the following inequalities hold:
\begin{itemize}
\item[(i)] $\min\{\lambda^p, \lambda^{p_N}\} \mathcal{H}(x,t) \le \mathcal{H}(x, \lambda t) \le \max\{\lambda^p, \lambda^{p_N}\} \mathcal{H}(x,t)$.
\item[(ii)] $\min\{\lambda^{p-1}, \lambda^{p_N-1}\} \mathcal{H}^{\prime}(x,t) \le \mathcal{H}^{\prime}(x, \lambda t) \le \max\{\lambda^{p-1}, \lambda^{p_N-1}\} \mathcal{H}^{\prime}(x,t)$.
\end{itemize}
\end{lemma}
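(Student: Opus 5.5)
The plan is to argue term by term, since $\mathcal{H}(x,t)=t^p+\sum_{i=1}^N a_i(x)t^{p_i}$ is a finite sum of nonnegative multiples of power functions. Fix $x$ and $\lambda,t\ge 0$. For each exponent $q\in\{p,p_1,\dots,p_N\}$ we have $p\le q\le p_N$. If $\lambda\ge 1$, monotonicity of $q\mapsto\lambda^q$ gives $\lambda^p\le\lambda^q\le\lambda^{p_N}$. If $0\le\lambda<1$, the monotonicity is reversed and $\lambda^{p_N}\le\lambda^q\le\lambda^p$. In both cases
\begin{align*}
\min\{\lambda^p,\lambda^{p_N}\}\le \lambda^q\le \max\{\lambda^p,\lambda^{p_N}\}.
\end{align*}
For $\lambda=0$ this holds trivially, since all quantities vanish (recall $p>1$).

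For (i), write $\mathcal{H}(x,\lambda t)=\lambda^p t^p+\sum_i a_i(x)\lambda^{p_i}t^{p_i}$. Each summand has a nonnegative coefficient, because $a_i\ge 0$ and $t\ge0$. Multiplying the elementary bound for $\lambda^q$ by that coefficient and summing gives the two-sided estimate with the common factors $\min\{\lambda^p,\lambda^{p_N}\}$ and $\max\{\lambda^p,\lambda^{p_N}\}$, which is exactly (i).

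For (ii), differentiate in $t$: $\mathcal{H}'(x,t)=p t^{p-1}+\sum_i p_i a_i(x)t^{p_i-1}$. Then $\mathcal{H}'(x,\lambda t)=p\lambda^{p-1}t^{p-1}+\sum_i p_i a_i(x)\lambda^{p_i-1}t^{p_i-1}$. The same monotonicity argument applies to the exponents $q-1\in[p-1,p_N-1]$, and all of these are positive. The nonnegative coefficients $p\,t^{p-1}$ and $p_i a_i(x)t^{p_i-1}$ again allow summation, which yields (ii).

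There is no real obstacle here; the only point needing care is the case split $\lambda\ge1$ versus $\lambda<1$, which decides whether the min or the max is attained at $p$ or at $p_N$. The inequality is only used where $a_i(x)$ is finite, i.e.\ almost everywhere.
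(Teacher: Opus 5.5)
Your proof is correct and is exactly the argument the paper has in mind; the paper omits the details, saying only that the lemma follows immediately from the explicit formulas for $\mathcal{H}$ and $\mathcal{H}'$. Your argument bounds each power termwise by $\min\{\lambda^p,\lambda^{p_N}\}\le\lambda^{q}\le\max\{\lambda^p,\lambda^{p_N}\}$ for $q\in[p,p_N]$ (and similarly for the exponents $q-1$), then multiplies by the nonnegative coefficients and sums.
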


Next, for each ball $B \subset \Omega$, we introduce the Young function $\Phi_B: [0,\infty) \to [0,\infty)$ defined by
\begin{align} \label{def-PHI}
\Phi_B(t) := t^p + \sum_{i=1}^{N} (a_i)_B t^{p_i},
\end{align}
where $(a_i)_B$ denotes the integral average of the modulating coefficient $a_i$ over $B$. It is worth mentioning that $\Phi_B$ can be viewed as the average of the multi-phase density $\mathcal{H}$ over $B$, that is,
\begin{align*}
\Phi_B(t) = \fint_B \mathcal{H}(x,t) dx, \quad \text{for } t \in [0,\infty).
\end{align*}
Further, since $1 < p \le p_1 \le \dots \le p_N$, the mapping $t \mapsto \Phi_B(t)$ is strictly increasing and continuous, and therefore it admits a continuous inverse function $\Phi_B^{-1}: [0,\infty) \to [0,\infty)$. In the next lemmas, we collect several elementary properties of $\Phi_B$, its conjugate, and inverse that will be needed later. Although the proofs of these properties are straightforward, we include the proofs for the reader's convenience. 
\begin{lemma}
\label{lem:scaling_phi}
For each $B \subset \mathbb{R}^n$ and $\lambda, t \ge 0$, the following inequalities hold:
\begin{itemize}
\item[(i)] $\min\{\lambda^p, \lambda^{p_N}\} \Phi_B(t) \le \Phi_B(\lambda t) \le \max\{\lambda^p, \lambda^{p_N}\} \Phi_B(t)$.
\item[(ii)] $\min\{\lambda^{\frac{p}{p-1}}, \lambda^{\frac{p_N}{p_N-1}}\} \Phi_B^*(t) \le \Phi_B^*(\lambda t) \le \max\{\lambda^{\frac{p}{p-1}}, \lambda^{\frac{p_N}{p_N-1}}\} \Phi_B^*(t)$.
\end{itemize}
\end{lemma}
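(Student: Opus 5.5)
Part (i) is a termwise comparison of monomials, and part (ii) follows from (i) by conjugation.

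\emph{Part (i).} Since $(a_i)_B \ge 0$, each term of $\Phi_B(\lambda t) = \lambda^p t^p + \sum_{i=1}^N (a_i)_B \lambda^{p_i} t^{p_i}$ is a nonnegative monomial. It therefore suffices to note that for every exponent $q \in [p,p_N]$ we have
\begin{align*}
\min\{\lambda^p,\lambda^{p_N}\} \le \lambda^q \le \max\{\lambda^p,\lambda^{p_N}\}.
\end{align*}
This holds because $q \mapsto \lambda^q$ is monotone: nondecreasing if $\lambda \ge 1$ and nonincreasing if $\lambda \le 1$. The case $\lambda = 0$ is trivial. Multiplying these bounds by $t^p$ and by $(a_i)_B t^{p_i}$ and summing gives (i).

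\emph{Part (ii).} Write $\Phi_B^*(\lambda t) = \sup_{h\ge0}[\lambda t h - \Phi_B(h)]$ and handle $\lambda \ge 1$ and $0<\lambda<1$ separately, using the substitution $h = \mu s$ with $\mu>0$ chosen to balance the scalings.

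For the upper bound with $\lambda \ge 1$, take $\mu = \lambda^{1/(p-1)} \ge 1$. By (i), $\Phi_B(\mu s) \ge \mu^p \Phi_B(s)$, hence
\begin{align*}
\lambda t \mu s - \Phi_B(\mu s) \le \mu^p \big(ts - \Phi_B(s)\big),
\end{align*}
since $\lambda\mu = \mu^p$. Taking the supremum over $s$ gives $\Phi_B^*(\lambda t) \le \lambda^{p/(p-1)} \Phi_B^*(t)$, and $\lambda^{p/(p-1)}$ is the maximum when $\lambda\ge1$ because $p/(p-1) \ge p_N/(p_N-1)$.

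For the upper bound with $\lambda < 1$, take $\mu = \lambda^{1/(p_N-1)} < 1$. Now (i) gives $\Phi_B(\mu s) \ge \mu^{p_N}\Phi_B(s)$, so $\Phi_B^*(\lambda t) \le \lambda^{p_N/(p_N-1)}\Phi_B^*(t)$, which is the maximum for $\lambda<1$.

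The lower bounds follow by applying these upper bounds with $1/\lambda$ in place of $\lambda$ and $\lambda t$ in place of $t$. This yields $\Phi_B^*(t) \le \max\{\lambda^{-p/(p-1)},\lambda^{-p_N/(p_N-1)}\}\Phi_B^*(\lambda t)$, which rearranges to the stated minimum.

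The only delicate point is tracking which exponent gives the max or min in each regime, since $r\mapsto r/(r-1)$ is decreasing. Everything else is elementary.
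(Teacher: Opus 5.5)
Your proof is correct and follows the same route as the paper: (i) by termwise comparison of the monomials using $p\le p_i\le p_N$, and (ii) deduced from (i) by conjugation. The paper only appeals to ``standard duality of Young conjugates'' for (ii). Your rescaling $h=\mu s$ with $\mu=\lambda^{1/(p-1)}$ or $\lambda^{1/(p_N-1)}$ in the Legendre transform, followed by the $1/\lambda$ inversion for the lower bound, supplies exactly the computation it omits (the case $\lambda=0$ is trivial since $\Phi_B^*(0)=0$).
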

\begin{proof}
The proof of $(i)$ is a direct consequence of the definition of $\Phi_B$ and follows from the condition $p \le p_i\le p_N$, for $i\in I_N$. Specifically, for any $\lambda > 0$, one has $(\lambda t)^p = \lambda^p t^p$ and $(\lambda t)^{p_i} = \lambda^{p_i} t^{p_i}$ hold for each term. By taking the minimum and maximum of the exponents $\{p, p_1, \dots, p_N\}$, we obtain the scaling bounds for $\Phi_B$. Next, the proof of $(ii)$ follows immediately from $(i)$ and standard duality of Young conjugates. Since the growth of $\Phi_B$ is bounded by the exponents $p$ and $p_N$, the conjugate function $\Phi_B^*$ yields the corresponding growth with H\"older conjugate exponents $p' = \frac{p}{p-1}$ and $p_N' = \frac{p_N}{p_N-1}$, respectively. Thus, the desired estimates for $(ii)$ are a direct consequence of the scaling properties of conjugate Young functions. 
\end{proof}
\begin{lemma} 
\label{lem:scale_comparison}
Let $B \subset \Omega$ be a ball and $\Phi_B$ be the Young function defined in \eqref{def-PHI}. Then, for every constant $C_1 \ge 1$, there exists a constant $C_2=C_2(C_1,p,p_N) \ge 1$ such that for all $t_1, t_2 > 0$, the following implications hold:
\begin{align}
\label{eq:hyp_eng_1}
\Phi_B(t_1) \le C_1 \Phi_B(t_2) \implies t_1 \le C_2 t_2.
\end{align}
and the reverse implication holds, namely, for any constant $C_2 \ge 1$, there exists a constant $C_1 = C_1(C_2, p_N) \ge 1$ such that for all $t_1, t_2 > 0$,
\begin{align} 
\label{eq:hyp_eng_2}
t_1 \le C_2 t_2 \implies \Phi_B(t_1) \le C_1 \Phi_B(t_2).
\end{align}
Furthermore, $\Phi_B$ is strictly increasing, and its inverse  $\Phi_B^{-1}$ satisfies a similar scaling property: for any $C_3 \ge 1$, there exists $C_4 = C_4(C_3, p, p_N) \ge 1$ such that
\begin{align}
\Phi_B^{-1}(s_1) \le C_3 \Phi_B^{-1}(s_2) \implies s_1 \le C_4 s_2,
\end{align}
and vice versa. All the scaling constants $C_j$ are independent of $t_1, t_2, s_1, s_2$ and the ball $B$.
\end{lemma}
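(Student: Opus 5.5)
The plan is to derive everything from the two-sided power scaling of $\Phi_B$ in Lemma \ref{lem:scaling_phi}(i). Write $\mu(\lambda):=\min\{\lambda^p,\lambda^{p_N}\}$ and $M(\lambda):=\max\{\lambda^p,\lambda^{p_N}\}$. Both are strictly increasing bijections of $[0,\infty)$, and they do not depend on $B$, since the averages $(a_i)_B$ enter only through $\Phi_B$ itself. The argument therefore never uses the values of $(a_i)_B$. It uses only that $\Phi_B$ is a positive combination of powers with exponents in $[p,p_N]$, which immediately gives the claimed independence of the constants from $B$.

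\textbf{Step 1: the implication \eqref{eq:hyp_eng_1}.} I argue by contraposition. Suppose $t_1> C_2t_2$ and set $\lambda:=t_1/t_2>C_2\ge1$. Lemma \ref{lem:scaling_phi}(i) gives
\begin{align*}
\Phi_B(t_1)=\Phi_B(\lambda t_2)\ge \mu(\lambda)\Phi_B(t_2)=\lambda^p\Phi_B(t_2)>C_2^p\,\Phi_B(t_2).
\end{align*}
Choosing $C_2:=C_1^{1/p}$ makes this contradict $\Phi_B(t_1)\le C_1\Phi_B(t_2)$. Here I use that $\Phi_B(t_2)>0$ for $t_2>0$, which holds because $\Phi_B(t)\ge t^p$.

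\textbf{Step 2: the implication \eqref{eq:hyp_eng_2}.} I use monotonicity followed by the upper scaling bound. If $t_1\le C_2t_2$, then
\begin{align*}
\Phi_B(t_1)\le\Phi_B(C_2t_2)\le M(C_2)\Phi_B(t_2)=C_2^{p_N}\Phi_B(t_2),
\end{align*}
so $C_1:=C_2^{p_N}$ works.

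\textbf{Step 3: the inverse function.} Strict monotonicity and continuity of $\Phi_B$ follow from the nonnegativity of the $(a_i)_B$ and the strict increase of $t^p$. Hence $\Phi_B^{-1}$ exists, is increasing, and is continuous. Set $t_j:=\Phi_B^{-1}(s_j)$. The hypothesis $\Phi_B^{-1}(s_1)\le C_3\Phi_B^{-1}(s_2)$ then reads $t_1\le C_3t_2$. By Step 2 this gives $s_1=\Phi_B(t_1)\le C_3^{p_N}s_2$, so $C_4:=C_3^{p_N}$ works.

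For the converse direction, suppose $s_1\le C_4s_2$. Step 1 with $C_1=C_4$ gives $\Phi_B^{-1}(s_1)\le C_4^{1/p}\Phi_B^{-1}(s_2)$. The degenerate cases $s_j=0$ or $t_j=0$ are trivial.

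There is no real obstacle in this argument. The only points needing care are the direction of the contraposition in Step 1, which relies on $\lambda\ge1$ so that the minimum in Lemma \ref{lem:scaling_phi}(i) equals $\lambda^p$, and keeping the dependence of each constant honest ($C_2$ depends on $C_1$ and $p$, while $C_1$ and $C_4$ depend on $p_N$ and, for the converse, on $p$).
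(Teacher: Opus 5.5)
Your proof is correct and follows essentially the same route as the paper. Both arguments get the forward implication from the lower bound $\Phi_B(\lambda t)\ge \min\{\lambda^p,\lambda^{p_N}\}\Phi_B(t)$ of Lemma \ref{lem:scaling_phi}(i), with $C_2=C_1^{1/p}$; you argue by contraposition, while the paper divides by $\Phi_B(t_2)>0$. The reverse implication comes from the upper bound in both, and your reduction of the inverse statements to Steps 1--2 via $t_j=\Phi_B^{-1}(s_j)$ simply spells out what the paper calls ``a similar argument.''
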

\begin{proof}
Clearly, if $t_1 \le C_2 t_2$, then the scaling properties from Lemma \ref{lem:scaling_H} imply that $\Phi_B(t_1) \le \max\left\{C_2^p, C_2^{p_N}\right\} \Phi_B(t_2)$. Therefore, it suffices to establish the forward implication in \eqref{eq:hyp_eng_1}. To this end, let us define the scaling factor $k:= \frac{t_1}{t_2} > 0$, which directly yields the relation $t_1 = k t_2$. By the scaling property of $\Phi_B$ derived in Lemma \ref{lem:scaling_phi}, we readily obtain the following estimate:
$$
\Phi_B(t_1) = \fint_{B} \mathcal{H}(x, t_1) dx \ge \min\left\{k^p, k^{p_N}\right\} \fint_{B} \mathcal{H}(x, t_2) dx = \min\left\{k^p, k^{p_N}\right\} \Phi_B(t_2).
$$
If given $\Phi_B(t_1) \le C_1 \Phi_B(t_2)$, the previous lower bound leads to
$$
\min\left\{k^p, k^{p_N}\right\} \fint_{B} \mathcal{H}(x, t_2)  dx \le C_1 \fint_{B} \mathcal{H}(x, t_2) dx.
$$
Noting that $\Phi_B(t_2) > 0$ for $t_2 > 0$, it enables us to divide both sides by $\fint_{B} \mathcal{H}(x, t_2)  dx$ to get $\min\left\{k^p, k^{p_N}\right\} \le C_1$. This guarantees the uniform bound
$$
k \le \max\left\{C_1^{\frac{1}{p_N}}, C_1^{\frac{1}{p}}\right\}.
$$
Consequently, by setting $C_2 := \max\left\{C_1^{\frac{1}{p_N}}, C_1^{\frac{1}{p}}\right\} \ge 1$, we arrive at $t_1 \le C_2 t_2$, which proves \eqref{eq:hyp_eng_1}. The proof for the inverse function $\Phi_B^{-1}$ follows by a similar argument. The proof is then complete.
\end{proof}

The following corollary is an immediate consequence of Lemmas \ref{lem:scaling_phi} and \ref{lem:scale_comparison}.
\begin{lemma}\label{lem-coro-phi-1}
For each $B \subset \mathbb{R}^n$ and $\lambda, t \ge 0$, the following inequality holds:
\begin{align}\notag
\min\left\{\lambda^{\frac{1}{p}}, \lambda^{\frac{1}{p_N}}\right\} \Phi_B^{-1}(t) \le \Phi_B^{-1}(\lambda t) \le \max\left\{\lambda^{\frac{1}{p}}, \lambda^{\frac{1}{p_N}}\right\} \Phi_B^{-1}(t).
\end{align}
\end{lemma}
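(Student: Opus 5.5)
The plan is to invert the scaling bounds of Lemma \ref{lem:scaling_phi}(i). Since $\Phi_B$ is a continuous, strictly increasing bijection of $[0,\infty)$, the inverse $\Phi_B^{-1}$ is well defined and increasing. The cases $\lambda=0$ or $t=0$ are trivial, because both sides then vanish; here we use $\Phi_B^{-1}(0)=0$ and the fact that $\min$ and $\max$ of $0^{1/p}$ and $0^{1/p_N}$ are $0$. So we assume $\lambda,t>0$ and set $s:=\Phi_B^{-1}(t)>0$, so that $t=\Phi_B(s)$.

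\emph{Upper bound.} Put $\mu:=\max\{\lambda^{1/p},\lambda^{1/p_N}\}$. By Lemma \ref{lem:scaling_phi}(i),
\[
\Phi_B(\mu s)\ge \min\{\mu^p,\mu^{p_N}\}\,\Phi_B(s).
\]
We distinguish two cases.
\begin{itemize}
\item If $\lambda\ge1$, then $\mu=\lambda^{1/p}\ge1$, so $\min\{\mu^p,\mu^{p_N}\}=\mu^p=\lambda$.
\item If $\lambda<1$, then $\mu=\lambda^{1/p_N}<1$, so $\min\{\mu^p,\mu^{p_N}\}=\mu^{p_N}=\lambda$.
\end{itemize}
In either case $\Phi_B(\mu s)\ge \lambda t$. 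Applying the increasing map $\Phi_B^{-1}$ gives $\Phi_B^{-1}(\lambda t)\le \mu s$, which is the right-hand inequality.

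\emph{Lower bound.} Put $\nu:=\min\{\lambda^{1/p},\lambda^{1/p_N}\}$. The same case split applies: $\nu=\lambda^{1/p_N}\ge1$ when $\lambda\ge1$, and $\nu=\lambda^{1/p}<1$ when $\lambda<1$. In both cases $\max\{\nu^p,\nu^{p_N}\}=\lambda$. The upper estimate in Lemma \ref{lem:scaling_phi}(i) then yields $\Phi_B(\nu s)\le\lambda t$, and hence $\nu s\le \Phi_B^{-1}(\lambda t)$.

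The argument has no genuine obstacle. The only point needing care is the bookkeeping of which exponent attains the min or max in each regime of $\lambda$, so that $\mu$ and $\nu$ are chosen to make the scaling factor exactly $\lambda$.
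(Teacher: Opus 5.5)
Your proof is correct and takes the route the paper intends. The paper gives no written proof; it only calls the lemma an immediate consequence of Lemma \ref{lem:scaling_phi} (and Lemma \ref{lem:scale_comparison}), and inverting Lemma \ref{lem:scaling_phi}(i) through the monotone bijection $\Phi_B^{-1}$, with $\mu,\nu$ chosen so the scaling factor is exactly $\lambda$ in each regime of $\lambda$, is precisely that argument written out.
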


\section{Absence of Lavrentiev phenomenon}
\label{sec:Lav}

Before describing our work, we briefly recall the notion of the Lavrentiev phenomenon in the Calculus of Variations. One of the fundamental questions in variational problems concerns the existence of minimizers in the natural energy space associated with the variational integral. The Lavrentiev phenomenon occurs when the infimum of a variational integral over its natural energy space (often a generalized Sobolev space) is strictly smaller than the corresponding infimum in the class of smooth (or Lipschitz) admissible functions. This phenomenon is related to the failure of smooth functions to be dense in the natural energy space.

In the elliptic setting, the double-phase functional \eqref{eq:double} has naturally emerged as a crucial model for investigating the Lavrentiev phenomenon in variational problems with non-standard growth. The analysis of local minimizers is important for establishing conditions under which the Lavrentiev phenomenon is absent. In particular, it is known that when $a(\cdot)$ is H\"older continuous, i.e., $a \in C^{0,\alpha}$ for some $\alpha \in (0,1]$, the sharp condition $\frac{q}{p} \le 1+\frac{\alpha}{n}$ guarantees the density of smooth functions in the natural energy space, and therefore the absence of the Lavrentiev phenomenon. The readers may refer to \cite{ELM2004, CM2015, FMM2004, FM2019} for remarkable contributions. 

From the technical point of view, the absence of the Lavrentiev phenomenon is closely related to the density of smooth functions in the natural energy space associated with the functional. In the double-phase setting, such a density property can be established via the classical mollification techniques, which require the boundedness properties of the Hardy-Littlewood maximal operator $\mathbf{M}$ in the energy space. Specifically, as highlighted by Colombo-Mingione in \cite[Section 4]{CM2015}, by ensuring the boundedness of $\mathbf{M}$, standard mollification procedures become admissible, since they are pointwise controlled by the maximal function. By employing the Dominated Convergence Theorem, this yields the convergence of smooth approximations in the modular topology of the energy space, and therefore, concludes the density of $C^\infty$ functions in the corresponding generalized Sobolev space. As a result, the infimum over the natural energy space coincides with the infimum over smooth admissible functions. Hence, the Lavrentiev phenomenon does not occur.

Motivated by the strategy described above, in the multi-phase setting, under the structural Muckenhoupt assumptions \eqref{eq:ai} on the modulating coefficients $a_i(\cdot)$, a natural approach for proving the absence of the Lavrentiev phenomenon is to first establish the boundedness of the Hardy-Littlewood maximal operator on the associated Musielak-Orlicz space. This step is stated in the following lemma.

\begin{lemma}
\label{lem-Maximal}
Let the modular function $\mathcal{H}$ be defined as in \eqref{def-Hxt}, and assume that $a_i \in A_{p_i}$, for every $i \in I_N$. Then, there exists a constant $C=C(\mathtt{data})>0$ such that 
\begin{align}\label{Mphi}
\int_{\mathbb{R}^n} \mathcal{H}\left(x,\mathbf{M}f(x)\right)dx \le  C \int_{\mathbb{R}^n} \mathcal{H}\left(x,|f(x)|\right)dx,
\end{align}
for every $f \in L^{\mathcal{H}}(\mathbb{R}^n)$. In particular, the Hardy–Littlewood maximal operator $\mathbf{M}$ is bounded on the Musielak–Orlicz space $L^{\mathcal{H}}(\Omega)$. 
\end{lemma}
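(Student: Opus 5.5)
The plan is to exploit the additive structure of $\mathcal{H}$ and decouple the modular inequality \eqref{Mphi} into $N+1$ separate weighted $L^{\gamma}$ bounds, each handled by Lemma \ref{Mbound}. Since
\begin{align*}
\mathcal{H}(x,\mathbf{M}f(x)) = |\mathbf{M}f(x)|^p + \sum_{i=1}^N a_i(x)\,|\mathbf{M}f(x)|^{p_i},
\end{align*}
integrating over $\mathbb{R}^n$ splits the left-hand side of \eqref{Mphi} into an unweighted term and $N$ weighted terms.

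For the first term we use the classical Hardy--Littlewood maximal theorem on $L^p(\mathbb{R}^n)$, valid because $p>1$. Equivalently, this is Lemma \ref{Mbound} with $\omega\equiv 1 \in A_p$ and $[1]_{A_p}=1$. It gives $\int |\mathbf{M}f|^p\,dx \le C(n,p)\int |f|^p\,dx$.

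For the $i$-th term, assumption \eqref{eq:ai} says $a_i\in A_{p_i}$ with $p_i>1$, so Lemma \ref{Mbound} with $\gamma=p_i$ and $\omega=a_i$ yields
\begin{align*}
\int_{\mathbb{R}^n} a_i(x)\,|\mathbf{M}f(x)|^{p_i}\,dx \le C_i \int_{\mathbb{R}^n} a_i(x)\,|f(x)|^{p_i}\,dx .
\end{align*}
The key quantitative point is that $C_i$ depends only on $n$, $p_i$ and $[a_i]_{A_{p_i}}$, for instance through Buckley's bound $C_i\lesssim [a_i]_{A_{p_i}}^{1/(p_i-1)}$ raised to the power $p_i$. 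So every constant is controlled by $\mathtt{data}$.

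We then sum the $N+1$ inequalities with $C:=\max\{C_0,C_1,\dots,C_N\}$. Each summand on the right is bounded by $\int\mathcal{H}(x,|f|)\,dx$, which is finite because $f\in L^{\mathcal{H}}$ (one may first apply the argument to $f/s$ with $s$ slightly larger than the Luxemburg norm).

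To pass from the modular inequality to the norm bound, fix $s>\|f\|_{L^{\mathcal{H}}}$. Applying \eqref{Mphi} to $f/s$ and using $\mathbf{M}(f/s)=\mathbf{M}f/s$ gives $\rho_{\mathcal{H}}(\mathbf{M}f/s)\le C$. By Lemma \ref{lem:scaling_H}(i) with $\lambda=C^{-1/p}\le 1$ we get
\begin{align*}
\rho_{\mathcal{H}}\Big(\frac{\mathbf{M}f}{C^{1/p}s}\Big)\le C^{-1}\rho_{\mathcal{H}}\Big(\frac{\mathbf{M}f}{s}\Big)\le 1 .
\end{align*}
Hence $\|\mathbf{M}f\|_{L^{\mathcal{H}}}\le C^{1/p}\|f\|_{L^{\mathcal{H}}}$. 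On $\Omega$ we extend $f$ by zero, noting that the weights $a_i$ are defined on all of $\mathbb{R}^n$ by Definition \ref{def:Muck}.

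The only point requiring care is this dependence of the weighted maximal constants. Everything else is routine, since $\mathcal{H}$ is a finite sum of power-type weighted modulars and the maximal operator acts on each one separately.
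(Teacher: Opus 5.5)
Your proposal is correct and follows essentially the same route as the paper. You split $\mathcal{H}$ into the unweighted $p$-term and the $N$ weighted $p_i$-terms, apply the (weighted) maximal theorem of Lemma \ref{Mbound} to each, and take $C=\max\{C_0,\dots,C_N\}$. Your extra step, which converts the modular inequality into the Luxemburg-norm bound $\|\mathbf{M}f\|_{L^{\mathcal{H}}}\le C^{1/p}\|f\|_{L^{\mathcal{H}}}$ via Lemma \ref{lem:scaling_H}(i), correctly supplies the ``in particular'' part that the paper leaves implicit.
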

\begin{proof}
The proof follows directly from the boundedness of $\mathbf{M}$ on the unweighted Lebesgue spaces $L^p(\mathbb{R}^n)$, presented in Lemma \ref{Mbound}. Hence, there exists a constant $C_0=C_0(n,p)>0$ such that 
\begin{align*}
\int_{\mathbb{R}^n} |\mathbf{M}f(x)|^p dx \le C \int_{\mathbb{R}^n} |f(x)|^p dx.
\end{align*}
Similarly, for the weighted spaces, the assumption $a_i \in A_{p_i}$ ensures that the weights belong to the Muckenhoupt class. Similarly, for every $i \in I_N$, there exist constants $C_i=C_i(n,p_i,[a_i]_{A_{p_i}}) > 0$ such that 
\begin{align*}
\int_{\mathbb{R}^n} a_i(x)|\mathbf{M}f(x)|^{p_i} dx \le C_i \int_{\mathbb{R}^n} a_i(x)|f(x)|^{p_i} dx. 
\end{align*}
Combining the above estimates by choosing $C=\max\{C_0, C_1, \cdots, C_N\}>0$ (which depends only on the structural data $\mathtt{data}$), it yields 
\begin{align*}
\int_{\mathbb{R}^n} \mathcal{H}\left(x,\mathbf{M}f(x)\right)dx & =  \int_{\mathbb{R}^n} |\mathbf{M}f(x)|^p dx + \sum_{i=1}^N \int_{\mathbb{R}^n} a_i(x)|\mathbf{M}f(x)|^{p_i} dx \\
& \le C\left(\int_{\mathbb{R}^n} |f(x)|^p dx + \sum_{i=1}^N \int_{\mathbb{R}^n} a_i(x)|f(x)|^{p_i} dx\right),
\end{align*}
which leads to \eqref{Mphi}. This completes the proof.
\end{proof}

Next, the boundedness property of $\mathbf{M}$ on the energy space allows us to use the classical mollification arguments. More precisely, the convergence is carefully constructed locally on sufficiently small balls contained in $\Omega$. Through this localized standard mollification, for any given function $v$, one can construct an approximating sequence $(v_m) \subset W^{1,\infty}$ strongly converging to $v$ in the natural energy space. Furthermore, the maximal function provides an integrable majorant for the mollified gradients. Combining this with the Dominated Convergence Theorem, we guarantee that the corresponding energies converge. As a consequence, smooth (or Lipschitz) functions are dense in the natural energy space, and therefore no Lavrentiev gap occurs. This is the content of the next theorem.

\begin{theorem}
\label{theo:Lav}
Let the multi-phase functional $\mathbb{F}$ associated with $\mathcal{H}$ be defined as in \eqref{def-Hxt}, and assume that $a_i \in A_{p_i}$ for every $i \in I_N$. Then, for  every function $v \in W^{1,1}_{\mathrm{loc}}(\Omega)$ and any pair of balls $B \Subset \widetilde{B} \Subset \Omega$ satisfying $\mathbb{F}(v,\widetilde{B}) < \infty$, there exists a sequence $(v_m)_{m \ge 1} \subset W^{1,\infty}(B)$ such that
\begin{align}\label{Lav-convergece}
v_m \to v \mbox{ strongly in } W^{1,p}(B) \ \ \text{and}\ \ \mathbb{F}(v_m,B) \to \mathbb{F}(v,B), \quad \text{as} \  m \to \infty.
\end{align}
\end{theorem}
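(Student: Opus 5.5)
The plan is the standard mollification argument, with Lemma \ref{lem-Maximal} supplying the integrable majorant.

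\textbf{Step 1: cut-off extension.} Fix a ball $B'$ with $B \Subset B' \Subset \widetilde{B}$ and a cut-off $\eta \in C_c^\infty(\widetilde{B})$ with $\eta \equiv 1$ on $B'$. Set $w := \eta v$, extended by zero to $\mathbb{R}^n$. Since $\mathcal{H}(x,t) \ge t^p$, the hypothesis $\mathbb{F}(v,\widetilde{B})<\infty$ gives $\nabla v \in L^p(\widetilde{B})$, hence $v \in W^{1,p}(\widetilde{B})$ by Poincaré/Sobolev on the ball. Then $\nabla w = \eta \nabla v + v\nabla\eta$, and on $B'$ we simply have $w=v$.

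\textbf{Step 2: mollify.} Let $v_m := w * \rho_{\varepsilon_m}$ with $\varepsilon_m \to 0$ and $\varepsilon_m < \mathrm{dist}(B, \partial B')$. Then on $B$ we have $\nabla v_m = (\nabla v)*\rho_{\varepsilon_m}$, computed only from values in $B'$. Each $v_m$ is smooth on $\overline{B}$, so $v_m \in W^{1,\infty}(B)$. Moreover $v_m \to v$ in $W^{1,p}(B)$ by standard mollifier theory, and along a subsequence $\nabla v_m \to \nabla v$ a.e. in $B$.

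\textbf{Step 3: dominated convergence.} For $x \in B$ we have the pointwise bound $|\nabla v_m(x)| \le \|\rho\|_\infty |B_1|\,\mathbf{M}(\chi_{B'}|\nabla v|)(x)$. By Lemma \ref{lem:scaling_H}(i),
\begin{align*}
\mathcal{H}(x,|\nabla v_m(x)|) \le C\,\mathcal{H}\big(x,\mathbf{M}(\chi_{B'}|\nabla v|)(x)\big), \quad x \in B.
\end{align*}
Applying Lemma \ref{lem-Maximal} to $f=\chi_{B'}|\nabla v|$, the right-hand side is integrable on $\mathbb{R}^n$, with
\begin{align*}
\int_{\mathbb{R}^n}\mathcal{H}(x,\mathbf{M}f)\,dx \le C\,\mathbb{F}(v,B') < \infty .
\end{align*}
Since $t\mapsto \mathcal{H}(x,t)$ is continuous for a.e. $x$, we get $\mathcal{H}(x,|\nabla v_m|) \to \mathcal{H}(x,|\nabla v|)$ a.e. in $B$. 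Dominated convergence then yields $\mathbb{F}(v_m,B) \to \mathbb{F}(v,B)$ along the subsequence. Finally, relabel the subsequence as $(v_m)$.

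\textbf{Main obstacle: the hypothesis of Lemma \ref{lem-Maximal}.} That lemma requires $f \in L^{\mathcal{H}}(\mathbb{R}^n)$. The truncation by $\chi_{B'}$ and the fact that $\mathbb{F}(v,\widetilde B)<\infty$ is the only integrability assumed make this hold. This is also why the argument does not need the term $v\nabla\eta$: that term would require $a_i|v|^{p_i}$ to be integrable, which we do not know. Working with $\nabla v$ directly inside $B'$ sidesteps this, and the Muckenhoupt bounds hold globally, so no condition linking the $p_i$, $p$ and $n$ is needed.
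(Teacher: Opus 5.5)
Your proof is correct and follows essentially the same route as the paper: mollify, dominate $|\nabla v_m|$ on $B$ by $\mathbf{M}$ of the truncated gradient, use Lemma \ref{lem-Maximal} to get an integrable majorant, and conclude by dominated convergence. The differences are cosmetic. The paper uses a radial non-increasing mollifier, which gives $|\nabla v_m|\le \mathbf{M}(g)$ with constant $1$, and mollifies $v$ directly; you instead absorb the constant $\|\rho\|_\infty|B_1|$ via Lemma \ref{lem:scaling_H}(i) and add a cut-off extension. Your passage to a subsequence is unnecessary, since mollifications converge at every Lebesgue point, while your Step~1 usefully justifies that $v\in W^{1,p}$, which the paper takes for granted.
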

\begin{proof}
Suppose that $v \in W^{1,1}_{\mathrm{loc}}(\Omega)$ and consider balls $B = B_r \Subset \widetilde{B} \Subset \Omega$ satisfying $\mathbb{F}(v,\widetilde{B}) < \infty$. Let us take $\varepsilon_0 \in (0, 1)$ sufficiently small such that the $\varepsilon_0$-neighborhood of $B$ is strictly contained in $\tilde{B}$, namely, $B_{r+\varepsilon_0} \Subset \widetilde{B} \Subset \Omega$. 

Let $\varphi \in C_0^\infty(B_1)$ be a standard mollifier, that is,
$\varphi\ge0$, $\int_{\mathbb R^n}\varphi dx=1$, $\varphi$ is radially symmetric and $\varphi$ is non-increasing with respect to $|x|$. For every $\varepsilon \in (0,\varepsilon_0)$, we define
\begin{align}\notag
\varphi_\varepsilon(x) = \varepsilon^{-n} \varphi \left(\varepsilon^{-1}x \right), \quad x \in  B_\varepsilon.
\end{align}
It is easy to verify that $\varphi_\varepsilon \in C_0^\infty(B_\varepsilon)$ and
\begin{align}\notag
\int_{\mathbb{R}^n} \varphi_\varepsilon(x)  dx = 1, \ 0 \le \varphi_\varepsilon \le C(n)\varepsilon^{-n}, \ |\nabla \varphi_\varepsilon| \le C(n)\varepsilon^{-(n+1)}.
\end{align}
Taking a sequence $(\varepsilon_m)_{m \ge 1} \subset (0,\varepsilon_0)$ such that $\varepsilon_m \to 0$ as $m \to \infty$, we denote
$$v_{m} := v \ast \varphi_{\varepsilon_m}, \quad m \ge 1.$$
It is obvious that $(v_m)_{m \ge 1} \subset W^{1,\infty}(B)$ and  $v_m \to v$ strongly in $ W^{1,p}(B)$. 

Let $g := |\nabla v| \chi_{\widetilde{B}}$ denote the extension of $|\nabla v|$ by zero outside $\widetilde{B}$. Since $\varepsilon_m < \varepsilon_0$, for any $x \in B$, the mollification $(|\nabla v| \ast \varphi_{\varepsilon_m})(x)$ for $x \in B$ depends only on values within $\widetilde{B}$. Since $\varphi$ is radial and non-increasing,
the classical maximal-function estimate yields $(|g|*\varphi_{\varepsilon_m})(x) \le \mathbf M(g)(x)$. Therefore, we obtain the pointwise bound with constant $1$:
\begin{align*}
|\nabla v_{m}(x)| \le (|g| \ast \varphi_{\varepsilon_m})(x) \le \mathbf{M}(g)(x) \quad \mbox{for every } x \in B.
\end{align*}
Since $t \mapsto \mathcal{H}(x,t)$ is increasing, this ensures that
\begin{align}\notag
\mathcal{H}\big(x, |\nabla v_{m}(x)|\big) \le \mathcal{H}\big(x, \mathbf{M}(g)(x)\big) \quad \mbox{for every } x \in B.
\end{align}
Thanks to Lemma \ref{lem-Maximal}, one gets $\mathcal H(x,\mathbf{M}(g))\in L^1(B)$. More precisely, there holds
\begin{align*}
\int_B \mathcal{H}\big(x, \mathbf{M}(g)(x)\big) dx \le C \int_{\mathbb{R}^n} \mathcal{H}\big(x, g(x)\big) dx = C \int_{\widetilde{B}} \mathcal{H}\big(x, |\nabla v(x)|\big) dx \le C  \mathbb{F}(v,\widetilde{B}) < \infty.
\end{align*}
Since $v\in W^{1,1}_{\rm loc}(\Omega)$, the standard properties of
mollification imply that $\nabla v_m(x)\to \nabla v(x)$ for almost every  $x \in B$. Therefore, $\mathcal H(x,|\nabla v_m(x)|) \to \mathcal H(x,|\nabla v(x)|)$ for almost every $x\in B$. Applying the Dominated Convergence Theorem, it follows that
\begin{align*}
\int_B \mathcal{H}\big(x, |\nabla v_{m}|\big) dx \to \int_B \mathcal{H}\big(x, |\nabla v|\big) dx.
\end{align*}
This ensures the convergence of the energies $\mathbb{F}(v_m, B) \to \mathbb{F}(v, B)$. The proof is complete.
\end{proof}

\section{Local higher integrability}
\label{sec:high_int}

This section is devoted to the higher integrability of local minimizers for the multi-phase functional $\mathbb{F}$ defined in \eqref{eq-main}.  Our primary objective is to prove Theorem \ref{theo:higher_int} under the assumption that the modulation coefficients belong to suitable Muckenhoupt classes. To this end, we first systematically reconstruct the classical variational estimates adapted to the multi-phase structure, including Jensen-type inequalities, Sobolev-Poincar\'e inequalities, and a suitable Caccioppoli inequality. The proofs rely on a flexible combination of classical H\"older and Sobolev-Poincar\'e inequalities together with the self-improving properties of Muckenhoupt $A_{p_i}$ weights. These ingredients enable us to derive a reverse H\"older-type inequality for local minimizers, from which the desired higher integrability follows by an application of Gehring's lemma.

\begin{lemma}
\label{lem:Re-Holder}
Let the integrand $\mathcal{H}$ be defined as in \eqref{def-Hxt}, and assume that $a_i \in A_{p_i}$ for every $i \in I_N$. Then, there exists a constant $\theta = \theta(\mathtt{data})>1$ such that the function $x\mapsto \mathcal H(x,t)$ belongs to the reverse H\"older class, i.e. $\mathcal{H} \left(\cdot,t\right) \in RH_{\theta}$, for every $t \ge 0$. In particular, there exists a constant $C = C(\mathtt{data})>0$ such that
\begin{align}
\label{phi-eps}
\left( \fint_B \big[\mathcal{H} \left(x,t\right)\big]^{\theta}dx \right)^{\frac{1}{\theta}} \le  C \fint_B \mathcal{H}(x,t)dx,
\end{align}
for any ball $B \subset \mathbb{R}^n$ and $t \ge 0$. 
\end{lemma}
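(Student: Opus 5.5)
The plan is to reduce the statement to the reverse H\"older property of each individual weight $a_i$, and then recombine the $N+1$ terms of $\mathcal{H}(x,t)$ by the Minkowski inequality in $L^\theta(B, dx/|B|)$.

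\emph{Step 1: a reverse H\"older exponent for each weight.} Fix $i \in I_N$. Since $a_i \in A_{p_i} \subset A_\infty$, Remark \ref{rmk:Muc-RH} (Coifman--Fefferman) gives some $\theta_i > 1$ with $a_i \in RH_{\theta_i}$. By the classical quantitative form of this result, $\theta_i$ and the reverse H\"older constant $C_i$ can be chosen depending only on $n$, $p_i$ and $[a_i]_{A_{p_i}}$; for instance one may take $\theta_i = 1 + \frac{1}{c(n,p_i)[a_i]_{A_{p_i}}}$. Now set
\[
\theta := \min\{\theta_1,\dots,\theta_N\} > 1 .
\]
By Remark \ref{rmk:self-improve} we have $RH_{\theta_i} \subset RH_{\theta}$. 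More directly, H\"older's inequality on averages gives $(\fint_B a_i^\theta)^{1/\theta} \le (\fint_B a_i^{\theta_i})^{1/\theta_i}$. Hence
\[
\left(\fint_B a_i^\theta \, dx\right)^{1/\theta} \le C_i \fint_B a_i \, dx
\]
for every ball $B$ and every $i \in I_N$, with $\theta$ and all $C_i$ depending only on $\mathtt{data}$.

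\emph{Step 2: recombination.} Fix $t \ge 0$ and a ball $B$. The term $t^p$ is constant in $x$, so its $L^\theta$-average equals $t^p$. Applying Minkowski's inequality to the sum defining $\mathcal{H}(x,t)$ gives
\[
\left(\fint_B \mathcal{H}(x,t)^\theta dx\right)^{1/\theta} \le t^p + \sum_{i=1}^N t^{p_i}\left(\fint_B a_i^\theta dx\right)^{1/\theta} \le t^p + \sum_{i=1}^N C_i\, t^{p_i} (a_i)_B .
\]
The right-hand side is at most $\max\{1, C_1,\dots,C_N\}\,\Phi_B(t)$. Since $\Phi_B(t) = \fint_B \mathcal{H}(x,t)\,dx$, this is exactly \eqref{phi-eps} with $C = \max\{1,C_i\}$. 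In particular, $\mathcal{H}(\cdot,t) \in RH_\theta$ uniformly in $t$.

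\emph{Main obstacle.} The only delicate point is the dependence of the constants. The bare qualitative statement $A_\infty = \bigcup_{\lambda>1} RH_\lambda$ does not by itself give $\theta$ and $C$ depending only on $\mathtt{data}$. For this I would invoke the quantitative reverse H\"older inequality for $A_p$ weights, e.g. via the Calder\'on--Zygmund/Gehring argument or the sharp Hyt\"onen--P\'erez bound, which gives exponent and constant controlled by $n$, $p_i$ and $[a_i]_{A_{p_i}}$. Everything else in the argument is elementary, and the resulting $\theta$ and $C$ are uniform in $t$ because the $t$-dependence factors out of each term.
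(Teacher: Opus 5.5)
Your proof is correct and follows the paper's argument. Like the paper, you take $\theta=\min_i\theta_i$ from the characterization $A_\infty=\bigcup_{\lambda>1}RH_\lambda$ and then recombine the terms of $\mathcal{H}(x,t)$. The only difference is that you use Minkowski's inequality in $L^\theta(B,dx/|B|)$ where the paper uses $(\sum_j x_j)^\theta\le C\sum_j x_j^\theta$, which changes nothing essential. Your remark that a quantitative reverse H\"older inequality is needed for $\theta$ and $C$ to depend only on $\mathtt{data}$ is a worthwhile precision, since the paper leaves this point implicit.
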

\begin{proof}
As we mentioned in Remark \ref{rmk:Muc-RH}, the Muckenhoupt class $A_{\infty}$ can be characterized as 
$$
A_{\infty} = \bigcup_{\gamma >1} A_{\gamma} = \bigcup_{\lambda>1} RH_{\lambda}.
$$ 
Since $a_i \in A_{p_i} \subset A_\infty$ for all $i \in I_N$, there exist $\theta_i > 1$ such that $a_i \in RH_{\theta_i}$. By choosing $\theta = \min\{\theta_1, \theta_2, ..., \theta_N\} > 1$ and using the inclusion property of reverse H\"older classes (namely, $RH_{q} \subset RH_{p}$ for $1<p \le q$), we ensure that $a_i \in RH_{\theta}$ for all $i \in I_N$. Consequently, there exists a constant $C=C(\mathtt{data})>0$ such that
\begin{align*}
\fint_B [a_i(x)]^{\theta} dx \le C \left(\fint_B a_i(x) dx\right)^{\theta}, \ \mbox{ for all } i \in I_N.
\end{align*}
A straightforward calculation yields
\begin{align*}
\fint_B\big[\mathcal{H} \left(x,t\right)\big]^{\theta} dx & = \fint_B \left(t^p + \sum_{i=1}^N a_i(x) t^{p_i}\right)^{\theta} dx \\
& \le C \fint_B \left(t^p\right)^{\theta} dx + C \sum_{i=1}^N \fint_B \big[a_i(x) t^{p_i}\big]^{\theta} dx \\
& \le C \left(\fint_B t^p dx\right)^{\theta}  + C \sum_{i=1}^N \left(\fint_B a_i(x) t^{p_i} dx\right)^{\theta}   \\
& \le  C \left[\fint_B \mathcal{H}(x,t)dx\right]^{\theta}.
\end{align*}
The proof is complete.
\end{proof}

\subsection{Jensen-type inequalities}

This section is devoted to establishing two technical results regarding Jensen-type inequalities adapted to the multi-phase structure under consideration. These auxiliary estimates play a crucial role in the proofs of the subsequent regularity results.

\begin{lemma}[Jensen-type inequality]
\label{lem-Jensen}
Let the integrand $\mathcal{H}$ be defined as in \eqref{def-Hxt}, and assume that $a_i \in A_{p_i}$ for every $i \in I_N$. Then, there holds
\begin{align}\label{phi-phi}
\fint_B \mathcal{H}\left(x,\fint_B |f(y)|dy\right)dx \le  \max_{i \in I_N}\left\{1; [a_i]_{A_{p_i}}\right\} \fint_B \mathcal{H}\left(x,|f(x)|\right)dx,
\end{align}
for any ball $B \subset \mathbb{R}^n$ and every function $f \in L^{\mathcal{H}}(B)$. Moreover, there exist $\theta=\theta(\mathtt{data})>1$ and a constant $C= C(\mathtt{data}) >0$ such that
\begin{align}\label{Jen-ineq}
\left[ \fint_B \left[ \mathcal{H}\left(x, \fint_B |f(y)| \, dy\right) \right]^{\theta} dx \right]^{\frac{1}{\theta}} \le C \fint_B \mathcal{H}\big(x, |f(x)|\big) dx,
\end{align}
for any ball $B \subset \mathbb{R}^n$.
\end{lemma}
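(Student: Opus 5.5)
The plan is to prove \eqref{phi-phi} term by term, using the structure $\mathcal{H}(x,t)=t^p+\sum_i a_i(x)t^{p_i}$. With $t_0:=\fint_B|f|\,dy$, the left-hand side of \eqref{phi-phi} equals
\begin{align*}
t_0^p+\sum_{i=1}^N (a_i)_B\, t_0^{p_i}.
\end{align*}
For the unweighted part, the classical Jensen (or H\"older) inequality gives $t_0^p\le \fint_B|f|^p\,dx$, with constant $1$. For each weighted part, I will use the standard fact that $A_{p_i}$ weights control averages. Write $|f|=|f|a_i^{1/p_i}a_i^{-1/p_i}$ and apply H\"older with exponents $p_i,p_i'$:
\begin{align*}
\fint_B|f|\,dx\le \left(\fint_B |f|^{p_i}a_i\,dx\right)^{\frac{1}{p_i}}\left(\fint_B a_i^{-\frac{1}{p_i-1}}dx\right)^{\frac{p_i-1}{p_i}}.
\end{align*}
Raising this to the power $p_i$ and multiplying by $(a_i)_B$ gives
\begin{align*}
(a_i)_B\,t_0^{p_i}\le [a_i]_{A_{p_i}}\fint_B a_i|f|^{p_i}\,dx,
\end{align*}
directly from Definition \ref{def:Muck}. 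Summing the $N+1$ estimates and bounding each constant by $\max_i\{1;[a_i]_{A_{p_i}}\}$ yields \eqref{phi-phi}. If $a_i$ vanishes on a set of positive measure the weight is not in $A_{p_i}$, so the negative power is integrable and nothing degenerates. The edge case $\fint_B |f|^{p_i}a_i=\infty$ is trivial.

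For \eqref{Jen-ineq}, I apply Lemma \ref{lem:Re-Holder} with the constant value $t=t_0$. This gives $\theta=\theta(\mathtt{data})>1$ and $C$ such that
\begin{align*}
\left(\fint_B[\mathcal{H}(x,t_0)]^\theta dx\right)^{\frac1\theta}\le C\fint_B\mathcal{H}(x,t_0)\,dx.
\end{align*}
Chaining this with \eqref{phi-phi} gives \eqref{Jen-ineq} with constant $C\max_i\{1;[a_i]_{A_{p_i}}\}$, which depends only on $\mathtt{data}$.

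The only delicate point is the $A_{p_i}$-H\"older step. One must check that the exponent bookkeeping produces exactly the Muckenhoupt characteristic: the power $(p_i-1)/p_i$ raised to $p_i$ gives $p_i-1$, matching \eqref{A-gamma}. Everything else is a direct combination of earlier results. The constant $\theta$ is the same one produced in Lemma \ref{lem:Re-Holder}, namely the minimum of the reverse H\"older exponents of the $a_i$.
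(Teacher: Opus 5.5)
Your proposal is correct and matches the paper's proof step for step. The paper likewise applies H\"older's inequality against $a_i^{1/p_i}a_i^{-1/p_i}$ to extract the $A_{p_i}$ characteristic, uses Jensen for the $t^p$ term, and sums to get \eqref{phi-phi}. It then applies Lemma \ref{lem:Re-Holder} to the constant $t_0=\fint_B|f|\,dy$ and chains this with \eqref{phi-phi} to obtain \eqref{Jen-ineq}.
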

\begin{proof}
For any ball $B \subset \mathbb{R}^n$ and every function $f \in L^{\mathcal{H}}(B)$, let us denote the integral average of $|f|$ as
\begin{align*}
J_B = \fint_B |f(y)|dy.
\end{align*}
For every $i \in I_N$, by applying H\"older's inequality with respect to the weight $a_i$, we obtain the following estimates
\begin{align*}
J_B & \le \left(\fint_B a_i(y)^{-\frac{1}{p_i-1}}dy\right)^{\frac{p_i-1}{p_i}} \left(\fint_B a_i(y) |f(y)|^{p_i}dy\right)^{\frac{1}{p_i}}.
\end{align*}
At this stage, since $a_i \in A_{p_i}$, we infer that
\begin{align*}
\fint_B  a_i(x) \big[J_B\big]^{p_i} dx & \le \left[\fint_B  a_i(x)dx \left(\fint_B a_i(x)^{-\frac{1}{p_i-1}}dx\right)^{p_i-1}\right] \left(\fint_B a_i(x) |f(x)|^{p_i} dx\right) \\
& \le [a_i]_{A_{p_i}} \fint_B a_i(x) |f(x)|^{p_i}dx.
\end{align*}
By the classical Jensen inequality for the unweighted convex mapping $t \mapsto t^p$ (since $p \ge 1$), it is then clear that  $\big[J_B\big]^p \le \fint_B |f(x)|^p dx$. Summing these estimates over all $i \in I_N$, we arrive at
\begin{align*}
\fint_B \mathcal{H}\left(x,J_B\right)dx & = \fint_B \big[J_B\big]^pdx + \sum_{i=1}^N \fint_B a_i(x)\big[J_B\big]^{p_i}dx\\
& \le \fint_B |f(x)|^p dx + \sum_{i=1}^N [a_i]_{A_{p_i}} \fint_B a_i(x) |f(x)|^{p_i}dx \\
& \le \max_{i \in I_N}\left\{1, [a_i]_{A_{p_i}}\right\} \fint_B \mathcal{H}\left(x,|f(x)|\right)dx,
\end{align*}
which leads to \eqref{phi-phi}. It remains to deduce \eqref{Jen-ineq}. To this end, observe that since $J_B$ is a constant independent of $x$, it enables us to apply Lemma \ref{lem:Re-Holder} directly to the function $x \mapsto \mathcal{H}(x, J_B)$. Combining this with the previously established estimate \eqref{phi-phi} directly yields the desired inequality \eqref{Jen-ineq}. The proof is now complete.
\end{proof}

\begin{lemma}[Generalized Jensen-type inequality]
\label{lem:general_Jen}
Let the integrand $\mathcal{H}$ be defined as in \eqref{def-Hxt}, and assume that $a_i \in A_{p_i}$ for every $i \in I_N$. Then, there exists a constant $\theta = \theta(\mathtt{data}) \in (1,p]$ such that $a_i \in A_{p_i/\theta}$ for all $i\in I_N$ and
\begin{align}\label{phi-phi-theta}
\fint_B \mathcal{H}\left(x,\left(\fint_B |f(y)|^{\theta}dy\right)^{\frac{1}{\theta}}\right)dx \le  \max_{i \in I_N}\left\{1, [a_i]_{A_{p_i/\theta}}\right\} \fint_B \mathcal{H}\left(x,|f(x)|\right)dx,
\end{align}
for any ball $B \subset \mathbb{R}^n$ and every function $f \in L^{\mathcal{H}}(B)$. 
\end{lemma}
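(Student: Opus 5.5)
The plan is to repeat the proof of Lemma \ref{lem-Jensen} with the exponent $p_i$ replaced by $p_i/\theta$ and $|f|$ replaced by $|f|^\theta$. Everything then rests on choosing $\theta$ correctly.

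\textbf{Choice of $\theta$.} We use the self-improving (openness) property of Muckenhoupt classes recalled after Definition \ref{def:Muck}. Since $a_i \in A_{p_i}$, there is $\varepsilon_i = \varepsilon_i(n,p_i,[a_i]_{A_{p_i}})>0$ with $a_i \in A_{p_i-\varepsilon_i}$ and $p_i - \varepsilon_i > 1$. Set
\[
\theta := \min\Big\{p,\ \min_{i\in I_N}\frac{p_i}{p_i-\varepsilon_i}\Big\} \in (1,p].
\]
Then $p_i/\theta \ge p_i - \varepsilon_i$ for every $i \in I_N$. By the monotonicity $A_{\lambda_1}\subset A_{\lambda_2}$ for $\lambda_1<\lambda_2$ from Remark \ref{rmk:self-improve}, this gives $a_i \in A_{p_i/\theta}$. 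Moreover $[a_i]_{A_{p_i/\theta}} \le [a_i]_{A_{p_i-\varepsilon_i}}$, because the map $\gamma\mapsto (\fint \omega^{-1/(\gamma-1)})^{\gamma-1}$ is non-increasing in $\gamma$ by Hölder's inequality. Hence all constants depend only on $\mathtt{data}$. Note also $p_i/\theta \ge p/\theta \ge 1$. I expect this step to be the main point of care. The issue is that $p_i/\theta$ must stay strictly above $1$ so that the $A_{p_i/\theta}$ condition is meaningful. If $p_i/\theta=1$ could occur (only when $\theta=p=p_i$), I would shrink $\theta$ slightly below $p$ to keep $p_i/\theta>1$.

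\textbf{Estimate for the weighted terms.} Write $J_B := (\fint_B |f|^\theta\,dy)^{1/\theta}$ and $q_i := p_i/\theta > 1$. Apply Hölder's inequality to $|f|^\theta = a_i^{-1/q_i}\cdot a_i^{1/q_i}|f|^\theta$ with exponents $q_i' $ and $q_i$:
\[
J_B^\theta = \fint_B |f|^\theta \le \Big(\fint_B a_i^{-\frac{1}{q_i-1}}\Big)^{\frac{q_i-1}{q_i}}\Big(\fint_B a_i |f|^{p_i}\Big)^{\frac{1}{q_i}}.
\]
Raise this to the power $q_i$, so that $J_B^{p_i}$ appears on the left. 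Then multiply by $\fint_B a_i$ and use the definition \eqref{A-gamma} with $\gamma=q_i$. This yields
\[
\fint_B a_i(x)\,J_B^{p_i}\,dx \le [a_i]_{A_{p_i/\theta}} \fint_B a_i |f|^{p_i}\,dx .
\]

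\textbf{Estimate for the unweighted term and conclusion.} Since $\theta\le p$, Hölder's (or Jensen's) inequality gives $J_B^p \le \fint_B |f|^p$. Summing the unweighted bound and the weighted bounds over $i\in I_N$ reproduces the $\mathcal{H}$-structure on both sides, exactly as in the proof of Lemma \ref{lem-Jensen}. This gives \eqref{phi-phi-theta} with constant $\max_{i\in I_N}\{1,[a_i]_{A_{p_i/\theta}}\}$.
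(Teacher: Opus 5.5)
Your proposal is correct and follows the paper's proof essentially step for step. It uses the same choice $\theta=\min\{p,\min_i p_i/(p_i-\varepsilon_i)\}$ from the openness of $A_{p_i}$, the same weighted H\"older step with exponents $p_i/\theta$ and its conjugate, and the same unweighted Jensen step using $\theta\le p$.

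Your edge-case worry is moot, since $p_i/\theta\ge p_i-\varepsilon_i>1$ already forces $p_i/\theta>1$. Your remark that $[a_i]_{A_{p_i/\theta}}\le[a_i]_{A_{p_i-\varepsilon_i}}$ usefully makes explicit that the constant depends only on $\mathtt{data}$, which the paper leaves implicit.
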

\begin{proof}
The proof follows along the same technique as that of Lemma \ref{lem-Jensen}. For any ball $B \subset \mathbb{R}^n$ and every function $f \in L^{\mathcal{H}}(B)$, let us denote 
$$J_{\theta,B} = \left(\fint_B |f(y)|^{\theta}dy\right)^{\frac{1}{\theta}},$$
where the constant $\theta > 1$ will be specified later. By virtue of the self-improving property of Muckenhoupt weights, the assumption $a_i \in A_{p_i}$ guarantees the existence of sufficiently small constants $\epsilon_i > 0$ such that $a_i \in A_{p_i-\epsilon_i}$ for every $i \in I_N$. We then fix $\theta$ satisfying
\begin{align}\label{theta-Jen}
1< \theta \le \min_{i \in I_N}\left\{p, \frac{p_i}{p_i-\epsilon_i}\right\}.
\end{align}
Since $\theta \le p$, H\"older's inequality immediately implies that 
\begin{align*}
\fint_B \big[J_{\theta,B}\big]^pdx & \le \fint_B |f(y)|^pdy.
\end{align*}
Next, we establish the estimate for the component involving the weight $a_i(x)$. Applying H\"older's inequality with the conjugate exponents $\frac{p_i}{\theta}$ and $\frac{p_i}{p_i-\theta}$, we have
\begin{align*}
J_{\theta,B} & \le \left(\fint_B a_i(y)^{-\frac{\theta}{p_i-\theta}}dy\right)^{\frac{p_i-\theta}{p_i\theta}} \left(\fint_B a_i(y) |f(y)|^{p_i}dy\right)^{\frac{1}{p_i}},
\end{align*}
which implies that
\begin{align}\label{J-theta-1}
\fint_B  a_i(x) \big[J_{\theta,B}\big]^{p_i} dx & \le \fint_B a_i(x)dx \left(\fint_B a_i(y)^{-\frac{\theta}{p_i-\theta}}dy\right)^{\frac{p_i-\theta}{\theta}} \left(\fint_B a_i(y) |f(y)|^{p_i}dy\right).
\end{align}
With the choice of $\theta$ in \eqref{theta-Jen}, one has $a_i \in A_{p_i-\epsilon_i} \subset A_{p_i/\theta}$, which ensures that the Muckenhoupt characteristic is well-defined and satisfies 
\begin{align*}
\fint_B  a_i(x)dx \left(\fint_B a_i(y)^{-\frac{\theta}{p_i-\theta}}dy\right)^{\frac{p_i-\theta}{\theta}} \le [a_i]_{A_{p_i/\theta}} < \infty.
\end{align*}
Thus, we obtain from \eqref{J-theta-1} that
\begin{align*}
\fint_B  a_i(x) \big[J_{\theta,B}\big]^{p_i} dx  \le [a_i]_{A_{p_i/\theta}} \fint_B a_i(x) |f(x)|^{p_i}dx.
\end{align*}
Taking all the above estimates into account and summing over all $i \in I_N$, we conclude that
\begin{align*}
\fint_B \mathcal{H}\left(x, J_{\theta,B}\right)dx & \le \max_{i \in I_N}\left\{1, [a_i]_{A_{p_i/\theta}}\right\} \fint_B \mathcal{H}\left(x,|f(x)|\right)dx.
\end{align*}
The proof of \eqref{phi-phi-theta} is now complete. 
\end{proof}

\subsection{Sobolev-Poincar\'e inequality}

The next step is to establish the Sobolev- Poincar\'e inequality associated with the multi-phase integrand $\mathcal{H}$. As a preliminary step, we first prove an estimate involving the localized Riesz potential, which is presented in the following lemma. 
\begin{lemma}
\label{lem-Riesz-I1}
Let the integrand $\mathcal{H}$ be defined as in \eqref{def-Hxt}, and assume that $a_i \in A_{p_i}$ for all $i \in I_N$. Then, there exist $\theta = \theta(\mathtt{data}) > 1$ and a constant $C = C(\mathtt{data}) > 0$ such that 
\begin{align}\label{ineq-H-I1}
\left[\fint_{B} \mathcal{H}^{\theta}\left(x,\frac{\mathbf{I}_{\alpha,B}\big(|f|\big)(x)}{r^{\alpha}}\right)dx\right]^{\frac{1}{\theta}} \le C \fint_{B} \mathcal{H}\left(x, |f(x)|\right)dx,
\end{align}
for any ball $B:=B_r \subset \mathbb{R}^n$ and every function $f \in L^{\mathcal{H}}(B)$. Here, $\mathbf{I}_{\alpha,B}$ indicates the locally Riesz potential of order $\alpha \in (0,n)$, as defined in Definition \ref{def:I}.
\end{lemma}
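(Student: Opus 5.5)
We first reduce the left-hand side of \eqref{ineq-H-I1} to a finite sum of power terms. Since $\mathcal{H}^\theta(x,t)\le C(N,\theta)\bigl(t^{p\theta}+\sum_{i=1}^N a_i(x)^\theta t^{p_i\theta}\bigr)$, it suffices to prove two kinds of estimates. The first is the unweighted bound
\begin{align*}
\Bigl(\fint_B \bigl(r^{-\alpha}\mathbf{I}_{\alpha,B}|f|\bigr)^{p\theta}dx\Bigr)^{\frac{1}{\theta}}\le C\fint_B |f|^p dx.
\end{align*}
The second is the weighted bound
\begin{align*}
\Bigl(\fint_B a_i^\theta \bigl(r^{-\alpha}\mathbf{I}_{\alpha,B}|f|\bigr)^{p_i\theta}dx\Bigr)^{\frac{1}{\theta}}\le C\fint_B a_i|f|^{p_i}dx, \qquad i\in I_N.
\end{align*}
Summing them gives the claim, because each right-hand side is dominated by $\fint_B\mathcal{H}(x,|f|)dx$. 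We intend $\theta>1$ small and fixed by $\mathtt{data}$, taking the minimum of the admissible values over $i$.

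The main tool is the pointwise bound of the local Riesz potential by the maximal function (Hedberg-type). For $x\in B=B_r$ and $g=|f|\chi_B$, split the integral into $|x-y|<\delta$ and $\delta\le|x-y|<2r$. This gives $\mathbf{I}_{\alpha,B}g(x)\le C\delta^\alpha \mathbf{M}g(x)+\dots$. Taking simply $\delta=2r$ yields
\begin{align*}
\mathbf{I}_{\alpha,B}g(x)\le C(n,\alpha)\, r^\alpha\, \mathbf{M}g(x), \qquad x\in B.
\end{align*}
Thus $r^{-\alpha}\mathbf{I}_{\alpha,B}|f|\le C\mathbf{M}(|f|\chi_B)$ on $B$. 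The problem therefore reduces to a local reverse-H\"older bound for $\mathcal{H}(x,\mathbf{M}g)$, with a gain of integrability $\theta>1$.

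We obtain the gain through the self-improvement of the weights. For the unweighted term we use $\theta\le$ something such that $\mathbf{M}$ maps $L^{p}$ into $L^{p\theta}$ on $B$. This fails on its own, so instead we use the genuine Sobolev gain of the Riesz potential. The Hardy--Littlewood--Sobolev inequality, which is Lemma \ref{lem-Ib} with $\omega\equiv1$, gives $\|\mathbf{I}_{\alpha}g\|_{L^{s^*}}\le C\|g\|_{L^{s}}$ for some $s\le p$ with $s^*\ge p\theta$. Combining this with H\"older's inequality on $B$ and scaling yields the first estimate. A cleaner route is to prove the bound for $\alpha$ close to $0$ and use the maximal-function bound together with Lemma \ref{Mbound}.

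For the weighted term we use Lemma \ref{lem-Ib} with $\omega=a_i^{1/p_i}$, $\gamma=p_i$, and $\lambda=p_i\theta$ (or with a small $\alpha'\le\alpha$, absorbing the difference via $|x-y|\le 2r$). The condition $\omega\in A_{\gamma,\lambda}$ reads
\begin{align*}
\Bigl(\fint a_i^{\theta}\Bigr)^{\frac{1}{p_i\theta}}\Bigl(\fint a_i^{-\frac{1}{p_i-1}}\Bigr)^{\frac{p_i-1}{p_i}}<\infty .
\end{align*}
This follows from $a_i\in RH_{\theta}$ (Lemma \ref{lem:Re-Holder}) combined with $a_i\in A_{p_i}$. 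One then localizes by applying the lemma to $g=|f|\chi_B$ and normalizing by $|B|$ and $r^\alpha$. The normalization uses the identities $a_i(B)$ and $\bigl(\fint_B a_i^\theta\bigr)^{1/\theta}\le C(a_i)_B$ to check the scaling, which needs $\frac{1}{p_i\theta}=\frac1{p_i}-\frac{\alpha'}{n}$.

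The main obstacle is the scale bookkeeping. Lemma \ref{lem-Ib} is a global, non-averaged inequality, and turning it into averaged form produces a factor $|B|^{\frac{1}{\lambda}-\frac1\gamma}r^{\alpha'}$. This factor is dimensionally neutral only because $\frac1\lambda=\frac1\gamma-\frac{\alpha'}{n}$. The weights $a_i^\theta$ versus $a_i$ also require the reverse H\"older inequality to swap $\fint_B a_i^\theta$ for $(\fint_B a_i)^\theta$. We would pick $\alpha'=\frac{n(\theta-1)}{p_i\theta}\le\alpha$, with $\theta$ close to $1$, and bound $\mathbf{I}_{\alpha,B}\le (2r)^{\alpha-\alpha'}\mathbf{I}_{\alpha',B}$.
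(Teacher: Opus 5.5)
Your argument is correct once its detours are removed, and it takes a genuinely different route from the paper.

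\textbf{The paper's route.} The paper does not use Lemma \ref{lem-Ib} at all. It decomposes the kernel into dyadic annuli, $\mathbf{I}_{\alpha,B}g(x)\le C r^{\alpha}\sum_{k\ge0}2^{-k\alpha}\fint_{B_{2^{1-k}r}(x)}g\,dy$, and discretizes each scale by a bounded-overlap cover of balls of radius $2^{-k}r$. It pulls the series out of $\mathcal{H}(x,\cdot)$ by convexity and Minkowski's inequality in $L^{\theta}$, then applies the Jensen-type reverse H\"older inequality \eqref{Jen-ineq} on each small ball. Returning from the small balls to $B$ costs a factor $2^{kn(1-1/\theta)}$, which the kernel decay absorbs once $\theta$ is close to $1$.

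\textbf{Your route.} You split $\mathcal{H}^{\theta}$ into its phases and treat each phase with the weighted Hardy--Littlewood--Sobolev inequality of Muckenhoupt--Wheeden, taking $\omega=a_i^{1/p_i}$, $\gamma=p_i$, $\lambda=p_i\theta$ and $\alpha_i'=\frac{n(\theta-1)}{p_i\theta}$. This works for three reasons. First, the condition $a_i^{1/p_i}\in A_{p_i,p_i\theta}$ is exactly $RH_{\theta}$ combined with $A_{p_i}$. Second, the passage to averages is pure scaling, because $|B|^{\frac{1}{p_i}-\frac{1}{p_i\theta}}=c(n)\,r^{\alpha_i'}$; the reverse H\"older inequality is needed only for the weight condition, not here. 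Third, the pointwise comparison $\mathbf{I}_{\alpha,B}\le(2r)^{\alpha-\alpha_i'}\mathbf{I}_{\alpha_i',B}$ on $B$ lets a single $\theta$ serve all phases.

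\textbf{What each buys.} Your route is shorter and shows transparently that the gain $\theta>1$ is precisely the Sobolev gain of the potential, and that $RH_\theta\cap A_{p_i}$ is exactly what the weights must provide. The price is invoking a deep theorem as a black box. The paper's route is elementary and self-contained and handles $\mathcal{H}$ as a whole through convexity and \eqref{Jen-ineq}. It would therefore carry over to integrands for which only a Jensen-type reverse H\"older bound is available, but it requires more delicate covering and convexity bookkeeping.

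\textbf{Two clean-ups.} First, discard the maximal-function reduction and the suggested ``cleaner route'' through Lemma \ref{Mbound}. Since $\mathbf{M}g\ge g$ a.e., bounding $\bigl(\fint_B\mathcal{H}^{\theta}(x,\mathbf{M}g)\,dx\bigr)^{1/\theta}$ by $\fint_B\mathcal{H}(x,g)\,dx$ would amount to a reverse H\"older inequality for arbitrary $g$. That is false: take constant weights and $g=\chi_E$ with $|E|\to0$. So $\mathbf{M}$ can never supply the exponent $\theta>1$.

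Second, the unweighted term needs no separate discussion. It is the case $a_0\equiv1$, $p_0=p$ of your weighted argument, where $\omega\equiv1\in A_{p,p\theta}$ trivially and Lemma \ref{lem-Ib} reduces to the classical Hardy--Littlewood--Sobolev inequality.
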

\begin{proof}
Let $B = B_r \subset \Omega$ be a ball of radius $r>0$ and denote $g := \chi_B |f|$. For each $k \in \mathbb{N}_0$, we define the new radii $r_k = 2^{-k}r$. Fix $x \in B$, it is clear that $B \subset B_{2r}(x)$. Thus, we can decompose the domain as
\begin{align*}
B \subset B_{2r}(x) = \bigcup_{k=0}^{\infty} \mathcal{D}_k(x), \mbox{ where } \mathcal{D}_k(x) := B_{2r_k}(x)\setminus B_{r_k}(x).
\end{align*}
For every $y \in \mathcal{D}_k(x)$, one has $|x - y| \ge r_k$. The localized Riesz potential can be estimated by partitioning the integral domain
\begin{align}
\mathbf{I}_{\alpha,B}\big(|f|\big)(x) & = \int_{B} \frac{g(y)}{|x-y|^{n-\alpha}} dy \notag \\
& \le \sum_{k=0}^{\infty} \int_{\mathcal{D}_k(x)} \frac{g(y)}{|x-y|^{n-\alpha}} dy \notag \\
& \le \sum_{k=0}^{\infty} \left(\frac{1}{r_k}\right)^{n-\alpha} \int_{\mathcal{D}_k(x)} g(y) dy \notag \\
& \le C \sum_{k=0}^{\infty} \left(\frac{1}{r_k}\right)^{n-\alpha} \left(2r_k\right)^n  \fint_{B_{2r_k}(x)} g(y) dy \notag \\
&\le C  r^{\alpha} \sum_{k=0}^{\infty} 2^{-k\alpha} \fint_{B_{2r_k}(x)} g(y) dy.\label{II-I1}
\end{align}
Next, it is possible to cover the ball $2B$ by a family of balls $\mathcal{B}^k = \{\widetilde{B}\}$ of radius $r_k$ with bounded overlap such that $2B \subset \bigcup_{\widetilde{B} \in \mathcal{B}^k} \widetilde{B} \subset 4B$. Therefore, \eqref{II-I1} can be discretized as
\begin{align}\notag
\mathbf{I}_{\alpha,B}\big(|f|\big)(x) &\le C  r^{\alpha} \sum_{k=0}^{\infty} 2^{-k\alpha} \sum_{\widetilde{B} \in \mathcal{B}^k} \chi_{\widetilde{B}}(x) \fint_{\widetilde{B}} g(y) dy,
\end{align}
By using the convexity of $\mathcal{H}(x, \cdot)$ via a weighted version of Jensen's inequality for series, it follows that
\begin{align*}
\mathcal{H}\left(x, \frac{\mathbf{I}_{\alpha,B}\big(|f|\big)(x)}{r^{\alpha}} \right) &\le \mathcal{H}\left(x, C \sum_{k=0}^{\infty} 2^{-k\alpha} \sum_{\widetilde{B} \in \mathcal{B}^k} \chi_{\widetilde{B}}(x) \fint_{\widetilde{B}} g(y) dy \right) \\
&\le C \sum_{k=0}^{\infty} 2^{-k\alpha p} \mathcal{H}\left(x, \sum_{\widetilde{B} \in \mathcal{B}^k} \chi_{\widetilde{B}}(x) \fint_{\widetilde{B}} g(y) dy \right).
\end{align*}
Next, applying Minkowski's inequality in the space $L^\theta(B)$ (since $\theta > 1$), one obtains from the previous estimate that
\begin{align}\label{minkowski-step}
& \left[\fint_{B} \mathcal{H}^{\theta}\left(x,\frac{\mathbf{I}_{\alpha,B}\big(|f|\big)(x)}{r^{\alpha}}\right)  dx \right]^{\frac{1}{\theta}} \notag \\
& \qquad \qquad \le C \sum_{k=0}^{\infty} 2^{-k \alpha p} \left[\fint_{B} \mathcal{H}^{\theta}\left(x, \sum_{\widetilde{B} \in \mathcal{B}^k} \chi_{\widetilde{B}}(x) \fint_{\widetilde{B}} g(y) \, dy\right)  dx \right]^{\frac{1}{\theta}}.
\end{align}
Since the family of balls $\mathcal{B}^k$ has bounded overlap, the functions $\chi_{\widetilde{B}}$ have almost disjoint supports for each fixed $k$. This property allows us to distribute the power $\theta$ inside the summation over $\mathcal{B}^k$. Consequently, by the subadditivity of the mapping $t \mapsto t^{1/\theta}$, we arrive at
\begin{align}\label{cover-step}
&\left[\fint_{B} \mathcal{H}^{\theta}\left(x, \sum_{\widetilde{B} \in \mathcal{B}^k} \chi_{\widetilde{B}}(x) \fint_{\widetilde{B}} g(y)  dy\right)  dx \right]^{\frac{1}{\theta}} \notag \\
&\qquad \qquad \le C \left[\sum_{\widetilde{B} \in \mathcal{B}^k} \frac{|\widetilde{B}|}{|B|} \fint_{\widetilde{B}} \mathcal{H}^{\theta}\left(x, \fint_{\widetilde{B}} g(y)  dy\right)  dx \right]^{\frac{1}{\theta}} \notag \\
&\qquad \qquad \le C \left[\frac{|\widetilde{B}|}{|B|}\right]^{\frac{1}{\theta}} \sum_{\widetilde{B} \in \mathcal{B}^k} \left[\fint_{\widetilde{B}} \mathcal{H}^{\theta}\left(x, \fint_{\widetilde{B}} g(y) \, dy\right)  dx \right]^{\frac{1}{\theta}}.
\end{align}
At this stage, since $a_i \in A_{p_i}$ for all $i \in I_N$, the multi-phase integrand satisfies a reverse H\"older-type condition. Thus, for $\theta > 1$ chosen sufficiently close to $1$, we can apply Jensen-type inequality \eqref{Jen-ineq} to deduce that
\begin{align*}
\left[\fint_{\widetilde{B}} \mathcal{H}^{\theta}\left(x, \fint_{\widetilde{B}} g(y) dy\right) dx \right]^{\frac{1}{\theta}} &\le C \fint_{\widetilde{B}} \mathcal{H}\left(x, \fint_{\widetilde{B}} g(y)  dy\right) dx \\
&\le C \fint_{\widetilde{B}} \mathcal{H}\big(x, \chi_B(x) |f(x)|\big)  dx.
\end{align*}
Combining this with \eqref{minkowski-step} and \eqref{cover-step}, and reabsorbing localized averages on $\widetilde{B}$ back to the global average on $B$ via the scaling factor $|B|/|\widetilde{B}|$, we get
\begin{align}
\left[\fint_{B} \mathcal{H}^{\theta}\left(x,\frac{\mathbf{I}_{\alpha,B}\big(|f|\big)(x)}{r^{\alpha}}\right)dx\right]^{\frac{1}{\theta}}  & \le C \sum_{k=0}^{\infty} 2^{-k \alpha p} \left[\frac{|\widetilde{B}|}{|B|}\right]^{\frac{1}{\theta}} \sum_{\widetilde{B} \in \mathcal{B}^k}  \fint_{\widetilde{B}} \mathcal{H}\big(x, \chi_B(x) |f(x)|\big)dx \notag \\
& \le C \sum_{k=0}^{\infty} 2^{-k \alpha p} \left[\frac{|\widetilde{B}|}{|B|}\right]^{\frac{1}{\theta}-1} \fint_{B} \mathcal{H}\left(x, |f(x)|\right)dx \notag \\
& \le C \sum_{k=0}^{\infty} 2^{-k\left(\alpha p+\frac{n}{\theta}-n\right)}  \fint_{B} \mathcal{H}\left(x, |f(x)|\right)dx. \notag %.\label{III_I1}
\end{align}
Summarizing, we obtain the following estimate
\begin{align}\label{final-series-step}
\left[\fint_{B} \mathcal{H}^{\theta}\left(x,\frac{\mathbf{I}_{\alpha,B}\big(|f|\big)(x)}{r^{\alpha}}\right)  dx \right]^{\frac{1}{\theta}} \le C \left( \sum_{k=0}^{\infty} 2^{-k\left(\alpha p + \frac{n}{\theta} - n\right)} \right) \fint_{B} \mathcal{H}\left(x, |f(x)|\right)  dx.
\end{align}
Note that $\alpha p > 0$. By choosing $\theta > 1$ sufficiently close to $1$ such that the structural threshold remains strictly positive, specifically,
\begin{align*}
\alpha p + \frac{n}{\theta} - n > 0.
\end{align*}
This condition guarantees the convergence of our geometric series on the right-hand side of \eqref{final-series-step}. Then, the desired estimate follows immediately from \eqref{final-series-step}. The proof is now complete.
\end{proof}

\begin{lemma}[Sobolev-Poincar\'e inequalities]
\label{lem-Sob-Poin}
Let the integrand $\mathcal{H}$ be defined as in \eqref{def-Hxt}, and assume that $a_i \in A_{p_i}$ for all $i \in I_N$. Consider a ball $B := B_r \subset \Omega$. Then, there exist $\theta=\theta(\mathtt{data})>1$ and a constant $C=C(\mathtt{data})>0$ such that
\begin{align}\label{SP-ineq}
\left[\fint_{B} \mathcal{H}^{\theta}\left(x,\frac{|u-(u)_B|}{r}\right)dx\right]^{\frac{1}{\theta}} \le C \fint_{B} \mathcal{H}\left(x,|\nabla u|\right)dx,
\end{align}
for every $u \in W^{1,\mathcal{H}}(B)$. In particular, if $|E|>0$, where  $E = \{x \in B:  u(x) = 0\}$ then there holds
\begin{align}\label{SP-ineq-new}
\left[\fint_{B} \mathcal{H}^{\theta}\left(x,\frac{|u|}{r}\right)dx\right]^{\frac{1}{\theta}} \le C \left(1 + \frac{|B|}{|E|}\right)^{p_N} \fint_{B} \mathcal{H}\left(x,|\nabla u|\right)dx.
\end{align}
\end{lemma}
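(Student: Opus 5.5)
The plan is to reduce \eqref{SP-ineq} to Lemma \ref{lem-Riesz-I1} with $\alpha=1$ via the classical pointwise potential estimate. For $u \in W^{1,\mathcal{H}}(B) \subset W^{1,1}(B)$ and a.e. $x \in B$ we have
\begin{align*}
|u(x)-(u)_B| \le C(n) \int_B \frac{|\nabla u(y)|}{|x-y|^{n-1}}\,dy = C(n)\, \mathbf{I}_{1,B}\big(|\nabla u|\big)(x).
\end{align*}
This holds first for smooth functions and then for $u$ by density. Density in $W^{1,1}$ suffices for the a.e. inequality, so Theorem \ref{theo:Lav} is not really needed here. Dividing by $r$, using monotonicity of $\mathcal{H}(x,\cdot)$ and Lemma \ref{lem:scaling_H}(i) to pull out $C(n)$ at the cost of a factor $C(n)^{p_N}$, we obtain pointwise
\begin{align*}
\mathcal{H}\left(x,\frac{|u(x)-(u)_B|}{r}\right) \le C\, \mathcal{H}\left(x,\frac{\mathbf{I}_{1,B}(|\nabla u|)(x)}{r}\right).
\end{align*}
Raising this to the power $\theta$, averaging over $B$, and invoking \eqref{ineq-H-I1} with $\alpha=1$ and $f=|\nabla u|$ gives \eqref{SP-ineq}. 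The exponent $\theta$ is the one from Lemma \ref{lem-Riesz-I1}, chosen close to $1$ so that $p+\frac{n}{\theta}-n>0$.

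For \eqref{SP-ineq-new}, I will write $|u| \le |u-(u)_B| + |(u)_B|$ and estimate the constant $|(u)_B|$ using the zero set $E$. Since $u=0$ on $E$,
\begin{align*}
|(u)_B| = \fint_E |u-(u)_B|\,dx \le \frac{|B|}{|E|}\fint_B |u-(u)_B|\,dx.
\end{align*}
Then, by Lemma \ref{lem:scaling_H}(i) and quasi-subadditivity $\mathcal{H}(x,s+t)\le 2^{p_N-1}(\mathcal{H}(x,s)+\mathcal{H}(x,t))$, we get
\begin{align*}
\mathcal{H}\left(x,\frac{|u|}{r}\right) \le C\,\mathcal{H}\left(x,\frac{|u-(u)_B|}{r}\right) + C\left(\frac{|B|}{|E|}\right)^{p_N}\mathcal{H}\left(x,\fint_B\frac{|u-(u)_B|}{r}\,dy\right),
\end{align*}
where we assume $|B|/|E|\ge 1$. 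The first term is controlled by \eqref{SP-ineq}. For the second term, the $L^\theta$-average is handled by the Jensen-type estimate \eqref{Jen-ineq} applied to $f=|u-(u)_B|/r$. It is bounded by $C\fint_B \mathcal{H}(x,|u-(u)_B|/r)\,dx$, which in turn is bounded via \eqref{SP-ineq} together with Hölder's inequality ($\theta>1$). Collecting terms gives the factor $C(1+|B|/|E|)^{p_N}$.

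The main obstacle is only bookkeeping of $\theta$. The $\theta$ in \eqref{Jen-ineq} comes from the reverse Hölder exponent in Lemma \ref{lem:Re-Holder}, whereas \eqref{ineq-H-I1} requires $\theta$ close to $1$. I will therefore take the minimum of the two, using that smaller $\theta$ preserves both inequalities by Hölder. Beyond that, the substantive analytic content has already been absorbed into Lemma \ref{lem-Riesz-I1}, so I expect no further difficulty.
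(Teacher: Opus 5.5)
Your proposal is correct and follows the paper's own proof. For \eqref{SP-ineq} you use the pointwise bound $|u-(u)_B|\le C\,\mathbf{I}_{1,B}(|\nabla u|)$ fed into Lemma \ref{lem-Riesz-I1}; for \eqref{SP-ineq-new} you control $|(u)_B|$ through the zero set $E$ and then apply \eqref{Jen-ineq}, H\"older and \eqref{SP-ineq}. Your identity $|(u)_B|=\fint_E|u-(u)_B|\,dx$ is slightly sharper than the paper's bound with a factor $2$, and the assumption $|B|/|E|\ge 1$ holds automatically because $E\subset B$.
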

\begin{proof}
Consider $B:= B_r \subset \Omega$ and a function $u \in W^{1,\mathcal{H}}(B)$. By the classical Riesz potential estimate, for every $x \in B$, we have
\begin{align}\notag
|u(x) - (u)_B| \le C\int_B \frac{|\nabla  u(y)|}{|x-y|^{n-1}} dy = C \mathbf{I}_{1,B}(|\nabla  u|)(x),
\end{align}
which leads to
\begin{align}\notag
\left[\fint_{B} \mathcal{H}^{\theta}\left(x,\frac{|u-(u)_B|}{r}\right)dx\right]^{\frac{1}{\theta}} \le C \left[\fint_{B} \mathcal{H}^{\theta}\left(x,\frac{\mathbf{I}_{1,B}(|\nabla  u|)(x)}{r}\right)dx\right]^{\frac{1}{\theta}}.
\end{align}
Thanks to Lemma \ref{lem-Riesz-I1}, we deduce \eqref{SP-ineq}. To establish \eqref{SP-ineq-new}, we further assume that $|E| = |\{x \in B: \ u(x) = 0\}|>0$. For every $x \in E$, since $u(x) = 0$, it allows us to write 
$$(u)_B = \fint_B \big[u(y) - u(x)\big] dy,$$ 
which in turn yields that
\begin{align}
|(u)_B| & \le \frac{1}{|E|} \int_E \left( \frac{1}{|B|} \int_B |u(y) - u(x)| dy \right) dx\notag \\
& \le \frac{|B|}{|E|} \fint_B \fint_B |u(y) - u(x)| \,dy \,dx \notag \\
& \le \frac{|B|}{|E|} \fint_B \fint_B \left( |u(y) - (u)_B| + |u(x) - (u)_B| \right) dy \,dx \notag \\
& = 2 \frac{|B|}{|E|} \fint_B |u(y) - (u)_B| dy.\notag
\end{align}
Applying the generalized Jensen inequality \eqref{Jen-ineq} and H\"older's inequality, it follows that
\begin{align}
\left[\fint_B \mathcal{H}^{\theta}\left(x, \frac{|(u)_B|}{r}\right) dx\right]^{\frac{1}{\theta}} & \le C \left(\frac{|B|}{|E|}\right)^{p_N}  \left[\fint_B\mathcal{H}^{\theta}\left(x, \fint_B \frac{|u(y) - (u)_B|}{r} dy \right) dx\right]^{\frac{1}{\theta}} \notag \\
& \le C \left(\frac{|B|}{|E|}\right)^{p_N} \fint_B\mathcal{H}\left(x, \frac{|u - (u)_B|}{r} \right) dx \notag \\
& \le C \left(\frac{|B|}{|E|}\right)^{p_N} \left[\fint_B\mathcal{H}^{\theta}\left(x, \frac{|u - (u)_B|}{r} \right) dx\right]^{\frac{1}{\theta}}. \label{SP-100}
\end{align}
Making use of the following fundamental inequality
\begin{align*}
\mathcal{H}^{\theta}\left(x, \frac{|u|}{r}\right) \le C \left[\mathcal{H}^{\theta}\left(x, \frac{|u-(u)_B|}{r}\right) + \mathcal{H}^{\theta}\left(x, \frac{|(u)_B|}{r}\right)\right],
\end{align*}
one readily obtain \eqref{SP-ineq-new} from \eqref{SP-ineq} and \eqref{SP-100}. This completes the proof.
\end{proof}

\subsection{Caccioppoli-type inequality}

We now turn our attention to deriving the Caccioppoli inequality for local minimizers of the multi-phase functional $\mathbb{F}$, which plays an indispensable role in establishing their subsequent H\"older regularity.

\begin{lemma}[Caccioppoli-type inequality]
\label{lem-Cacci}
Assume that $u \in W^{1,\mathcal{H}}(\Omega)$ is a local minimizer of the functional $\mathbb{F}$ defined as in \eqref{eq-main} with $a_i \in A_{p_i}$ for every $i \in I_N$. Let $0<\varrho < R$ such that $B_R \Subset \Omega$.  Then, there exists a constant $C = C(\mathtt{data})>0$ such that
\begin{align}\label{Cacci-ineq-0}
\int_{B_{\varrho}} \mathcal{H}(x,|\nabla u|) dx \le C \int_{B_{R}} \mathcal{H}\left(x, \frac{|u - \lambda|}{R - \varrho}\right)dx,
\end{align}
and
\begin{align}\label{Cacci-ineq}
\int_{B_{\varrho}} \mathcal{H}(x,|\nabla (u - \lambda)_{\pm}|) dx \le C \int_{B_{R}} \mathcal{H}\left(x, \frac{(u - \lambda)_{\pm}}{R - \varrho}\right)dx,
\end{align}
for all $\lambda \in \mathbb{R}$. Here, $(\cdot)_{+}$ and $(\cdot)_{-}$ denote the positive and negative parts, respectively, given by
$$(u(x)-\lambda)_{+} = \max\{u(x)-\lambda, 0\} \quad \text{and} \quad (u(x)-\lambda)_{-} = \max\{\lambda - u(x), 0\}.$$ 
\end{lemma}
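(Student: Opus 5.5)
We follow the classical route for Caccioppoli inequalities of minimizers: test minimality with a competitor built from a cut-off function, obtain an estimate with a small multiple of the energy on a larger ball, and remove that term by a hole-filling iteration. We treat \eqref{Cacci-ineq} for $(u-\lambda)_+$; the case $(u-\lambda)_-$ is symmetric (replace $u$ by $-u$, which is again a minimizer since $\mathcal H$ depends only on $|\nabla u|$). Then \eqref{Cacci-ineq-0} follows by adding the two one-sided estimates, because $|\nabla u| = |\nabla (u-\lambda)_+| + |\nabla(u-\lambda)_-|$ a.e., so $\mathcal H(x,|\nabla u|)\le \mathcal H(x,|\nabla(u-\lambda)_+|)+\mathcal H(x,|\nabla(u-\lambda)_-|)$ a.e., and $(u-\lambda)_\pm\le|u-\lambda|$.

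Fix $\varrho\le s<t\le R$ and a cut-off $\eta\in C_0^\infty(B_t)$ with $0\le\eta\le1$, $\eta\equiv1$ on $B_s$, $|\nabla\eta|\le 2/(t-s)$. The competitor is $w:=u-\eta\,(u-\lambda)_+$. It lies in $W^{1,\mathcal H}_{\mathrm{loc}}(\Omega)$ with $\mathrm{supp}(u-w)\subset \overline{B_t}\Subset\Omega$. On the set $A_t:=\{u>\lambda\}\cap B_t$ we have $\nabla w=(1-\eta)\nabla u-(u-\lambda)\nabla\eta$; off this set $w=u$. Minimality \eqref{var-form}, after cancelling the common contributions outside $A_t$, gives
\begin{align*}
\int_{A_t}\mathcal H(x,|\nabla u|)\,dx\le\int_{A_t}\mathcal H\big(x,(1-\eta)|\nabla u|+(u-\lambda)|\nabla\eta|\big)\,dx .
\end{align*}
We use the convexity of $\mathcal H(x,\cdot)$ together with Lemma \ref{lem:scaling_H}(i), i.e. the $\Delta_2$ bound $\mathcal H(x,a+b)\le 2^{p_N-1}(\mathcal H(x,a)+\mathcal H(x,b))$ and $\mathcal H(x,(1-\eta)a)\le(1-\eta)\mathcal H(x,a)$. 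The right side is then bounded by
\begin{align*}
C\int_{A_t\setminus B_s}\mathcal H(x,|\nabla u|)\,dx+C\int_{B_t}\mathcal H\Big(x,\frac{(u-\lambda)_+}{t-s}\Big)dx ,
\end{align*}
since $1-\eta=0$ on $B_s$. Note that $\nabla (u-\lambda)_+=\nabla u\,\chi_{\{u>\lambda\}}$, so the left side dominates $\int_{B_s}\mathcal H(x,|\nabla(u-\lambda)_+|)$. Only the pointwise structure of $\mathcal H$ enters here; the Muckenhoupt condition is not needed, except to guarantee that the energies involved are finite.

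The step needing care is the hole-filling. Set $\phi(s):=\int_{B_s}\mathcal H(x,|\nabla(u-\lambda)_+|)\,dx$. Adding $C\phi(s)$ to both sides yields
\begin{align*}
\phi(s)\le \frac{C}{C+1}\phi(t)+\int_{B_R}\mathcal H\Big(x,\frac{(u-\lambda)_+}{t-s}\Big)dx .
\end{align*}
By Lemma \ref{lem:scaling_H}(i), the last term is at most $\sum_{q\in\{p,p_1,\dots,p_N\}}(t-s)^{-q}\mathcal K_q$, where $\mathcal K_q$ collects the integrals of the corresponding homogeneous pieces of $\mathcal H$. The standard iteration lemma (Giaquinta) for $\phi(s)\le\vartheta\phi(t)+\sum_j A_j(t-s)^{-q_j}$ with $\vartheta<1$ then gives $\phi(\varrho)\le C\sum_j A_j(R-\varrho)^{-q_j}$, and this is exactly $C\int_{B_R}\mathcal H(x,(u-\lambda)_+/(R-\varrho))\,dx$. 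The constant depends only on $p$, $p_N$, $N$, and hence on $\mathtt{data}$. This proves \eqref{Cacci-ineq}.
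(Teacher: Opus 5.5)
Your proof is correct and follows essentially the same route as the paper: a cut-off competitor tested against minimality, a convexity/$\Delta_2$ splitting, hole-filling to get a contraction factor $C/(C+1)<1$, and an iteration over radii $\varrho\le s<t\le R$. The paper writes this iteration out explicitly with a geometric sequence of radii instead of citing Giaquinta's lemma, proves \eqref{Cacci-ineq-0} directly with $w=u-\eta(u-\lambda)$, and says the truncated case is analogous; you instead prove the truncated estimate first (using $-u$ for the negative part) and recover \eqref{Cacci-ineq-0} by adding the two one-sided bounds, which is equally valid.
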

\begin{proof}
For any $\tau$ and $\rho$ such that $0<\varrho \le \tau <\rho \le R$, let $\eta \in C_c^{\infty}(B_{\rho})$ be a smooth cut-off function satisfying
\begin{align}\label{eta-set}
0 \le \eta \le 1, \ \eta \equiv 1 \mbox{ on } B_{\tau}, \mbox{ and } |\nabla \eta| \le \frac{2}{\rho-\tau}.
\end{align}
Taking $w = u - \eta (u-\lambda)$, it is clear to see that 
\begin{align}\label{eta-w-set}
u - w \in W_0^{1,\mathcal{H}}(B_{\rho}) \mbox{ and } \nabla w = (1 - \eta) \nabla u - (u-\lambda) \nabla \eta.
\end{align}
Since $u$ is a local minimizer of $\mathbb{F}$,  \eqref{var-form} yields
\begin{align}\notag
\mathbb{F}(u,\mathrm{supp}(u-w)) \le \mathbb{F}(w,\mathrm{supp}(u-w)),
\end{align}
which implies
\begin{align}\notag
\int_{B_{\rho}} \left(|\nabla u|^p + \sum_{i=1}^N a_i(x) |\nabla u|^{p_i}\right)dx \le \int_{B_{\rho}} \left(|\nabla w|^p + \sum_{i=1}^N a_i(x) |\nabla w|^{p_i}\right)dx.
\end{align}
By \eqref{eta-set} and \eqref{eta-w-set}, one has $|\nabla w| \le (1-\eta)|\nabla u| + 2\left|\frac{u-\lambda}{\rho - \tau}\right|$ in $B_{\rho}$ and $\eta = 1$ in $B_{\tau}$. It follows that
\begin{align}\notag
\int_{B_{\rho}} \left(|\nabla u|^p + \sum_{i=1}^N a_i(x) |\nabla u|^{p_i}\right)dx & \le C_1 \int_{B_{\rho}\setminus B_{\tau}} \left(|\nabla u|^p + \sum_{i=1}^N a_i(x) |\nabla u|^{p_i}\right)dx \\
& \qquad + C\int_{B_{\rho}} \left(\left|\frac{u-\lambda}{\rho - \tau}\right|^p + \sum_{i=1}^N a_i(x) \left|\frac{u-\lambda}{\rho - \tau}\right|^{p_i}\right)dx. \label{Ca-0}
\end{align}
Adding to both sides of \eqref{Ca-0} by the following term
\begin{align}\notag
C_1 \int_{B_{\tau}} \left(|\nabla u|^p + \sum_{i=1}^N a_i(x) |\nabla u|^{p_i}\right)dx,
\end{align}
we obtain that
\begin{align}\notag
\int_{B_{\tau}} \mathcal{H}(x,|\nabla u|) dx & \le \frac{C_1}{1+C_1} \int_{B_{\rho}} \mathcal{H}(x,|\nabla u|) dx \\
& \qquad \qquad +  \sum_{i=0}^N \frac{C_2}{(\rho-\tau)^{p_i}}\int_{B_{\rho}} a_i(x) \left|u-\lambda\right|^{p_i} dx, \label{Giu-1}
\end{align}
where $p_0 := p$ and $a_0 \equiv 1$. \\
Let us consider a strictly increasing sequence $(s_m)_{m\ge0} \subset [\varrho, R]$ defined by 
$$s_0 = \varrho \mbox{ and } s_{m+1}= s_m + \epsilon^m (1-\epsilon) (R-\varrho), \quad m \ge 0,$$ 
where $\epsilon \in (0,1)$ is a constant to be chosen later. For every $m \ge 0$, applying~\eqref{Giu-1} with $\tau = s_m$ and $\rho = s_{m+1}$, one has
\begin{align}\notag
\int_{B_{s_{m}}} \mathcal{H}(x,|\nabla u|) dx \le \sigma \int_{B_{s_{m+1}}} \mathcal{H}(x,|\nabla u|) dx + \sum_{i=0}^N \frac{1}{\epsilon^{m p_i}(1-\epsilon)^{p_i}} \frac{b_i}{(R-\varrho)^{p_i}}.
\end{align}  
Setting $\pi =  \max_{i \in I_N} p_i$, it follows that
\begin{align}\notag
\int_{B_{s_{m}}} \mathcal{H}(x,|\nabla u|) dx \le \sigma \int_{B_{s_{m+1}}} \mathcal{H}(x,|\nabla u|) dx + \left[\frac{1}{(1-\epsilon)^{\pi}}  \sum_{i=0}^N \frac{b_i}{(R-\varrho)^{p_i}}\right] \epsilon^{-m \pi},
\end{align}
for every $m \ge 0$, where $\sigma$ and $b_i$ are explicitly given by
\begin{align*}
\sigma = \frac{C_1}{1+C_1},  \text{ and }  b_i = C_2\int_{B_{\rho}} a_i(x) \left|u-\lambda\right|^{p_i} dx,
\end{align*}
Iterating this inequality $k$ times ($k \ge 1$), we observe
\begin{align}\label{Giu-3}
\int_{B_{s_{0}}} \mathcal{H}(x,|\nabla u|) dx & \le \sigma^{k} \int_{B_{s_{k}}} \mathcal{H}(x,|\nabla u|) dx \notag \\
& \qquad \qquad +  \left[\frac{1}{(1-\epsilon)^{\pi}}  \sum_{i=0}^N \frac{b_i}{(R-\varrho)^{p_i}}\right] \sum_{m=0}^{k-1} \big(\sigma \epsilon^{-\pi}\big)^m .
\end{align}
Since $\sigma \in (0,1)$, it is possible to choose $\epsilon \in (0,1)$ sufficiently close to $1$ such that $\sigma\epsilon^{-\pi} < 1$, which ensures that the geometric series $\sum_{m=0}^{\infty} \big(\sigma \epsilon^{-\pi}\big)^m$ converges. Pass to the limit $k\to \infty$ in \eqref{Giu-3}, we obtain
\begin{align}\notag
\int_{B_{\varrho}} \mathcal{H}(x,|\nabla u|) dx & \le C \sum_{i=0}^N \frac{C_2}{(R-\varrho)^{p_i}}\int_{B_{\rho}} a_i(x) \left|u-\lambda\right|^{p_i} dx \\
& \le C \int_{B_{R}} \mathcal{H}\left(x, \frac{|u - \lambda|}{R - \varrho}\right)dx. \notag
\end{align}
This completes the proof of \eqref{Cacci-ineq-0}. A similar argument applies to the case of $(u-\lambda)_{\pm}$, by replacing $w$ in \eqref{eta-w-set} by $w = u - \eta (u-\lambda)_{\pm}$, we analogously obtain \eqref{Cacci-ineq}.
\end{proof}

\subsection{Proof of Theorem \ref{theo:higher_int}}

Once having the previous lemmas at hand, we are in a position to prove Theorem \ref{theo:higher_int}. 

\begin{proof}[Proof of Theorem \ref{theo:higher_int}]
Let $B_{2r} \Subset \Omega$ be a given ball, and we denote $B = B_r$ and $2B = B_{2r}$. Thanks to the Caccioppoli inequality \eqref{Cacci-ineq-0} established in Lemma \ref{lem-Cacci}, there exists a constant $C = C(\mathtt{data}) > 0$ such that
\begin{align}\notag
\fint_{B} \mathcal{H}(x,|\nabla u|) dx \le C \fint_{2B} \mathcal{H}\left(x, \frac{|u - (u)_{2B}|}{r}\right)dx.
\end{align}
Applying the Sobolev-Poincar\'e inequality (Lemma \ref{lem-Sob-Poin}), one can find a constant $\theta > 1$ such that
\begin{align}\notag
\fint_{2B} \mathcal{H}\left(x, \frac{|u - (u)_{2B}|}{r}\right)dx \le C\left[\fint_{2B} \big[\mathcal{H}(x,|\nabla u|)\big]^{\frac{1}{\theta}}dx\right]^{\theta}.
\end{align}
Combining these estimates, we arrive at the following reverse H\"older inequality:
\begin{align}\notag
\fint_{B} \mathcal{H}(x,|\nabla u|) dx \le C \left[\fint_{2B} \big[\mathcal{H}(x,|\nabla u|)\big]^{\frac{1}{\theta}}dx\right]^{\theta}.
\end{align}
Furthermore, since $\theta > 1$, we have $\frac{1}{\theta} < 1$, applying Lemma \ref{lem:Re-Holder}, it yields the existence of a constant $\delta_0=\delta_0(\mathtt{data})>0$ such that $\mathcal{H}(x, |\nabla u|) \in L^{1+\delta_0}_{loc}(\Omega)$. This implies the higher integrability result \eqref{eq:gehring}. 
\end{proof}

\section{Local bounded minimizers}
\label{sec:Linf}

In this section, we provide the proof of Theorem \ref{theo-Linf},  whose main objective is to establish the local boundedness of minimizers of the multi-phase functional $\mathbb{F}$. The argument follows the classical De Giorgi iteration scheme adapted to the multi-phase structure of $\mathcal{H}$. The key ingredients of our approach are the Caccioppoli inequality for level truncations, the Sobolev-Poincar\'e inequality, and the Jensen-type inequality \eqref{phi-phi} established in Lemma \ref{lem-Jensen}. The desired local boundedness then follows via a standard De Giorgi-type iteration lemma, which we recall below.

\subsection{De Giorgi-type iteration}

\begin{lemma}[De Giorgi-type iteration]
\label{lem:DeGiorgi}
Let $\mu>0$, $\alpha>0$, $\beta>0$, and $K>1$. Assume that $\Psi: [0, +\infty) \to [0, +\infty)$ is a strictly increasing function satisfying
\begin{align}
\label{assump-beta}
\Psi(\lambda t) \le \lambda^{\beta} \Psi(t), \quad \text{for all } t \ge 0 \text{ and } \lambda \le 1.
\end{align}
Let $(Y_m)_{m \ge 0}$ be a sequence of non-negative real numbers such that
\begin{align}\label{assump-Ym}
\Psi(Y_0) \le \mu^{-\frac{1}{\alpha}} K^{-\frac{1}{\alpha} - \frac{1}{\alpha^2\beta}},
 \mbox{ and } \ Y_m \le \mu  K^m Y_{m-1} \left[\Psi\big(Y_{m-1}\big)\right]^{\alpha}, 
\end{align}
for every $m \ge 1$. Then, the sequence $(Y_m)_{m \ge 0}$ satisfies the following geometric decay estimate
\begin{align}
\label{Ym-ansatz}
Y_m \le K^{-\frac{m}{\alpha\beta}} Y_0, \quad \text{for all } m \ge 0.
\end{align}
In particular, $\lim_{m \to \infty} Y_m = 0$.
\end{lemma}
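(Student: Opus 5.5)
The plan is to prove \eqref{Ym-ansatz} by induction on $m$, using only the two hypotheses in \eqref{assump-Ym}, the monotonicity of $\Psi$, and the sub-homogeneity \eqref{assump-beta}. The base case $m=0$ is trivial, since \eqref{Ym-ansatz} then reads $Y_0 \le Y_0$. The exponent $-\frac{m}{\alpha\beta}$ in the ansatz is chosen precisely so that each iteration step exactly consumes the smallness assumption on $\Psi(Y_0)$.

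For the inductive step, fix $m \ge 1$ and assume $Y_{m-1} \le K^{-\frac{m-1}{\alpha\beta}} Y_0$. Set $\lambda := K^{-\frac{m-1}{\alpha\beta}}$. Since $K>1$, we have $\lambda \le 1$, so \eqref{assump-beta} is applicable. Because $\Psi$ is increasing, we get
\begin{align*}
\Psi(Y_{m-1}) \le \Psi(\lambda Y_0) \le \lambda^{\beta}\Psi(Y_0) = K^{-\frac{m-1}{\alpha}}\Psi(Y_0),
\end{align*}
and hence $[\Psi(Y_{m-1})]^{\alpha} \le K^{-(m-1)}[\Psi(Y_0)]^{\alpha}$. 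The first condition in \eqref{assump-Ym} gives $[\Psi(Y_0)]^{\alpha} \le \mu^{-1}K^{-1-\frac{1}{\alpha\beta}}$.

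Inserting the inductive hypothesis and both of these bounds into the recursive inequality in \eqref{assump-Ym} yields
\begin{align*}
Y_m \le \mu K^{m}\, K^{-\frac{m-1}{\alpha\beta}}Y_0\, K^{-(m-1)}\,\mu^{-1}K^{-1-\frac{1}{\alpha\beta}} = K^{\,m-(m-1)-1}\,K^{-\frac{m-1}{\alpha\beta}-\frac{1}{\alpha\beta}}\,Y_0 = K^{-\frac{m}{\alpha\beta}}Y_0,
\end{align*}
which closes the induction. The convergence $Y_m \to 0$ then follows from $K>1$ and $\alpha\beta>0$.

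The only delicate point is bookkeeping: one must check that the scaling factor $\lambda$ fed into \eqref{assump-beta} is indeed at most $1$, and that the exponents of $K$ and the powers of $\mu$ cancel exactly. No genuine obstacle is expected.
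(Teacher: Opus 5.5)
Your proof is correct and is the same argument as the paper's. Both go by induction on $m$: you use monotonicity and the sub-homogeneity of $\Psi$ with $\lambda = K^{-\frac{m-1}{\alpha\beta}} \le 1$ to get $\Psi(Y_{m-1}) \le K^{-\frac{m-1}{\alpha}}\Psi(Y_0)$, and then close the step with the smallness hypothesis in the form $\mu K^{1+\frac{1}{\alpha\beta}}[\Psi(Y_0)]^{\alpha} \le 1$; your explicit check that $\lambda \le 1$ is a detail the paper leaves implicit.
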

\begin{proof}
We shall prove by induction on $m$. Clearly, the base case $m = 0$ holds trivially. Now, assume that the induction hypothesis \eqref{Ym-ansatz} is valid up to step $m-1$ for some $m \ge 1$, which means
$Y_{m-1} \le K^{-\frac{m-1}{\alpha\beta}} Y_0$. Thanks to \eqref{assump-beta}, it yields
$$\Psi(Y_{m-1}) \le \Psi\left(K^{-\frac{m-1}{\alpha\beta}}Y_0\right) \le K^{-\frac{m-1}{\alpha}} \Psi(Y_0).$$
Substituting these bounds into the latter relation of the sequence in \eqref{assump-Ym}, we obtain
\begin{align*}
Y_m & \le \mu K^m \left[K^{-\frac{m-1}{\alpha\beta}} Y_0\right] \left[K^{-\frac{m-1}{\alpha}} \Psi(Y_0)\right]^{\alpha} \\
& = \left[K^{-\frac{m}{\alpha\beta}} Y_0\right] \left[\mu K^{1+\frac{1}{\alpha\beta}} \left[\Psi(Y_0)\right]^{\alpha}\right].
\end{align*}
From the first assumption in \eqref{assump-Ym}, we deduce that $\mu K^{1+\frac{1}{\alpha\beta}} [\Psi(Y_0)]^{\alpha} \le 1$. Consequently,
$$ Y_m \le K^{-\frac{m}{\alpha\beta}} Y_0. $$
This finishes the induction steps, and the proof is complete.
\end{proof}

\subsection{Proof of Theorem \ref{theo-Linf}}

\begin{proof}[Proof of Theorem \ref{theo-Linf}]
For each $m \in \mathbb{N}$, let us first introduce the sequences of radii $r_m>0$, concentric balls $B_m$, and truncation levels $\lambda_m$ defined by
\begin{align}\notag
r_m = r \left(1+\frac{1}{2^m}\right), \quad B_m = B_{r_m}, \quad \lambda_m = L\left(1-\frac{1}{2^m}\right), 
\end{align}
where $L>0$ is a constant to be chosen later. By this setting, we observe that
\begin{align*}
r_{m-1} - r_m = \frac{r}{2^m}, \quad B_r \subset B_m \subset B_{m-1} \subset B_0 = B_{2r}, \quad \text{and} \quad \lambda_m - \lambda_{m-1} = \frac{L}{2^m}, 
\end{align*}
for every $m \ge 1$. On the other hand, we also introduce the sequence $(U_m)_{m \ge 0}$ given by
\begin{align}\notag
U_m = \fint_{B_{m}} \mathcal{H}\left(x,\frac{v_m}{r_m}\right)dx,
\end{align}
where the truncated functions $v_m$ and the corresponding level-set $S_m$ are defined by 
\begin{align}\notag
v_m = (u - (u)_{B} - \lambda_m)_{+} \quad \text{and} \quad S_m = \left\{x \in B_{m}: v_m(x) > 0\right\}.
\end{align}
Thanks to H\"older's inequality, with $\theta>1$ specified from \eqref{SP-ineq}, we infer that
\begin{align}\label{eq:holder_de}
U_m = \frac{|S_{m}|}{|B_{m}|} \fint_{S_{m}} \mathcal{H}\left(x,\frac{v_m}{r_m}\right)dx  \le \left[\fint_{B_{m}} \mathcal{H}^\theta \left(x,\frac{v_m}{r_m}\right)  dx \right]^{\frac{1}{\theta}} \left[\frac{|S_{m}|}{|B_{m}|}\right]^{1 - \frac{1}{\theta}}.
\end{align}
Let us now apply Sobolev-Poincar\'e's inequality \eqref{SP-ineq} with the function $\eta v_m \in W_0^{1,\mathcal{H}}(B_{m-\frac{1}{2}})$, where $B_{m-\frac{1}{2}}:= B_{r_{m-\frac{1}{2}}} = B_{\frac{1}{2}\left(r_{m-1}+r_{m}\right)}$ is a concentric ball and $\eta \in C_c^{\infty}(B_{m-\frac{1}{2}})$  is a standard cut-off function satisfying
\begin{align}\notag
0 \le \eta \le 1, \quad \eta \equiv 1 \text{ on } B_{m}, \quad \text{and} \quad |\nabla \eta| \le \frac{4}{r_{m-1}-r_{m}}.
\end{align}
Taking into account that the ratio of the measures $|B_{m-\frac{1}{2}}|/|B_m|$ is bounded by a positive constant depending only on the dimension $n$, one obtains that
\begin{align}\label{app:SP-1}
\left[\fint_{B_{m}} \mathcal{H}^\theta\left(x, \frac{v_m}{r_{m}}\right)  dx \right]^{\frac{1}{\theta}} & \le C \left[\fint_{B_{m-\frac{1}{2}}} \mathcal{H}^\theta\left(x, \frac{\eta v_m}{r_{m}}\right)  dx \right]^{\frac{1}{\theta}} \notag \\
& \le C \left[\fint_{B_{m-\frac{1}{2}}} \mathcal{H}^\theta\left(x, \frac{\eta v_m}{r_{m-\frac{1}{2}}}\right)  dx \right]^{\frac{1}{\theta}} \notag \\
 & \le C \fint_{B_{m-\frac{1}{2}}} \mathcal{H}(x, |\nabla (\eta v_m)|) dx,
\end{align}
where $C = C(\mathtt{data}) > 0$. Moreover, it is worth noting that 
$$|\nabla (\eta v_m)| \le |\nabla v_m|\eta + |\nabla \eta| v_m \le |\nabla v_m| + 2^{m+2}\frac{v_m}{r},$$
and combining this with the Caccioppoli inequality \eqref{Cacci-ineq}, it follows that
\begin{align}\label{app:SP-2}
\fint_{B_{m-\frac{1}{2}}} \mathcal{H}(x, |\nabla (\eta v_m)|)  dx  & \le C \left[\fint_{B_{m-\frac{1}{2}}} \mathcal{H}(x, |\nabla v_m|)  dx + \fint_{B_{m-\frac{1}{2}}} \mathcal{H}\left(x, 2^{m+2}\frac{v_m}{r}\right)  dx\right] \notag \\
& \le C \left[\fint_{B_{m-1}} \mathcal{H}(x, 2^{m+1}\frac{v_m}{r})  dx + \fint_{B_{m-\frac{1}{2}}} \mathcal{H}\left(x, 2^{m+2}\frac{v_m}{r}\right)  dx\right] \notag \\
& \le C2^{(m+3)p_N} \fint_{B_{m-1}} \mathcal{H}\left(x, \frac{v_{m-1}}{r_{m-1}}\right)  dx.
\end{align}
Combining \eqref{app:SP-1} and \eqref{app:SP-2}, one obtains
\begin{align}\label{app:SP-3}
\left[\fint_{B_{m}} \mathcal{H}^\theta\left(x, \frac{v_m}{r_{m}}\right)  dx \right]^{\frac{1}{\theta}} & \le C 2^{(m+3)p_N}  \fint_{B_{m-1}} \mathcal{H}\left(x, \frac{v_{m-1}}{r_{m-1}}\right)  dx \notag \\
& \le C 2^{(m+3)p_N}  U_{m-1}.
\end{align}
Let us now estimate the measure of the level set $S_m$. For every $x \in S_m$, one can verify that 
$$v_{m-1}(x) > \lambda_{m} - \lambda_{m-1} = \frac{L}{2^m} \chi_{S_m}(x),
$$ 
which implies that
\begin{align}\notag
U_{m-1} & = \frac{1}{|B_{m-1}|} \int_{B_{m-1}} \mathcal{H}\left(x,\frac{v_{m-1}}{r_{m-1}}\right)dx \\
 & \ge \frac{1}{|B_{m-1}|} \int_{S_{m}} \mathcal{H}\left(x,\frac{v_{m-1}}{r_{m-1}}\right)dx  \notag \\
 &\ge \frac{|B_{m}|}{|B_{m-1}|} \fint_{B_{m}} \mathcal{H}\left(x,\frac{L\chi_{S_m}(x)}{2^m r_{m-1}}\right)dx.\notag
\end{align}
Since the volume ratio $|B_m|/|B_{m-1}|$ is bounded from below, it enables us to apply the Jensen-type inequality \eqref{phi-phi} in Lemma \ref{lem-Jensen} to deduce that
\begin{align}\notag
U_{m-1} \ge C \fint_{B_{m}} \mathcal{H}\left(x, \frac{L|S_m|}{2^m r_{m-1}}\right)dx \ge C \Phi_{2B}\left(\frac{L|S_m|}{2^m r_{m-1}}\right).
\end{align}
By Lemma \ref{lem:scale_comparison}, this inequality yields
\begin{align}
|S_m| \le C\Phi_{2B}^{-1}\left(U_{m-1}\right) \frac{2^{m} r_{m-1}}{L} \le C \frac{2^{m} r}{L} \Phi_{2B}^{-1}\left(U_{m-1}\right).\label{app:SP-4}
\end{align}
Substituting \eqref{app:SP-3} and \eqref{app:SP-4} into \eqref{eq:holder_de}, one has
\begin{align}
U_m & \le C 2^{(m+3)p_N}  U_{m-1} \left[\frac{2^{m} r}{L} \Phi_{2B}^{-1}\left(U_{m-1}\right) \right]^{1-\frac{1}{\theta}} \notag \\
& \le C_0  {r}^{1-\frac{1}{\theta}} {L}^{-\left(1-\frac{1}{\theta}\right)} 2^{m\left(p_N + 1-\frac{1}{\theta}\right)}  U_{m-1} \left[\Phi_{2B}^{-1}\left(U_{m-1}\right) \right]^{1-\frac{1}{\theta}}. \notag
\end{align}
Next, we apply Lemma \ref{lem:DeGiorgi} with the parameters $\alpha = 1-\frac{1}{\theta}, \beta = \frac{1}{p_N}$ and
$$\mu = C_0 {r}^{1-\frac{1}{\theta}} {L}^{-\left(1-\frac{1}{\theta}\right)}, \quad K = 2^{p_N + 1-\frac{1}{\theta}}, \quad \Psi = \Phi_{2B}^{-1},$$ 
we can guarantee that $\lim_{m \to \infty} U_m = 0$, provided that the initial value $U_0$ must satisfy the condition
\begin{align}\notag
\Phi_{2B}^{-1}(U_0) & \le \mu^{-\frac{1}{\alpha}} K^{-\frac{1}{\alpha} - \frac{1}{\alpha^2\beta}}= C_0^{-\frac{\theta}{\theta-1}} L r^{-1} 2^{-\left(1+\frac{p_N\theta}{\theta-1}\right)^2}.
\end{align}
To this end, it is sufficient to choose the constant $L>0$ by
$$L = C_0^{\frac{\theta}{\theta-1}} 2^{\left(1+\frac{p_N\theta}{\theta-1}\right)^2}r \Phi_{2B}^{-1}(U_0),$$ 
Therefore, we may conclude that
\begin{align}\notag
\|\big(u - (u)_{2B}\big)_+\|_{L^{\infty}(B)} \le L = C r \Phi_{2B}^{-1}(U_0),
\end{align}
which leads to
\begin{align}\notag
\Phi_{2B}\left(\frac{\|\big(u - (u)_{2B}\big)_+\|_{L^{\infty}(B)}}{r}\right) \le C U_0 \le C \fint_{2B} \mathcal{H}\left(x,\frac{|u - (u)_{2B}|}{r}\right)dx.
\end{align}
We proceed analogously for the function $(u-(u)_{2B})_{-}$ to conclude \eqref{Linf-u}. Finally, inequality \eqref{Linf-u-new} can be established similarly, provided that we replace $v_m$ with the function $w_m:= (u-\lambda_m)_+$ and make the Sobolev-Poincar\'e inequality \eqref{SP-ineq-new} instead of \eqref{SP-ineq}. The proof is now complete.
\end{proof}

\vspace{0.5cm}
We now accomplish the proof of Corollary \ref{coro-Linf}.

\begin{proof}[Proof of Corollary \ref{coro-Linf}]
This corollary is directly derived from Theorem \ref{theo-Linf}. Indeed, according to this theorem, for any ball $B:=B_r$ such that $2B= B_{2r} \Subset \Omega$, there exists a constant $C = C(\mathtt{data})>0$ such that the following estimate holds:
\begin{align}\notag
\fint_{2B}\mathcal{H}\left(x,\frac{\|u - (u)_{2B}\|_{L^{\infty}(B)}}{r}\right)dx \le C \fint_{2B} \mathcal{H}\left(x,\frac{|u - (u)_{2B}|}{r}\right)dx.
\end{align}
On the other hand, the following inequality is obvious
\begin{align*}
\left(\frac{\|u - (u)_{2B}\|_{L^{\infty}(B)}}{r}\right)^p \le \fint_{2B}\mathcal{H}\left(x,\frac{\|u - (u)_{2B}\|_{L^{\infty}(B)}}{r}\right)dx.
\end{align*}
This inequality implies to \eqref{Linf-u-coro}. 
\end{proof}

\section{H\"older continuity of local minimizers}
\label{sec:holder_cont}
\subsection{De Giorgi-type level-set inequality}
\label{sec:levelset}

The current section is devoted to deriving specialized level-set inequalities, which play a key role as a bridge between the boundedness and continuity properties of minimizers. In the context of our multi-phase problem, these estimates are of independent interest, as they allow us to characterize the interaction between different phases and quantify the oscillations, which is fundamental to proving the H\"older continuity of the local minimizers.

\begin{lemma}\label{lem:sum-Ym}
Let $(Y_m)_{m \ge 1}$ be a sequence of non-negative real numbers such that
\begin{align}
\label{hyp:structure}
\sum_{m=1}^{\infty} Y_m \le 1, \quad \text{and} \quad \sum_{m=k+1}^{\infty} Y_m \le \mu Y_k^{\alpha}, \quad \text{for all } k \ge 1.
\end{align}
for some $\mu > 0$ and $\alpha \in \left(0, \frac{1}{2}\right)$. Then, there exists a positive constant $C = C(\alpha) > 0$ such that the following  decay estimate holds
\begin{equation}\label{sum-Ym}
\sum_{m=k}^{\infty} Y_m \le C \mu^{\frac{1}{1-\alpha}} k^{-\frac{\alpha}{1-\alpha}}, \quad \text{for all } k \ge 1.
\end{equation}
\end{lemma}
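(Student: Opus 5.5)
The plan is to work with the tails $T_k:=\sum_{m=k}^{\infty}Y_m$. The hypotheses \eqref{hyp:structure} say that $T_1\le 1$, that $T_k$ is non-increasing, and, since $Y_k=T_k-T_{k+1}$, that
\begin{align*}
T_{k+1}\le \mu\,(T_k-T_{k+1})^{\alpha}, \quad\text{i.e.}\quad T_k-T_{k+1}\ge \mu^{-s}\,T_{k+1}^{s}, \qquad s:=\frac{1}{\alpha}>2 .
\end{align*}
This is a discrete version of the differential inequality $T'\le -\mu^{-s}T^{s}$, whose solutions decay like $(\mu^{s}/k)^{1/(s-1)}=\mu^{\frac{1}{1-\alpha}}k^{-\frac{\alpha}{1-\alpha}}$, which is exactly the rate in \eqref{sum-Ym}. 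Accordingly, I would linearize with the substitution $Z_k:=T_k^{1-s}$ (with $Z_k=+\infty$ once $T_k=0$, in which case everything is trivial). The goal is a uniform increment bound $Z_{k+1}-Z_k\ge c\,\mu^{-s}$, with $c=c(\alpha)>0$.

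For the increment I would split into two cases. If $T_{k+1}\ge \tfrac12 T_k$, the mean value theorem gives
\begin{align*}
Z_{k+1}-Z_k=(s-1)\xi^{-s}(T_k-T_{k+1})
\end{align*}
for some $\xi\in[T_{k+1},T_k]$. Since $\xi\le 2T_{k+1}$, this yields $Z_{k+1}-Z_k\ge (s-1)2^{-s}T_{k+1}^{-s}\mu^{-s}T_{k+1}^{s}=(s-1)2^{-s}\mu^{-s}$. If instead $T_{k+1}<\tfrac12T_k$, then $Z_{k+1}\ge 2^{s-1}Z_k$, so $Z_{k+1}-Z_k\ge (2^{s-1}-1)Z_k\ge (2^{s-1}-1)Z_1\ge 2^{s-1}-1$, using $T_1\le 1$.

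The main obstacle is that the second case produces a constant increment rather than one proportional to $\mu^{-s}$. The two cases combine to $Z_{k+1}-Z_k\ge c\min\{1,\mu^{-s}\}$, and this equals $c\,\mu^{-s}$ only when $\mu\ge 1$. This is a genuine restriction rather than a defect of the method. The example $Y_1=1$, $Y_m=0$ for $m\ge2$ satisfies \eqref{hyp:structure} for every $\mu>0$, yet \eqref{sum-Ym} at $k=1$ would force $1\le C\mu^{\frac{1}{1-\alpha}}$. So for small $\mu$ the estimate can only hold with $\mu$ replaced by $\max\{\mu,1\}$. I would therefore prove the lemma under the normalization $\mu\ge1$ (harmless, since the hypothesis is monotone in $\mu$, and in the application $\mu$ will be a large structural constant), or equivalently state it with $\max\{\mu,1\}$.

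With $\mu\ge1$, summing the increments gives
\begin{align*}
Z_k\ge Z_1+c(k-1)\mu^{-s}\ge c\,k\,\mu^{-s},
\end{align*}
using $Z_1\ge 1\ge\mu^{-s}$ and, if necessary, shrinking $c\le 1$. Inverting, $T_k\le (c k\mu^{-s})^{-\frac{1}{s-1}}=C(\alpha)\,\mu^{\frac{1}{1-\alpha}}k^{-\frac{\alpha}{1-\alpha}}$, which is \eqref{sum-Ym}. The assumption $\alpha<\tfrac12$ is not needed beyond $\alpha<1$; it only makes $s>2$ and hence keeps the constants $(s-1)2^{-s}$ and $2^{s-1}-1$ bounded away from zero.
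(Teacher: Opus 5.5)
Your argument is correct, and so is your objection to the statement as written. The sequence $Y_1=1$, $Y_m=0$ for $m\ge2$ satisfies both hypotheses for every $\mu>0$. Then \eqref{sum-Ym} at $k=1$ would require $1\le C\mu^{\frac{1}{1-\alpha}}$, which is impossible for a fixed $C=C(\alpha)$ as $\mu\to0$.

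The paper's proof fails at exactly this point. It works with the same tails $Z_k=\sum_{m\ge k}Y_m$ and proves the ansatz $Z_k\le C_0k^{-\gamma}$, $\gamma=\frac{\alpha}{1-\alpha}$, by induction. It announces an induction constant $C_0\ge1$ and justifies the base case by $Z_1\le1$. It then fixes $C_0=(2\gamma)^{\gamma}\mu^{\frac{1}{1-\alpha}}$, which lies below $1$ for small $\mu$, and even for $\mu=1$ when $\alpha<\frac13$, since then $(2\gamma)^{\gamma}<1$. The induction step only needs $C_0\ge(2\gamma)^{\gamma}\mu^{\frac{1}{1-\alpha}}$. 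So taking $C_0=\max\{1,(2\gamma)^{\gamma}\mu^{\frac{1}{1-\alpha}}\}$ repairs it and gives your version with $\max\{\mu,1\}$ in place of $\mu$. As you anticipated, this is harmless downstream: in Lemma~\ref{lem-LV-1} one has $\mu=2C\sigma^{-\frac{n-1}{n}}$, and $C$ is free to be enlarged.

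The two routes differ in how they use the discrete inequality $Z_k\ge Z_{k+1}+\mu^{-1/\alpha}Z_{k+1}^{1/\alpha}$. The paper keeps the power-law ansatz and closes the induction step by contradiction. It uses that $t\mapsto t+\mu^{-1/\alpha}t^{1/\alpha}$ is increasing, together with Bernoulli's inequality $(1+\frac1k)^{\gamma}\le1+\frac{\gamma}{k}$; this is where $\gamma<1$, i.e.\ $\alpha<\frac12$, is genuinely needed.

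You instead linearize via $T_k^{1-1/\alpha}$, in the spirit of comparing with $T'=-\mu^{-s}T^{s}$. You get a uniform increment by splitting into comparable consecutive tails (mean value theorem) and a drop by more than half (where $T_1\le1$ enters). The paper's argument is shorter and needs no case distinction. Yours makes visible why the normalization $T_1\le1$ forces $\mu\ge1$, and it works for all $\alpha\in(0,1)$.

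On that last point, your closing remark undersells your own argument. For $\alpha\in[\frac12,1)$ the constants $(s-1)2^{-s}$ and $2^{s-1}-1$ are still positive, which is all that $C=C(\alpha)$ requires. So the restriction $\alpha<\frac12$ plays no role in your proof.
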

\begin{proof}
For each $k \ge 1$, we introduce a sequence denoted by $Z_k := \sum_{m=k}^{\infty} Y_m$. It is readily seen that $(Z_k)_{k \ge 1}$ is a non-negative, monotonically decreasing sequence that converges to $0$ as $k \to \infty$, with $Y_k = Z_k - Z_{k+1}$. In terms of $Z_k$, the structural assumption \eqref{hyp:structure} becomes
\begin{align}\label{eq:ind_step_ineq}
Z_k \ge Z_{k+1} + \mu_1 Z_{k+1}^{\frac{1}{\alpha}} \quad \text{for all } k \ge 1,
\end{align}
where $\mu_1 := \mu^{-\frac{1}{\alpha}} > 0$. By induction, we prove that there exists a sufficiently large constant $C_0 \ge 1$ such that
\begin{align}\label{eq:induction_goal}
Z_k \le C_0 k^{-\gamma} \quad \text{for all } \  k \ge 1, \quad \text{where } \gamma := \frac{\alpha}{1-\alpha} \in (0,1).
\end{align}
For the base case $k=1$, since $Z_1 \le 1$, \eqref{eq:induction_goal} holds true by choosing $C_0 \ge Z_1$. Now, assume that the induction hypothesis \eqref{eq:induction_goal} holds for some $k \ge 1$. By contradiction, assume that $Z_{k+1} > C_0 (k+1)^{-\gamma}$. Since the mapping $t \mapsto t + \mu_1 t^{1/\alpha}$ is strictly increasing for $t > 0$, by substituting this lower bound into \eqref{eq:ind_step_ineq} and noting that $\frac{\gamma}{\alpha} = \gamma + 1$, we infer that
\begin{align*}
Z_k &> C_0(k+1)^{-\gamma} + \mu_1 \left[ C_0(k+1)^{-\gamma} \right]^{\frac{\gamma + 1}{\gamma}} = C_0 (k+1)^{-\gamma} \left[ 1 + \mu_1 C_0^{\frac{1}{\gamma}} (k+1)^{-1} \right].
\end{align*}
To reach a contradiction with the hypothesis $Z_k \le C_0 k^{-\gamma}$, it suffices to show that
\begin{align*}
C_0 (k+1)^{-\gamma} \left[ 1 + \frac{\mu_1 C_0^{\frac{1}{\gamma}}}{k+1} \right] \ge C_0 k^{-\gamma} \iff 1 + \frac{\mu_1 C_0^{\frac{1}{\gamma}}}{k+1} \ge \left( 1 + \frac{1}{k} \right)^\gamma.
\end{align*}
Since $\gamma \in (0,1)$, by employing Bernoulli's inequality, we observe that $\left( 1 + \frac{1}{k} \right)^\gamma \le 1 + \frac{\gamma}{k}$ holds for all $k \ge 1$. Thus, to establish the desired inequality, it suffices to ensure that
\begin{align*}
\frac{\mu_1 C_0^{\frac{1}{\gamma}}}{k+1} \ge \frac{\gamma}{k} \iff C_0 \ge \left[\frac{\gamma(k+1)}{\mu_1 k}\right]^{\gamma}.
\end{align*}
At this stage, we can choose the constant $C_0$ sufficiently large by setting
$$C_0 =  (2\gamma)^{\gamma} \mu^{\frac{1}{1-\alpha}} =  (2\gamma)^{\gamma} \mu^{\gamma+1} = (2\gamma)^{\gamma} \mu_1^{-\gamma} \ge \left[\frac{(k+1)\gamma}{k\mu_1}\right]^{\gamma}.$$ 
This choice directly ensures that $1 + \displaystyle{\frac{\mu_1 C_0^{\frac{1}{\gamma}}}{k+1}} \ge \left( 1 + \frac{1}{k} \right)^\gamma$, which implies $Z_k > C_0 k^{-\gamma}$. This contradicts the induction hypothesis, where $Z_k \le C_0 k^{-\gamma}$. Consequently, we obtain $Z_{k+1} \le C_0 (k+1)^{-\gamma}$, and the induction step is complete. Finally, noting that $\sum_{m=k}^{\infty} Y_m = Z_k \le C_0 k^{-\gamma}$, we conclude that \eqref{sum-Ym} holds with $C = (2\gamma)^{\gamma} > 0$. The proof is now complete.
\end{proof}

\begin{lemma}\label{lem-LV-1}
Let $u \in W^{1,\mathcal{H}}(\Omega)$ be a local minimizer of the multi-phase functional $\mathbb{F}$ defined in \eqref{eq-main} under the assumption \eqref{eq:ai}. Let $B_{2r} \Subset \Omega$ be a concentric ball of radius $2r > 0$, and denote $B := B_r$. Assume that $u \ge 0$ in $2B$ and there exist constants $\nu > 0$ and $\sigma \in (0, 1)$ such that 
\begin{align}\label{asump-Hol}
|\{x \in B: \, u(x) \ge  \nu\}| \ge \sigma |B|,
\end{align}
Then, for every $\tau>0$, there exists $\varepsilon=\varepsilon(\text{data}, \sigma, \tau) \in (0,1)$ such that
\begin{align}\label{level-set}
|\{x \in B: \ u(x) < \varepsilon \nu\}| \le \tau |B|.
\end{align}
\end{lemma}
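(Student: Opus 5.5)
This is the De Giorgi "shrinking lemma" (expansion of positivity in measure). I will run it on the dyadic levels $k_m := 2^{-m}\nu$, $m\ge 0$, and use the truncations $w_m := (k_m - u)_+ = (u-k_m)_-$. Since $u\ge 0$ in $2B$, we have $0\le w_m\le k_m$ in $2B$. Set $A_m := \{x\in B: u(x)<k_m\}$ and $Y_m := |A_m\setminus A_{m+1}|/|B|$, which is the relative measure of $\{k_{m+1}\le u<k_m\}\cap B$. These sets are disjoint, so $\sum_m Y_m\le 1$. The aim is to feed these $Y_m$ into Lemma \ref{lem:sum-Ym} and obtain the decay $|A_k|/|B| \le C\mu^{1/(1-\alpha)} k^{-\alpha/(1-\alpha)}$. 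Given $\tau$, I then choose $k$ so large that the right-hand side is at most $\tau$, and take $\varepsilon = 2^{-k}$.

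The key step is a De Giorgi isoperimetric inequality on each layer. Apply the discrete isoperimetric (Poincaré) inequality in $B$ to the function $\min\{w_{m+1},\,k_{m+1}\}$, i.e.\ to $u$ truncated between the levels $k_{m+1}$ and $k_m$. Hypothesis \eqref{asump-Hol} gives $|\{u\ge \nu\}\cap B|\ge\sigma|B|$, and on that set the truncation vanishes. This yields
\[
k_{m+1}\,|A_{m+1}|\le \frac{C r^{n+1}}{\sigma |B|}\int_{A_m\setminus A_{m+1}}|\nabla w_m|\,dx .
\]
Next I apply H\"older's inequality to bound the right-hand side by $|A_m\setminus A_{m+1}|^{1-1/p}\big(\int_B|\nabla w_m|^p\big)^{1/p}$. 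To control the gradient term I use the Caccioppoli inequality \eqref{Cacci-ineq} on $B\subset 2B$ with $\lambda=k_m$, which gives
\[
\int_B \mathcal H(x,|\nabla w_m|)\,dx \le C\int_{2B}\mathcal H\!\left(x,\frac{k_m}{r}\right)dx = C|2B|\,\Phi_{2B}(k_m/r).
\]

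The main obstacle is the inhomogeneity. If I only keep the $|\nabla w_m|^p$ part, the bound is $\int_B |\nabla w_m|^p\le C|B|\,\Phi_{2B}(k_m/r)$, and $\Phi_{2B}(k_m/r)$ is not comparable to $(k_m/r)^p$. The ratio $\Phi_{2B}(k_m/r)/(k_m/r)^p$ depends on $m$ and is unbounded as $m\to\infty$ when the weights are large. To handle this I would instead work with the modular directly, in the spirit of the proof of Theorem \ref{theo-Linf}. In place of H\"older with exponent $p$, use the weighted H\"older/Jensen estimate of Lemma \ref{lem:general_Jen}, together with the reverse H\"older property of $\mathcal H(\cdot,t)$ from Lemma \ref{lem:Re-Holder}. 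This should give, with $\theta>1$ from those lemmas,
\[
\Phi_{2B}\!\left(\frac{k_{m+1}|A_{m+1}|}{C\sigma^{-1} r|B|}\right)\le C\,Y_m^{\,1-1/\theta}\,\Phi_{2B}\!\left(\frac{k_m}{r}\right).
\]
Because $k_{m+1}=k_m/2$, the scaling Lemma \ref{lem:scale_comparison} and Lemma \ref{lem-coro-phi-1} remove $\Phi_{2B}$ uniformly in $m$, giving
\[
\frac{|A_{m+1}|}{|B|}\le C\sigma^{-1}Y_m^{\alpha},\qquad \alpha=\frac{1-1/\theta}{p_N}.
\]
Decreasing $\theta$ if necessary ensures $\alpha<1/2$. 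This is exactly the structure required in \eqref{hyp:structure}, with $\mu=C\sigma^{-1}$, since $\sum_{j\ge m+1}Y_j\le |A_{m+1}|/|B|$.

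The delicate point is the first display. Making the weighted isoperimetric step rigorous with Muckenhoupt coefficients means the Riesz-potential representation (Lemma \ref{lem-Riesz-I1}) must be restricted to $A_m\setminus A_{m+1}$. If that localization fails, my fallback is to run the classical unweighted $p$-version. That version is valid whenever $\Phi_{2B}(k_m/r)\approx (k_m/r)^p$, which is the $p$-phase regime. In the complementary regime I would use the homogeneity bound $\Phi_{2B}(k_m/r)\le 2^{p_N-p}\Phi_{2B}(k_{m+1}/r)\cdot$const, which should keep the constants $m$-independent in that case as well.
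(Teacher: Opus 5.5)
Your skeleton is the paper's proof: an unweighted De Giorgi isoperimetric inequality on the layer $\{k_{m+1}\le u<k_m\}\cap B$, H\"older with an exponent $\theta$ in place of $p$, Lemma \ref{lem:general_Jen} together with the Caccioppoli inequality \eqref{Cacci-ineq}, Lemma \ref{lem:scale_comparison} to remove $\Phi_{2B}$, and then Lemma \ref{lem:sum-Ym}.

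\textbf{The gap.} The one step that carries the content, your $\Phi_{2B}$-display, is only asserted (``this should give''). Your last paragraph also looks for the difficulty in the wrong place. The isoperimetric step stays unweighted, and you need neither a weighted isoperimetric inequality nor a localized form of Lemma \ref{lem-Riesz-I1}, because the weights only ever act on a constant. Here is the missing derivation. Let $g_m:=\chi_{A_m\setminus A_{m+1}}|\nabla u|$, so that $g_m\le|\nabla w_m|$, and take $\theta\in(1,p]$ as in Lemma \ref{lem:general_Jen}. Apply that lemma on $2B$ (with $g_m$ extended by zero), then \eqref{Cacci-ineq} with $\lambda=k_m$, $\varrho=r$, $R=2r$, and finally $0\le w_m\le k_m$ in $2B$:
\begin{align*}
\Phi_{2B}\Big(\Big(\fint_{2B} g_m^{\theta}\,dx\Big)^{\frac{1}{\theta}}\Big) & \le C\fint_{2B}\mathcal{H}(x,g_m)\,dx \le C\fint_{B}\mathcal{H}(x,|\nabla w_m|)\,dx \\
& \le C\fint_{2B}\mathcal{H}\Big(x,\frac{w_m}{r}\Big)dx \le C\,\Phi_{2B}\Big(\frac{k_m}{r}\Big),
\end{align*}
with $C=C(\mathtt{data})$. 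Lemma \ref{lem:scale_comparison} then gives $(\fint_B g_m^{\theta}\,dx)^{1/\theta}\le C k_m/r$, uniformly in $m$, $\nu$, $r$ and $B$. Insert this, together with $\fint_B g_m\,dx\le Y_m^{1-1/\theta}(\fint_B g_m^{\theta}\,dx)^{1/\theta}$, into your isoperimetric inequality. You get directly $|A_{m+1}|/|B|\le C\sigma^{-1}Y_m^{1-1/\theta}$, which is exactly the recursion the paper feeds into Lemma \ref{lem:sum-Ym}. Your $\Phi_{2B}$-display also follows from this; removing $\Phi_{2B}$ only at the end, as you do, just costs a factor $1/p_N$ in $\alpha$.

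\textbf{The fallback does not work.} Where the weighted phases dominate, unweighted $L^p$-H\"older leaves the factor
\[
\big(\Phi_{2B}(k_m/r)/(k_m/r)^p\big)^{1/p}=\big(1+\textstyle\sum_i(a_i)_{2B}(k_m/r)^{p_i-p}\big)^{1/p}.
\]
The doubling bound $\Phi_{2B}(k_m/r)\le 2^{p_N}\Phi_{2B}(k_{m+1}/r)$ does nothing to remove it. Your diagnosis of this factor is also off: it decreases to $1$ as $m\to\infty$. The real defect is that it is controlled only by $\nu/r$ and $(a_i)_{2B}$, so $\varepsilon$ would not depend on $\mathtt{data},\sigma,\tau$ alone.

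\textbf{Two smaller slips.}
\begin{itemize}
\item The truncation in the isoperimetric step should be $\min\{w_m,k_{m+1}\}$. Since $u\ge0$, $\min\{w_{m+1},k_{m+1}\}=w_{m+1}$, whose gradient lives on $A_{m+1}$ rather than on the layer.
\item Lemma \ref{lem:sum-Ym} controls $\sum_{j\ge k}Y_j=|\{0<u<k_k\}\cap B|/|B|$, which does not see $\{u=0\}\cap B$, so it does not directly bound $|A_k|/|B|$. Finish with $|A_{k+1}|/|B|\le\mu Y_k^{\alpha}\le\mu(\sum_{j\ge k}Y_j)^{\alpha}$. More simply, pick $m<K$ with $Y_m\le 1/K$ and use $|A_K|\le|A_{m+1}|$.
\end{itemize}
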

\begin{proof}
For each $m \in \mathbb{N}$, let us define $\lambda_m = 2^{1-m}\nu$ and consider the following level sets
\begin{align}\notag
\mathcal{D}_m = & \left\{x \in B: \, u(x) \ge \lambda_m\right\}, \ \mathcal{E}_m = \left\{x \in B: \, u(x) < \lambda_{m+1}\right\}, 
\end{align}
and 
\begin{align}\notag
\mathcal{F}_m = \{x \in B:  \, \lambda_{m+1} \le u(x) < \lambda_m\}.
\end{align}
Since the sequence $(\lambda_m)_{m \ge 1}$ is strictly decreasing, the initial assumption \eqref{asump-Hol} directly implies that
\begin{align}\label{Hol-6}
\sigma |B| \le |\{x \in B: \, u(x) \ge  \lambda_1\}| \le |\{x \in B: \, u(x) \ge  \lambda_m\}| = |\mathcal{D}_m|,
\end{align}
for all $m \ge 1$.\\
Next, let us define a new cut-off function $v_m \in W^{1, \mathcal{H}}(B_r)$ given by
\begin{align}\notag
v_m(x) = \begin{cases} \lambda_m - \lambda_{m+1}, \quad & x \in \mathcal{D}_m, \\
u(x) - \lambda_{m+1}, \quad & x \in \mathcal{F}_m, \\
0, \quad & x \in \mathcal{E}_m. \end{cases}
\end{align}
By this definition of $v_m$, one observes that $v_m = \lambda_m - \lambda_{m+1}$ on $\mathcal{D}_m$ and $v_m = 0$ on $\mathcal{E}_m$. Then, it follows that
\begin{align}\label{Hol-5}
(\lambda_m - \lambda_{m+1}) |\mathcal{D}_m|^{\frac{n-1}{n}} = \left( \int_{\mathcal{D}_m} |v_m|^{\frac{n}{n-1}} dx \right)^{\frac{n-1}{n}} \le \left( \int_{B} |v_m|^{\frac{n}{n-1}} dx \right)^{\frac{n-1}{n}}.
\end{align}
On the other hand, according to the classical Sobolev-Poincar\'e's inequality to $v_m$, there exists a constant $C = C(n)>0$ such that
\begin{align}\notag
\left( \int_{B} |v_m|^{\frac{n}{n-1}}  dx \right)^{\frac{n-1}{n}} &\leq \left( \int_{B} |v_m - (v_m)_{B}|^{\frac{n}{n-1}}  dx \right)^{\frac{n-1}{n}} + \left( \int_{B} |(v_m)_{B}|^{\frac{n}{n-1}}  dx \right)^{\frac{n-1}{n}} \notag \\
& \le C(n) \int_{B} |\nabla v_m|  dx + |(v_m)_{B}| |B|^{\frac{n-1}{n}}.\notag
\end{align}
Furthermore, employing H\"older's inequality to estimate $|(v_m)_{B_r}|$, and noting that $v_m = 0$ on $\mathcal{E}_m$, we arrive at
\begin{align}\notag
|(v_m)_{B}| & = \frac{1}{|B|} \left| \int_{B \setminus \mathcal{E}_m} v_m  dx \right|  \leq \frac{1}{|B|} \left( \int_{B} |v_m|^{\frac{n}{n-1}}  dx \right)^{\frac{n-1}{n}} |B \setminus \mathcal{E}_m|^{\frac{1}{n}}. 
\end{align}
Combining the two above inequalities, we obtain
\begin{align}\notag
\left( \int_{B} |v_m|^{\frac{n}{n-1}}  dx \right)^{\frac{n-1}{n}} & \le C(n) \int_{B} |\nabla v_m| dx + \left(\frac{|B \setminus \mathcal{E}_m|}{|B|}\right)^{\frac{1}{n}} \left( \int_{B} |v_m|^{\frac{n}{n-1}}  dx \right)^{\frac{n-1}{n}},
\end{align}
which yields
\begin{align}
\left[ 1 - \left( \frac{|B \setminus \mathcal{E}_m|}{|B|} \right)^{\frac{1}{n}} \right] \left( \int_B |v_m|^{\frac{n}{n-1}}  dx \right)^{\frac{n-1}{n}} \leq C(n) \int_B |\nabla v_m| dx. \label{Hol-3}
\end{align}
Using the following fundamental inequality
\begin{align}\notag 
1 - (1-t)^{\frac{1}{n}} \geq \frac{t}{n}, \quad \mbox{ for all } t \in (0,1),
\end{align}
it allows us to estimate the factor on the left-hand side as follows
\begin{align}\notag
1 - \left( \frac{|B \setminus \mathcal{E}_m|}{|B|} \right)^{\frac{1}{n}} = 1 - \left( 1 - \frac{|\mathcal{E}_m|}{|B|} \right)^{\frac{1}{n}} \geq \frac{1}{n}\cdot \frac{|\mathcal{E}_m|}{|B|}.
\end{align}
Substituting this estimate into \eqref{Hol-3}, we deduce that
\begin{align}
\left( \int_B |v_m|^{\frac{n}{n-1}}  dx \right)^{\frac{n-1}{n}} \leq C(n) \frac{|B|}{|\mathcal{E}_m|} \int_B |\nabla v_m|  dx \le C \frac{|B|}{|\mathcal{E}_m|} \int_{\mathcal{F}_m} |\nabla u|  dx. \label{Hol-4}
\end{align}
Combining \eqref{Hol-6}, \eqref{Hol-5} together with \eqref{Hol-4}, one obtains the following estimate for the measure of the level set
\begin{align}
\frac{|\mathcal{E}_m|}{|B|} & \leq \frac{C}{\lambda_m - \lambda_{m+1}} \sigma^{-\frac{n-1}{n}} |B|^{-\frac{n-1}{n}} \int_{\mathcal{F}_m} |\nabla u|  dx \notag \\
 & \le \frac{C}{\lambda_m - \lambda_{m+1}} \sigma^{-\frac{n-1}{n}} |B|^{-\frac{n-1}{n}} \left(\int_B \chi_{\mathcal{F}_m} |\nabla u|^{\theta}  dx\right)^{\frac{1}{\theta}} |\mathcal{F}_m|^{1-\frac{1}{\theta}} \notag \\
 & \le \frac{Cr}{\lambda_m - \lambda_{m+1}} \sigma^{-\frac{n-1}{n}} \left(\fint_B \chi_{\mathcal{F}_m} |\nabla u|^{\theta} dx\right)^{\frac{1}{\theta}} \left(\frac{|\mathcal{F}_m|}{|B|}\right)^{1-\frac{1}{\theta}}, \label{Hol-7}
\end{align}
where we recall that $|B|^{1/n} = C(n)r$. On the other hand, employing Jensen's inequality and the Caccioppoli inequality \eqref{Cacci-ineq}, it follows that
\begin{align}
\fint_{2B} \mathcal{H}\left(x, \left(\fint_{2B} \chi_{\mathcal{F}_m} |\nabla u|^{\theta} dy\right)^{\frac{1}{\theta}}\right)dx &\le C\fint_{2B} \mathcal{H}\left(x,\chi_{\mathcal{F}_m} |\nabla u| \right)dx \notag \\
&\le C\fint_{B} \mathcal{H}\left(x,|\nabla (\lambda_m - u)_{+}| \right)dx \notag \\
& \le C\fint_{2B} \mathcal{H}\left(x, \frac{(\lambda_m - u)_{+}}{r} \right)dx \notag \\
& \le C\fint_{2B} \mathcal{H}\left(x, \frac{\lambda_m}{r} \right)dx. \label{Hol-2}
\end{align}
We note that the last inequality in \eqref{Hol-2} holds due to the non-negativity of $u$ in $2B$, which implies that $(\lambda_m - u)_+ \le \lambda_m$. Consequently, invoking Lemma \ref{lem:scale_comparison}, we readily deduce from \eqref{Hol-2} that
\begin{align}\notag %\label{Hol-1}
\left(\fint_{B} \chi_{\mathcal{F}_m} |\nabla u|^{\theta} dx \right)^{\frac{1}{\theta}} \le C \frac{\lambda_m}{r}.
\end{align}
Substituting this estimate into \eqref{Hol-7} and noting the cancellation of $r$, one gets
\begin{align}
\frac{|\mathcal{E}_m|}{|B|} & \le \frac{C \lambda_m}{\lambda_m - \lambda_{m+1}} \sigma^{-\frac{n-1}{n}} \left(\frac{|\mathcal{F}_m|}{|B|}\right)^{1-\frac{1}{\theta}} = 2C \sigma^{-\frac{n-1}{n}} \left(\frac{|\mathcal{F}_m|}{|B|}\right)^{1-\frac{1}{\theta}}.\notag %\label{Hol-7_2}
\end{align}
Due to the facts that $\mathcal{E}_k = \bigcup_{m=k+1}^{\infty} \mathcal{F}_m$ and $\bigcup_{m=1}^{\infty} \mathcal{F}_m \subset B$, one has
\begin{align}\notag
\sum_{m=1}^{\infty} \frac{|\mathcal{F}_m|}{|B|} \le 1, \mbox{ and } \sum_{m=k+1}^{\infty}\frac{|\mathcal{F}_m|}{|B|} \le 2C \sigma^{-\frac{n-1}{n}} \left(\frac{|\mathcal{F}_k|}{|B|}\right)^{1-\frac{1}{\theta}}, \ k \ge 1.
\end{align}
Applying Lemma \ref{lem:sum-Ym} with $Y_m = \frac{|\mathcal{F}_m|}{|B|}$, $\alpha = 1-\frac{1}{\theta}$ and $\mu = 2C \sigma^{-\frac{n-1}{n}}$, we obtain
\begin{align}\notag
\frac{|\mathcal{E}_k|}{|B|} \le \sum_{m=k}^{\infty} \frac{|\mathcal{F}_m|}{|B|} \le C \sigma^{-\frac{(n-1)\theta}{n}} k^{1-\theta}, \ \mbox{ for all } k \ge 1.
\end{align}
It is worth noting that $\theta$ can be chosen sufficiently close to 1 such that $\alpha \in \left(0,\frac{1}{2}\right)$ upon applying Lemma \ref{lem:sum-Ym}. As a consequence of this inequality, for any $\tau>0$, it is possible to choose $k$ and $\varepsilon$ such that
\begin{align*}
C \sigma^{-\frac{(n-1)\theta}{n}} k^{1-\theta} < \tau, \quad \text{ and } \quad 0<\varepsilon < \lambda_{k+1},
\end{align*}
and we immediately ensure the set inclusion
\begin{align*}
\frac{|\{x \in B: \, u(x) < \varepsilon \nu\}|}{|B|} \le \frac{|\mathcal{E}_k|}{|B|} < \tau. 
\end{align*}
This allows us to conclude the inequality \eqref{level-set}.
\end{proof}

\subsection{Proof of Theorem \ref{theo:Holder-cont}}

In this section, we shall complete the proof of the main result stated in Theorem \ref{theo:Holder-cont}. 

\begin{proof}[Proof of Theorem \ref{theo:Holder-cont}]
To prove Theorem \ref{theo:Holder-cont}, it suffices to establish the decay of the oscillation of $u$ in smaller balls. Let $r>0$ and $2B = B_{2r} \Subset \Omega$, we define 
$$\nu := \frac{1}{2} \mathrm{osc}_{2B}u = \frac{1}{2}\left(\sup_{2B}u - \inf_{2B} u\right) \quad \text{ and } \quad \mu := \frac{1}{2}\left(\sup_{2B}u + \inf_{2B} u\right).$$
We consider the following two subsets of $B$ given by
\begin{align}\notag
V = \left\{x \in B: \, u(x) \ge \mu\right\}, \mbox{ and } W = \left\{x \in B: \, u(x) < \mu\right\}.
\end{align}
At this stage, since $|V| + |W| = |B|$, it is worth observing that either $|V| \ge \frac{1}{2}|B|$ or $|W| \ge \frac{1}{2}|B|$. Without loss of generality, we may assume that $|V| \ge \frac{1}{2}|B|$. Note that $u \ge \mu$ is equivalent to $u - \inf_{2B} u \ge \nu$. Thus, we get \begin{align}\notag
|\{x \in B:  u(x) - \inf_{2B} u \ge \nu\}| \ge \frac{1}{2}|B|.
\end{align}
This inequality allows us to apply Lemma \ref{lem-LV-1} to the non-negative function $\tilde{u}:= u - \inf_{2B} u$. Then, for every $\tau \in (0,1/2)$, one can find $\varepsilon = \varepsilon(\text{data}) \in (0,1)$ small enough such that
\begin{align}\label{tau-1}
|\{x \in B: \, u(x) - \inf_{2B} u < \varepsilon \nu\}| \le \tau |B|.
\end{align}
Let us now define $\omega := (\varepsilon\nu + \inf_{2B} u - u)_+$. It can be seen that $\omega$ is a non-negative minimizer of \eqref{eq-main}. Moreover, the level-set estimate \eqref{tau-1} implies that the zero set of $\omega$ in $B$ is sufficiently large. Indeed,
\begin{align}\notag
|\{x \in B: \ \omega(x) = 0\}| = |\{x \in B: \ u(x) - \inf_{2B} u \ge \varepsilon\nu\}| \ge (1-\tau)|B| \ge \frac{1}{2} |B|.
\end{align}
Applying the estimate \eqref{Linf-u-new} in Theorem \ref{theo-Linf} to $\omega$, which yields
\begin{align}
\Phi_{2B}\left(\frac{\sup_{\frac{1}{2} B} \omega}{r}\right) & \le C \fint_{B} \mathcal{H}\left(x,\frac{\omega}{r}\right)dx \notag \\
& \le C \left(\frac{|\{x\in B: \, \omega(x)>0\}|}{|B|}\right)^{1-\frac{1}{\theta}} \left[\fint_{B} \mathcal{H}^{\theta}\left(x,\frac{\omega}{r}\right)dx\right]^{\frac{1}{\theta}} \notag  \\
& \le C \left(\frac{|\{x\in B: \, u(x) - \inf_{2B} u < \varepsilon \nu\}|}{|B|}\right)^{1-\frac{1}{\theta}} \left[\fint_{B} \mathcal{H}^{\theta}\left(x,\frac{\sup_{B} \omega}{r}\right)dx\right]^{\frac{1}{\theta}}. \notag
\end{align}
Thanks to \eqref{tau-1} and the reverse H\"older inequality \eqref{phi-eps} in Lemma \ref{lem:Re-Holder}, it follows that
\begin{align}
\Phi_{2B}\left(\frac{\sup_{\frac{1}{2} B} \omega}{r}\right) & \le C  \tau^{1-\frac{1}{\theta}} \fint_{B} \mathcal{H}\left(x,\frac{\sup_{B} \omega}{r}\right)dx \notag \\
& \le C 2^{p_N} \tau^{1-\frac{1}{\theta}} \Phi_{2B}\left(\frac{\varepsilon \nu}{2r}\right).\notag %\label{tau-2}
\end{align}
By choosing $\tau \in (0, 1/2)$ sufficiently small such that the constant $C 2^{p_N} \tau^{1-\frac{1}{\theta}} \le 1$, we deduce that
\begin{align}\label{tau-20}
\Phi_{2B}\left(\frac{\sup_{\frac{1}{2} B} \omega}{r}\right) \le \Phi_{2B}\left(\frac{\varepsilon \nu}{2r}\right).
\end{align}
Applying Lemma \ref{lem:scale_comparison}, one obtains $\sup_{\frac{1}{2} B} \omega \le \frac{\varepsilon}{2} \nu$, which is equivalent to
\begin{align}\notag
\frac{\varepsilon}{2}\left(\sup_{2B}u - \inf_{2B} u\right) + \inf_{2B} u - \inf_{\frac{1}{2} B} u \le \frac{\varepsilon}{4}\left(\sup_{2B}u - \inf_{2B} u\right),
\end{align}
which leads to the following oscillation decay inequality:
\begin{align}\notag %\label{osc-decay}
\mathrm{osc}_{\frac{1}{2} B} u & = \sup_{\frac{1}{2} B} u - \inf_{\frac{1}{2} B} u \\
& \le \left(\sup_{2B} u - \inf_{2B} u\right) + \left(\inf_{2B} u- \inf_{\frac{1}{2} B} u\right) \notag \\
& \le \left(1-\frac{\varepsilon}{4}\right)\left(\sup_{2B}u - \inf_{2B} u\right) \notag \\
& = \left(1-\frac{\varepsilon}{4}\right) \mathrm{osc}_{2B} u.\notag
\end{align}
It is well-known that such an oscillation decay estimate guarantees $u \in C^{0,\alpha}$, where where the H\"older exponent $\alpha \in (0,1)$ is explicitly given by $\alpha = -\log_4 \left(1-\frac{\varepsilon}{4}\right)$. The proof is now complete.
\end{proof}

\section{Hanack inequality for multi-phase functionals}
\label{sec:Harnack}

This section sets forth the proof of Theorem \ref{theo:Harnack}. It can be seen that \eqref{ineq-Harnack} provides a precise quantitative relation between the local infimum and supremum of non-negative minimizers of our multi-phase functional $\mathbb{F}$. It therefore effectively controls the oscillation of the minimizers at small scales, which is a key ingredient in deriving higher regularity estimates. Nevertheless, establishing this inequality is far from trivial due to the transitions between different growth phases (governed by multiple exponents), and, simultaneously, coupled with the singular/degenerate Muckenhoupt coefficients. 

The proof of the Harnack inequality for non-negative local minimizers proceeds in two main steps: the infimum estimate (weak-type Harnack inequality) and a corresponding supremum estimate, detailed in Lemmas \ref{lem-inf-Har} and \ref{lem-sup-Har}, respectively. The desired full Harnack inequality then directly follows by combining these two complementary estimates. 

\subsection{Weak Harnack inequality}
\label{sec:w_Harnack}

We begin by recalling the classical covering lemma, originally due to Krylov and Safonov \cite{KS1981}. In what follows, we rely on the version formulated in terms of balls instead of cubes. For a detailed proof of this specific formulation, we refer the reader to \cite{KS2001}, and to \cite{BaCoMin2015} for further discussions and related applications.

\begin{lemma}[Covering lemma]
\label{lem-E-Br}
Let $B_{2r} \subset \mathbb{R}^n$ be a concentric ball, $\tilde{\delta} \in (0, 1)$, and $E \subset B_{2r}$ be a measurable subset. We define the set 
$$
\mathbb{I}_{\tilde{\delta}} = \left\{(x, \rho) \in B_{2r} \times \left(0, \frac{4r}{3}\right):  |B_{3\rho}(x) \cap E| \ge \tilde{\delta} |B_\rho(x)|.
\right\}.
$$
Let us also consider the following subset of $B_{2r}$:
\begin{align*}
E_{\tilde{\delta}} := \bigcup_{(x,\rho) \in \mathbb{I}_{\tilde{\delta}}} \big[ B_{3\rho}(x) \cap B_{2r}\big].
\end{align*}
Then, either $E_{\tilde{\delta}} = B_{2r}$ or $|E_{\tilde{\delta}}| \ge \frac{1}{2^n \tilde{\delta}} |E|$ holds. 
\end{lemma}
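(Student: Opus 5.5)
The plan is a Vitali-type covering argument combined with the Lebesgue differentiation theorem, in the spirit of the Krylov--Safonov ``ink-spots'' lemma. We may assume $E_{\tilde\delta} \neq B_{2r}$ and, since $E\subset B_{2r}$, also $|E|>0$. The goal is to cover almost all of $E$ by balls $B_{3\rho}(x)$ with $(x,\rho)\in\mathbb{I}_{\tilde\delta}$, chosen so that each has a fixed fraction of its volume in $E_{\tilde\delta}$. We then compare measures.

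\emph{Step 1 (density points).} By the Lebesgue differentiation theorem, almost every $x\in E$ satisfies $|B_\rho(x)\cap E|/|B_\rho(x)|\to 1$ as $\rho\to 0$. For such $x$ and all small $\rho$ we then have $|B_{3\rho}(x)\cap E|\ge|B_\rho(x)\cap E|\ge \tilde\delta|B_\rho(x)|$. So $(x,\rho)\in\mathbb{I}_{\tilde\delta}$ for small $\rho$. We define $\rho_x := \sup\{\rho\in(0,4r/3): (x,\rho)\in \mathbb{I}_{\tilde\delta}\}$.

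\emph{Step 2 (maximality gives the reverse inequality).} I want to find $\rho$ such that $(x,\rho)\in\mathbb I_{\tilde\delta}$ while $|B_{3\rho}(x)\cap E|\le \tilde\delta|B_{\rho}(x)|\cdot c$ for a controlled factor $c$. This is where $E_{\tilde\delta}\ne B_{2r}$ is used. If $\rho_x$ were close to $4r/3$, then $B_{3\rho_x}(x)\supset B_{2r}$ (because $x\in B_{2r}$ and $3\cdot 4r/3=4r$), which would force $E_{\tilde\delta}=B_{2r}$. So $\rho_x$ is bounded away from $4r/3$. By continuity of $\rho\mapsto|B_{3\rho}(x)\cap E|$ and of $|B_\rho|$, at $\rho=\rho_x$ we get equality: $|B_{3\rho_x}(x)\cap E| = \tilde\delta|B_{\rho_x}(x)|$. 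Moreover $(x,\rho_x)\in\mathbb I_{\tilde\delta}$, so $B_{3\rho_x}(x)\cap B_{2r}\subset E_{\tilde\delta}$.

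I expect this step to be the main obstacle, for two reasons. First, one must verify the boundary case at $4r/3$ cleanly; the open interval and the closure need care, possibly by working with $\rho<4r/3$ strictly. Second, one needs a lower bound $|B_{3\rho_x}(x)\cap B_{2r}|\ge c_n|B_{3\rho_x}(x)|$, which holds because $x\in B_{2r}$ and $3\rho_x\le 4r$. Here the constant $2^{-n}$ is produced: a ball of radius $s\le 2\cdot 2r$ centred in $B_{2r}$ meets $B_{2r}$ in measure at least $|B_{s/2}|=2^{-n}|B_s|$, up to an adjustment of the constants.

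\emph{Step 3 (Vitali covering and comparison).} The balls $\{B_{3\rho_x}(x)\}$, indexed by Lebesgue points $x\in E$, have uniformly bounded radii. Vitali's lemma gives a disjoint subfamily $\{B_{3\rho_j}(x_j)\}$ whose fivefold enlargements cover $E$ up to a null set. Instead of enlarging, I would apply Vitali to the balls $B_{\rho_x}(x)$: choose disjoint $B_{\rho_j}(x_j)$ such that $E\subset\bigcup_j B_{3\rho_j}(x_j)$ a.e. This is the standard Vitali (``3r'') covering and matches the constant $3$ built into $\mathbb I_{\tilde\delta}$. Then
\[
|E|\le\sum_j|B_{3\rho_j}(x_j)\cap E|=\tilde\delta\sum_j|B_{\rho_j}(x_j)|.
\]
Since the balls $B_{\rho_j}(x_j)$ are disjoint, $\rho_j\le 4r/3$ and $x_j\in B_{2r}$, each satisfies $|B_{\rho_j}(x_j)\cap B_{2r}|\ge 2^{-n}|B_{\rho_j}(x_j)|$, and $B_{\rho_j}(x_j)\cap B_{2r}\subset E_{\tilde\delta}$. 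Hence
\[
\sum_j|B_{\rho_j}(x_j)|\le 2^n\sum_j |B_{\rho_j}(x_j)\cap B_{2r}|\le 2^n|E_{\tilde\delta}|.
\]
Combining the two displays gives $|E|\le 2^n\tilde\delta|E_{\tilde\delta}|$, which is the claim.
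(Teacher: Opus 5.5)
The paper does not prove this lemma; it cites Kinnunen--Shanmugalingam instead. Your plan is the standard Krylov--Safonov/Vitali argument behind that reference, and it does give the constant $\frac{1}{2^n\tilde\delta}$. The ingredients are: density points; a stopping radius $\rho_x$ with $|B_{3\rho_x}(x)\cap E|=\tilde\delta|B_{\rho_x}(x)|$; a disjoint subfamily of the balls $B_{\rho_x}(x)$ whose triples contain the centres; and the bound $|B_\rho(x)\cap B_{2r}|\ge 2^{-n}|B_\rho(x)|$ for $x\in B_{2r}$, $\rho\le 4r$. Two of your justifications are wrong as written, though both are easily repaired.

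First, in Step 2 your reason for $\rho_x<4r/3$ is false. For $\rho$ merely close to $4r/3$, the inclusion $B_{3\rho}(x)\supset B_{2r}(x_0)$ requires $3\rho\ge 2r+|x-x_0|$. This fails when $x$ is near $\partial B_{2r}$, and $\rho_x$ need not be uniformly bounded away from $4r/3$. What is true, and all you need, is the following. If $\rho_x=4r/3$, the supremum is not attained, so there are $\rho_k\uparrow 4r/3$ with $(x,\rho_k)\in\mathbb{I}_{\tilde\delta}$. Then $E_{\tilde\delta}\supset\bigcup_k \big[B_{3\rho_k}(x)\cap B_{2r}\big]=B_{4r}(x)\cap B_{2r}=B_{2r}$, because $|x-x_0|<2r$. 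So $E_{\tilde\delta}\neq B_{2r}$ forces $\rho_x<4r/3$. Your equality $|B_{3\rho_x}(x)\cap E|=\tilde\delta|B_{\rho_x}(x)|$ then follows, as you say, because the admissible radii form a relatively closed subset of $(0,4r/3)$ and both sides are continuous in $\rho$.

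Second, in Step 3 the ``$3r$'' Vitali lemma, in which the tripled balls cover the \emph{union} of the original balls, is a finite-family statement. For the infinite family $\{B_{\rho_x}(x)\}$ the general lemma only gives an enlargement factor strictly larger than $3$ (usually $5$). What you actually use is that the \emph{centres}, i.e.\ the density points of $E$, are covered, and that does hold with factor $3$. The usual greedy selection guarantees that each $B_{\rho_x}(x)$ meets a chosen $B_{\rho_j}(x_j)$ with $\rho_x\le 2\rho_j$, hence $|x-x_j|<\rho_x+\rho_j\le 3\rho_j$. Alternatively, apply the finite $3r$ lemma to a compact $K\subset E$ of nearly full measure and let $|E\setminus K|\to 0$.

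With these two fixes, your chain $|E|\le\tilde\delta\sum_j|B_{\rho_j}(x_j)|\le 2^n\tilde\delta\,|E_{\tilde\delta}|$ is rigorous. Note that in it the $2^{-n}$ bound is needed for the disjoint balls $B_{\rho_j}(x_j)$, not for $B_{3\rho_x}(x)$ as suggested in your Step 2.
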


\begin{lemma}[Infimum-estimate for non-negative local minimizers]
\label{lem-inf-Har}
Let $r>0$ and a concentric ball $B_{10r} \equiv B_{10r}(x_0) \subset \Omega$. If $u \in W_{\mathrm{loc}}^{1,\mathcal{H}}(\Omega)\cap L_{\mathrm{loc}}^{\infty}(\Omega)$ is a non-negative local minimizer of \eqref{eq-main} under the assumption \eqref{eq:ai}, then there exist a positive exponent $q > 0$ and a constant $C \ge 1$, depending only on $\mathtt{data}$, such that the following infimum-estimate holds:
\begin{align}\label{ineq-weak-Harnack}
\left(\fint_{B_{2r}} u^{q}  dx \right)^{\frac{1}{q}} \le C \inf_{B_r} u.
\end{align}
\end{lemma}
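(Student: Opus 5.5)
Our plan is the Krylov--Safonov/DiBenedetto--Trudinger route, as adapted to double-phase problems in \cite{BaCoMin2015}: turn the measure-to-pointwise information of Lemma \ref{lem-LV-1} into a power decay of the distribution function of $u$ on $B_{2r}$, and then integrate. Two normalizations come first. Replacing $u$ by $u+\epsilon$ (still a non-negative minimizer, since $\mathbb{F}$ only sees $\nabla u$) and letting $\epsilon\to0$ at the end, we may assume $\inf_{B_r}u>0$. All constants will depend only on $\mathtt{data}$, because every earlier estimate is invariant under translating and dilating balls.

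The first step is a two-scale \emph{expansion of positivity} statement. Suppose $B_{\rho}(y)$ satisfies $B_{12\rho}(y)\subset B_{10r}$ and $|\{x\in B_{3\rho}(y): u\ge \nu\}|\ge \tilde\delta\,|B_\rho(y)|$. Then $u\ge \eta\nu$ on $B_{3\rho}(y)$, and also on any fixed larger concentric ball needed below, with $\eta=\eta(\mathtt{data},\tilde\delta)\in(0,1)$.

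To prove it, we apply Lemma \ref{lem-LV-1} on the ball $B_{6\rho}(y)$, with $\sigma\simeq 3^{-n}\tilde\delta$ and a small $\tau$ to be chosen. This gives $|\{u<\varepsilon\nu\}\cap B_{6\rho}|\le\tau|B_{6\rho}|$. Next, $w:=(\varepsilon\nu-u)_+$ is a non-negative function whose positive part obeys the Caccioppoli inequality \eqref{Cacci-ineq}. Rerunning the De Giorgi scheme from the proof of Theorem \ref{theo-Linf}, exactly as in the proof of Theorem \ref{theo:Holder-cont}, yields
\[
\Phi\bigl(\sup_{B_{3\rho}} w/\rho\bigr)\le C\tau^{1-1/\theta}\,\Phi(\varepsilon\nu/\rho).
\]
Here we use \eqref{Linf-u-new} (its zero-set hypothesis holds with $\sigma_0=1-\tau$) together with the reverse H\"older inequality \eqref{phi-eps}. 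Choosing $\tau$ small and applying Lemma \ref{lem:scale_comparison} gives $\sup_{B_{3\rho}}w\le\varepsilon\nu/2$, that is, $u\ge \varepsilon\nu/2$ on $B_{3\rho}(y)$.

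The second step is the \emph{covering iteration}. Fix $t>0$ and set $E_k:=\{x\in B_{2r}: u(x)>t\eta^{k}\}$, so that $E_0\subset E_1\subset\cdots$. We apply Lemma \ref{lem-E-Br} with $E=E_k$ and $\tilde\delta=1/2$ (any fixed $\tilde\delta<2^{-n}$ would also serve). For every $(x,\rho)\in\mathbb{I}_{\tilde\delta}$, the first step with $\nu=t\eta^k$ gives $u\ge t\eta^{k+1}$ on $B_{3\rho}(x)\cap B_{2r}$. Hence $(E_k)_{\tilde\delta}\subset E_{k+1}$, and the covering lemma gives the dichotomy: either $E_{k+1}=B_{2r}$, or $|E_{k+1}|\ge 2^{-n}\tilde\delta^{-1}|E_k|=:\kappa|E_k|$ with $\kappa>1$ once $\tilde\delta<2^{-n}$. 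If $|E_0|>\kappa^{-m}|B_{2r}|$, then after at most $m$ steps we reach $E_{m}=B_{2r}$, so $\inf_{B_r}u\ge t\eta^{m+1}$. Reading this backwards yields
\[
|\{x\in B_{2r}: u>t\}|\le C\Bigl(\frac{\inf_{B_r}u}{t}\Bigr)^{\gamma}|B_{2r}|,\qquad \gamma=\frac{\log\kappa}{\log(1/\eta)}.
\]
Finally, for any $q<\gamma$, the layer-cake formula $\fint u^q=q\int_0^\infty t^{q-1}|\{u>t\}|\,dt/|B_{2r}|$ (split at $t=\inf_{B_r}u$) gives \eqref{ineq-weak-Harnack}.

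The main obstacle is the geometric bookkeeping in the first step. The balls $B_{3\rho}(x)$ with $x\in B_{2r}$ and $\rho<4r/3$ can reach out to radius about $6r$. Applying the first step there requires Lemma \ref{lem-LV-1} and the local boundedness estimate on balls of radius about $12\rho$, and we must check that these stay inside $B_{10r}$. This is the reason for the hypothesis $B_{10r}\subset\Omega$, and it may force us to shrink the admissible $\rho$-range or to use $\tilde\delta$-dependent enlargements. We also have to make sure the constant $\eta$ is uniform: it must not depend on the ratio of $\Phi$ evaluated on different balls. This holds because $\Phi_{B}$ enters only through the comparison Lemma \ref{lem:scale_comparison}, whose constants do not depend on $B$.
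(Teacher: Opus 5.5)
Your argument is the paper's argument in different packaging. Both use expansion of positivity from Lemma \ref{lem-LV-1} combined with \eqref{Linf-u-new} applied to $(\varepsilon\nu-u)_+$, then the Krylov--Safonov dichotomy of Lemma \ref{lem-E-Br}, then a layer-cake integration. The paper fixes $\lambda$, introduces the dyadic index $m_\lambda$ and proves $\inf_{B_{2r}}u\ge\mu^{m_\lambda}\lambda$; you fix $t$ and derive the equivalent power decay of $|\{u>t\}\cap B_{2r}|$. Two points need repair. The first is a slip: $\tilde\delta=1/2$ does not work, because Lemma \ref{lem-E-Br} then only gives $|E_{k+1}|\ge 2^{1-n}|E_k|$, and $2^{1-n}<1$ for $n\ge2$. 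You need $\tilde\delta<2^{-n}$, as your own parenthetical says. The paper takes $\tilde\delta=2^{-(n+1)}$, so that $\kappa=2$.

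The second is a genuine missing step. The geometric issue you flag is real, and your proposal leaves it open. Neither suggested remedy is worked out, and restricting the $\rho$-range is not directly compatible with Lemma \ref{lem-E-Br}, which is formulated for all $\rho<4r/3$. Your first step on $B_{3\rho}(x)$ requires Lemma \ref{lem-LV-1} on $B_{6\rho}(x)$, hence $B_{12\rho}(x)\Subset\Omega$. For $x\in B_{2r}(x_0)$, the hypothesis $B_{10r}\subset\Omega$ guarantees this only when $\rho\le 2r/3$. The paper's proof glosses over the same point: it only records $B_{6\rho}(x)\subset B_{10r}$, although the argument it imitates (the one leading to \eqref{eps3-lambda}) reaches out to twice that radius.

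The fix is a case split. If $\rho\le 2r/3$, use your first step as written. If $2r/3<\rho<4r/3$, then $|E_k|\ge|B_{3\rho}(x)\cap E_k|\ge\tilde\delta|B_\rho(x)|\ge\tilde\delta\,6^{-n}|B_{4r}|$. You can then run the paper's case $m_\lambda=1$ on the fixed balls:
apply Lemma \ref{lem-LV-1} on $B_{4r}(x_0)$, which needs only $B_{8r}\Subset B_{10r}$, with the same $\sigma=6^{-n}\tilde\delta$ as in your first step;
then apply \eqref{Linf-u-new} to $(\varepsilon t\eta^k-u)_+$ on $B_{2r}$, with doubled ball $B_{4r}$.
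This yields $u\ge t\eta^{k+1}$ on all of $B_{2r}$ with the same $\eta$. So $(E_k)_{\tilde\delta}\subset E_{k+1}$ holds in both cases, after writing $\ge$ instead of $>$ in the definition of $E_k$, and the rest of your proof goes through.
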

\begin{proof}
For every $\lambda>0$, it is possible to find a unique integer $m_{\lambda} \ge 1$ such that
\begin{align*}
m_\lambda := \min \left\{ m \in \mathbb{N} :
\frac{|\{y \in B_{2r} : u(y) > \lambda\}|}{|B_{2r}|}
\le 2^{-(m-1)} \right\}.
\end{align*}
Then,
\begin{align}\label{m-lam}
\left(\frac{1}{2}\right)^{m_{\lambda}}  < \frac{|\{y \in B_{2r}: \, u(y) > \lambda\}|}{|B_{2r}|} \le \left(\frac{1}{2}\right)^{m_{\lambda}-1}.
\end{align}
With $m_{\lambda}$ given in \eqref{m-lam}, we aim to establish the following lower bound for the infimum:
\begin{align}\label{u-m-lambda}
\lambda_0:= \inf_{B_{2r}} u \ge \mu^{m_{\lambda}} \lambda,
\end{align}
where $\mu \in (0,1)$ is a small constant depending only on $\mathtt{data}$, to be specified later. Before verifying \eqref{u-m-lambda}, let us show how this estimate implies the desired weak Harnack inequality \eqref{ineq-weak-Harnack}. Indeed, by setting $\alpha := \log_{1/2} \mu > 0$, a combination of \eqref{m-lam} and \eqref{u-m-lambda} yields
\begin{align*}
\frac{\lambda_0}{\lambda} \ge \mu^{m_{\lambda}} = \left[\left(\frac{1}{2}\right)^{m_{\lambda}}\right]^{\alpha} \ge \left[\frac{1}{2} \frac{|\{y \in B_{2r}: \, u(y) > \lambda\}|}{|B_{2r}|}\right]^{\alpha}.
\end{align*}
Using Fubini's theorem and choosing an exponent $0< q < \frac{1}{\alpha}$, one obtains
\begin{align*}
\left(\fint_{B_{2r}} u^{q}  dx \right)^{\frac{1}{q}} & = \left[\frac{1}{|B_{2r}|}\int_{0}^{\infty} q\lambda^{q-1} |\{y \in B_{2r}: \, u(y) > \lambda\}| \, d\lambda \right]^{\frac{1}{q}} \\
& \le \left[\lambda_0^q + \int_{\lambda_0}^{\infty} q\lambda^{q-1} \frac{|\{y \in B_{2r}: \, u(y) > \lambda\}|}{|B_{2r}|} \, d\lambda \right]^{\frac{1}{q}} \\
& \le \left[\lambda_0^q + 2q\lambda_0^{\frac{1}{\alpha}} \int_{\lambda_0}^{\infty} \lambda^{q-1-\frac{1}{\alpha}} \, d\lambda \right]^{\frac{1}{q}} \\
& \le \left(\frac{1+\alpha q}{1-\alpha q}\right)^{\frac{1}{q}} \lambda_0,
\end{align*}
which allows us to conclude \eqref{ineq-weak-Harnack}, keeping in mind that $\lambda_0 = \inf_{B_{2r}} u \le \inf_{B_r} u$. Thus, the main objective of this entire proof is to establish inequality \eqref{u-m-lambda}. 

Now, we distinguish two possible cases: $m_{\lambda} = 1$ and $m_{\lambda} > 1$. In the first case, whenever $m_{\lambda} = 1$, inequality \eqref{m-lam} immediately yields
\begin{align}\label{B-4r-est}
|\{y \in B_{2r}: \, u(y) > \lambda\}| > \frac{1}{2} |B_{2r}|.
\end{align}
Since $B_{2r} \subset B_{4r}$ and $|B_{4r}| = 2^n |B_{2r}|$, we can extend this estimate to the larger ball $B_{4r}$ as follows:
\begin{align}\notag
|\{y \in B_{4r}: \, u(y) > \lambda\}| \ge |\{y \in B_{2r}: \, u(y) > \lambda\}| > \frac{1}{2} |B_{2r}| \ge \frac{1}{2^{n+1}} |B_{4r}|.
\end{align} 
At this stage, it allows us to apply Lemma \ref{lem-LV-1} with $B = B_{4r}$, $\sigma = \frac{1}{2^{n+1}}$ and $\tau =\frac{1}{2}$, there exists a small constant $\varepsilon_1 = \varepsilon_1(\mathtt{data}) \in (0,1)$ such that
\begin{align}\notag %\label{u2B}
|\{y \in B_{4r}: \, u(y) > \varepsilon_1\lambda\}| \le \frac{1}{2} |B_{4r}|.
\end{align}
We introduce the localized truncation function $\omega := (\varepsilon_1 \lambda - u)_{+}$, this inequality ensures that
\begin{align}\notag 
|\{y \in B_{4r}: \, \omega(y) = 0\}| & = |\{y \in B_{4r}: \, u(y) \ge \varepsilon_1\lambda\}|  > \frac{1}{2} |B_{4r}|.
\end{align}
By the same argument as in the proof of \eqref{tau-20} applied to the non-negative function $\omega$, we obtain $\sup_{B_{2r}} \omega \le \frac{\varepsilon_1}{2} \lambda$. By the definition of $\omega$, it follows that
$\varepsilon_1 \lambda - \inf_{B_{2r}} u \le \frac{\varepsilon_1}{2}  \lambda$, which readily simplifies to
\begin{align}\label{eps3-lambda}
\inf_{B_{2r}} u \ge \frac{\varepsilon_1}{2} \lambda.
\end{align}
Therefore, inequality \eqref{u-m-lambda} holds for the case $m_{\lambda} = 1$ by choosing the structural paramater $\mu \in (0,\frac{\varepsilon_1}{2})$ in \eqref{eps3-lambda}.

For the remaining case $m_{\lambda} > 1$, the index set $J_{\lambda} := \{1,2,...,m_{\lambda}-1\}$ is then not empty. For each $i \in J_{\lambda}$, we define the following two subsets of $B_{2r}$:
\begin{align}\notag
E := \left\{y \in B_{2r}: \, u(y) \ge \mu^{i-1}\lambda\right\}  \mbox{ and }
 E_{\tilde{\delta}} := \bigcup_{\substack{(x,\rho) \in \mathbb{I}_{\tilde{\delta}}}} \big[ B_{3\rho}(x) \cap B_{2r}\big],
\end{align}
where $\tilde{\delta} = \frac{1}{2^{n+1}}$ and the index set $\mathbb{I}_{\tilde{\delta}}$ is determined as in Lemma \ref{lem-E-Br}. Let us now show that
\begin{align}\label{inf-u-eps}
\inf_{B_{3\rho}(x)} u \ge \mu^{i}\lambda, \ \mbox{ for every } (x,\rho) \in \mathbb{I}_{\tilde{\delta}}.
\end{align}
Indeed, for each pair $(x,\rho) \in \mathbb{I}_{\tilde{\delta}}$ and by the definition of $\mathbb{I}_{\tilde{\delta}}$, one has
\begin{align}\notag
|\{y \in B_{3\rho}(x): \, u(y) \ge \mu^{i-1}\lambda\}| & \ge |B_{3\rho}(x) \cap E|  \ge \tilde{\delta} |B_{\rho}(x)| \ge \frac{\tilde{\delta}}{3^n} |B_{3\rho}(x)|.
\end{align}
We recall that \eqref{eps3-lambda} can be proved under condition \eqref{B-4r-est}. In the same spirit, noting that $B_{6\rho}(x) \subset  B_{10r} = B_{10r}(x_0)$, we can also find a constant $\varepsilon_2 = \varepsilon_2(\mathtt{data}) \in (0,1)$ such that 
$$\inf_{B_{3\rho}(x)} u \ge \frac{\varepsilon_2}{2} \mu^{i-1} \lambda = \frac{\varepsilon_2}{2\mu} \mu^{i} \lambda.$$
As a consequence, inequality \eqref{inf-u-eps} holds by choosing $0< \mu \le \frac{1}{2}\min\left\{\varepsilon_1, \varepsilon_2\right\}$. By \eqref{inf-u-eps}, it is obvious that 
\begin{align}\label{E-subset-B}
E_{\tilde{\delta}} \subset \{y \in B_{2r}: \, u(y) \ge \mu^i \lambda\} \subset B_{2r}.
\end{align}
On the other hand, Lemma \ref{lem-E-Br} ensures that for each $i \in J_{\lambda}$, either
\begin{align*}
E_{\tilde{\delta}} = B_{2r} \quad \text{ or } \quad |E_{\tilde{\delta}}| \ge 2 \left|\left\{y \in B_{2r}:  u(y) \ge \mu^{i-1}\lambda\right\}\right|
\end{align*}
holds. Therefore, taking the inclusion \eqref{E-subset-B} into account, we deduce that either
\begin{align}\label{TH1}
\{y \in B_{2r}: \, u(y) \ge \mu^i \lambda\} = B_{2r}
\end{align}
or
\begin{align}\label{TH2}
|\{y \in B_{2r}: \, u(u) \ge \mu^i \lambda\}| \ge 2 \left|\left\{y \in B_{2r}: \, u(y) \ge \mu^{i-1}\lambda\right\}\right|
\end{align}
hold. If there exists some index $i_0 \in J_{\lambda}$ such that \eqref{TH1} is satisfied, it follows that $u(y) \ge \mu^{i_0} \lambda \ge \mu^{m_{\lambda}} \lambda$ for almost everywhere $y \in B_{2r}$. Thus, \eqref{u-m-lambda} holds in this case. Otherwise, i.e., if \eqref{TH2} holds for all $i \in J_{\lambda}$, then by \eqref{m-lam} we have
\begin{align}\notag
|\{y \in B_{2r}: \ u(y) \ge \mu^{m_{\lambda}-1} \lambda\}| & \ge 2 \left|\left\{y \in B_{2r}: \ u(y) \ge \mu^{m_{\lambda}-2}\lambda\right\}\right| \\
& \ge 2^{m_{\lambda}-1} \left|\left\{y \in B_{2r}: \, u(y) \ge \lambda\right\}\right| \notag \\
& \ge \frac{1}{2} |B_{2r}|.\notag
\end{align}
Once again, this inequality still ensures the existence of $\varepsilon_3 = \varepsilon_3(\mathtt{data}) \in (0,1)$ such that 
$$\inf_{B_{2r}} u \ge \frac{\varepsilon_3}{2} \mu^{m_{\lambda}-1} \lambda.$$ 
Hence, by choosing the constant $\mu$ sufficiently small such that 
$$0< \mu \le \frac{1}{2}\min\left\{\varepsilon_1, \varepsilon_2, \varepsilon_3\right\},$$ 
we arrive at $\inf_{B_{2r}} u \ge \mu^{m_{\lambda}} \lambda$, which completes the proof of \eqref{u-m-lambda}. The proof is now complete.
\end{proof}

\subsection{The supremum estimate of Harnack inequality}

The local supremum estimate for non-negative local minimizers is established in the following lemma.
\begin{lemma}[Sup-estimate for non-negative local minimizers]
\label{lem-sup-Har}
Let $r > 0$ and let $B_{2r} = B_{2r}(x_0) \Subset \Omega$. If $u \in W_{\mathrm{loc}}^{1,\mathcal{H}}(\Omega) \cap L_{\mathrm{loc}}^{\infty}(\Omega)$ is a non-negative local minimizer of \eqref{eq-main}, then for every $q \in (0,p)$, there exists a constant $C = C(\mathtt{data}) \ge 1$ such that the following estimate holds:
\begin{align}\label{ineq-sup-Harnack}
\sup_{B_r} u \le C \left( \fint_{B_{2r}} u^{q}  dx \right)^{\frac{1}{q}}.
\end{align}
\end{lemma}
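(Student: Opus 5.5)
Since $u\ge 0$, the natural plan is to obtain a supremum bound in terms of an $L^{1}$-type energy and then lower the exponent to an arbitrary $q\in(0,p)$ by a covering and interpolation argument.

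\emph{Step 1: sup bound via the energy.} Apply the De Giorgi scheme of Theorem \ref{theo-Linf}, but to the truncations $(u-\lambda_m)_+$ with $\lambda_m=L(1-2^{-m})$ and no subtraction of $(u)_{2B}$. The iteration proceeds exactly as before: use \eqref{Cacci-ineq}, the Sobolev--Poincar\'e inequality \eqref{SP-ineq} applied to $\eta(u-\lambda_m)_+$, which vanishes on the boundary so no mean needs to be subtracted, the Jensen bound \eqref{phi-phi}, and Lemma \ref{lem:DeGiorgi}. This should yield, for concentric balls $B_\rho\subset B_R\subset B_{2r}$, the estimate
\begin{align*}
\Phi_{B_R}\left(\frac{\sup_{B_\rho}u}{R}\right)\le C\left(\frac{R}{R-\rho}\right)^{\kappa}\fint_{B_R}\mathcal{H}\left(x,\frac{u}{R}\right)dx,
\end{align*}
with $\kappa=\kappa(\mathtt{data})$. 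The polynomial dependence on $R/(R-\rho)$ comes from tracking $2^{m}(R-\rho)^{-1}$ in place of $2^m/r$.

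\emph{Step 2: from the modular to the sup.} Since $0\le u\le M:=\sup_{B_R}u$, we have pointwise
\begin{align*}
\mathcal{H}\left(x,\frac{u}{R}\right)\le \left(\frac{u}{M}\right)^{p}\mathcal{H}\left(x,\frac{M}{R}\right),
\end{align*}
because each term $t^{p_i}$ with $p_i\ge p$ scales at least like $t^p$ for $t\le1$. Writing $u^p\le M^{p-q}u^q$ and applying H\"older with the reverse H\"older exponent $\theta$ of Lemma \ref{lem:Re-Holder}, we get
\begin{align*}
\fint_{B_R}\mathcal{H}\left(x,\frac{u}{R}\right)dx\le C\,\Phi_{B_R}\left(\frac{M}{R}\right)\left(\fint_{B_R}\left(\frac{u}{M}\right)^{q\theta'}dx\right)^{1/\theta'}.
\end{align*}
To keep the exponent at $q$ rather than $q\theta'$, use $u/M\le1$ and take the exponent $\min\{q,\,p\}$ inside, starting with $q\theta'$ replaced by $q$ after interpolation with $p\theta'$ if needed. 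Combining this with Step 1, Lemma \ref{lem:scale_comparison}, and the doubling of $\Phi$ between $B_R$ and $B_{2r}$ (guaranteed by the $A_\infty$ doubling of the $a_i$) gives
\begin{align*}
\frac{\sup_{B_\rho}u}{M}\le C\left(\frac{R}{R-\rho}\right)^{\kappa'}\left(\fint_{B_R}\left(\frac{u}{M}\right)^{q}dx\right)^{\gamma}
\end{align*}
for some $\gamma=\gamma(\mathtt{data},q)>0$. Rearranging, and using Young's inequality, this becomes
\begin{align*}
\sup_{B_\rho}u\le \tfrac12\sup_{B_R}u+C(R-\rho)^{-\kappa''}r^{\kappa''}\left(\fint_{B_{2r}}u^q\,dx\right)^{1/q}.
\end{align*}

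\emph{Step 3: iteration.} The standard iteration lemma over radii $r\le\rho<R\le 2r$ (as in the proof of Lemma \ref{lem-Cacci}) absorbs the $\tfrac12\sup$ term and yields \eqref{ineq-sup-Harnack}. The main obstacle is Step 2. Because the ratio $\Phi_{B_R}(M/R)\big/\Phi_{B_R}(s)$ is not a pure power, one must carefully extract a homogeneous factor $(u/M)^{\text{power}}$. The fact that $p$ is the smallest exponent is what makes this work, and it explains the restriction $q<p$. I would first try to keep everything relative to $M$, which makes all arguments of $\mathcal{H}$ comparable, and invoke Lemma \ref{lem:scale_comparison} only at the end.
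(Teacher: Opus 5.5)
Your proposal is correct and follows essentially the same route as the paper. Both arguments run a De Giorgi iteration for $(u-\lambda_m)_+$ between shrinking concentric balls, bounding the supremum by $\Phi^{-1}$ of the averaged modular, then use reverse H\"older together with $u\le M$ to extract a factor $\bigl(\fint u^q/M^q\bigr)^{\text{power}}$, apply $\Phi^{-1}(s\Phi(t))\le s^{1/p_N}t$, and finish with Young's inequality and the standard radius iteration. The differences are cosmetic: you keep the scaling $u/R$ inside $\mathcal{H}$, which is a cleaner version of the paper's first step, and you apply Lemma \ref{lem:Re-Holder} to $\mathcal{H}(\cdot,M/R)$ instead of reverse H\"older to each $a_i$; also, passing from $q\theta'$ to $q$ needs no interpolation, only $(u/M)^{q\theta'}\le (u/M)^q$.
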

\begin{proof} 
We proceed with an iteration procedure analogous to the proof of Theorem \ref{theo-Linf}, with necessary modifications. For any fixed radii $1 \le \sigma < \varpi \le 2$ and for each $m \in \mathbb{N}_0$, let us define
\begin{align}\notag
r_m = r \left(\sigma+\frac{\varpi -\sigma}{2^m}\right), \quad B_m = B_{r_m}, \quad \lambda_m = L\left(1-\frac{1}{2^m}\right), \\
v_m = (u  - \lambda_m)_{+}, \quad \text{ and } \quad S_m = \left\{x \in B_{m}: \ v_m(x)>0\right\},\notag
\end{align}
where $L > 0$ is a large scaling parameter to be specified later. Thanks to H\"older's inequality together with the Sobolev-Poincar\'e exponent $\theta > 1$ determined from \eqref{SP-ineq}, we obtain:
\begin{align}\label{sup-Har-est-1}
U_m := \fint_{B_{m}} \mathcal{H}\left(x,v_m\right)dx  \le \left[\fint_{B_{m}} \mathcal{H}^\theta \left(x,v_m\right)  dx \right]^{\frac{1}{\theta}} \left[\frac{|S_{m}|}{|B_{m}|}\right]^{1 - \frac{1}{\theta}}.
\end{align}
By following the same lines as in the proof of Theorem \ref{theo-Linf}, one can bound the two terms on the right-hand side of \eqref{sup-Har-est-1} by the similar technique as in \eqref{app:SP-3} and \eqref{app:SP-4}. Indeed, let $\eta \in C_c^{\infty}(B_{r_{m-\frac{1}{2}}})$ with $r_{m-\frac{1}{2}} = \frac{1}{2}\left(r_{m-1}+r_{m}\right)$, be a cut-off function satisfying
\begin{align}\notag
0 \le \eta \le 1, \quad \eta \equiv 1 \text{ on } B_{m}, \quad \text{and} \quad |\nabla \eta| \le \frac{4}{2^{-m}(\varpi -\sigma)r}.
\end{align}
Let $\beta$ be a small scaling number, introduced to guarantee that 
\begin{align}\label{beta-choice}
2^{-m}(\varpi -\sigma)r \le \beta \le r_{m-\frac{1}{2}}.
\end{align}
 Applying Sobolev-Poincar\'e's inequality \eqref{SP-ineq} with the function $\beta\eta v_m \in W_0^{1,\mathcal{H}}(B_{m-\frac{1}{2}})$, one obtains that
\begin{align}\label{estsup:SP-1}
\left[\fint_{B_{m}} \mathcal{H}^\theta\left(x, v_m\right)  dx \right]^{\frac{1}{\theta}} & \le C \left[\fint_{B_{m-\frac{1}{2}}} \mathcal{H}^\theta\left(x, \eta v_m\right)  dx \right]^{\frac{1}{\theta}} \notag \\
& \le C \left(\frac{r_{m-\frac{1}{2}}}{\beta}\right)^{p_N} \left[\fint_{B_{m-\frac{1}{2}}} \mathcal{H}^\theta\left(x, \frac{\beta\eta v_m}{r_{m-\frac{1}{2}}}\right)  dx \right]^{\frac{1}{\theta}} \notag \\
 & \le C \left(\frac{r_{m-\frac{1}{2}}}{\beta}\right)^{p_N} \fint_{B_{m-\frac{1}{2}}} \mathcal{H}(x, |\nabla (\beta\eta v_m)|) dx,
\end{align}
where $C = C(\mathtt{data}) > 0$. Moreover, noting that 
$$2^{-m}(\varpi -\sigma)r = 2\left(r_{m-1} - r_{m-\frac{1}{2}}\right),$$ 
the Caccioppoli inequality \eqref{Cacci-ineq} yields that
\begin{align}\label{estsup:SP-2}
\fint_{B_{m-\frac{1}{2}}} \mathcal{H}(x, |\nabla (\beta\eta v_m)|)  dx  & \le C \fint_{B_{m-\frac{1}{2}}} \mathcal{H}(x, |\nabla (\beta v_m)|)  dx \notag \\
& \qquad \qquad +  C\fint_{B_{m-\frac{1}{2}}} \mathcal{H}\left(x, \frac{4\beta v_m}{2^{-m}(\varpi -\sigma)r}\right)  dx \notag \\
& \le C \fint_{B_{m-1}} \mathcal{H}\left(x, \frac{\beta v_m}{r_{m-1} - r_{m-\frac{1}{2}}}\right)  dx \notag \\
& \qquad \qquad + \fint_{B_{m-\frac{1}{2}}} \mathcal{H}\left(x, \frac{4\beta v_m}{2^{-m}(\varpi -\sigma)r}\right)  dx \notag \\
& \le C \left(\frac{\beta}{2^{-m}(\varpi -\sigma)r}\right)^{p_N} \fint_{B_{m-1}} \mathcal{H}\left(x, v_{m-1}\right)  dx.
\end{align}
We note that the last inequality in \eqref{estsup:SP-2} is derived from Lemma \ref{lem:scaling_H} under constraint \eqref{beta-choice}. Combining \eqref{estsup:SP-1} and \eqref{estsup:SP-2}, one obtains
\begin{align}\label{estsup:SP-3}
\left[\fint_{B_{m}} \mathcal{H}^\theta\left(x, v_m\right)  dx \right]^{\frac{1}{\theta}} & \le C 2^{(m+3)p_N}  \fint_{B_{m-1}} \mathcal{H}\left(x, v_{m-1}\right)  dx \notag \\
& \le C \frac{2^{mp_N}}{(\varpi -\sigma)^{p_N}}  U_{m-1}.
\end{align}
To estimate the level set $S_m$, one can verify that $v_{m-1}(x) > 2^{-m}L \chi_{S_m}(x)$, for every $x \in S_m$. It implies that
\begin{align}\notag
U_{m-1}  & \ge \frac{1}{|B_{m-1}|} \int_{S_{m}} \mathcal{H}\left(x,v_{m-1}\right)dx \ge \frac{|B_{m}|}{|B_{m-1}|} \fint_{B_{m}} \mathcal{H}\left(x,2^{-m}L \chi_{S_m}(x)\right)dx.\notag
\end{align}
It enables us to apply the Jensen-type inequality \eqref{phi-phi} in Lemma \ref{lem-Jensen} to deduce that
\begin{align}\notag
U_{m-1} \ge C \fint_{B_{m}} \mathcal{H}\left(x, 2^{-m}L\frac{|S_m|}{|B_m|}\right)dx \ge C \Phi_{2B}\left(2^{-m}L\frac{|S_m|}{|B_m|}\right).
\end{align}
By Lemma \ref{lem:scale_comparison}, this inequality yields
\begin{align}
\frac{|S_m|}{|B_m|} \le C\Phi_{2B}^{-1}\left(U_{m-1}\right) 2^{m} L^{-1}.\label{estsup:SP-4}
\end{align}
Substituting \eqref{estsup:SP-3} and \eqref{estsup:SP-4} into \eqref{sup-Har-est-1}, this yields
\begin{align}
U_m & \le  \frac{C_0 {L}^{-\left(1-\frac{1}{\theta}\right)}}{(\varpi-\sigma)^{p_N}}  2^{m\left(p_N + 1-\frac{1}{\theta}\right)}  U_{m-1} \left[\Phi_{\varpi B_{r}}^{-1}\left(U_{m-1}\right) \right]^{1-\frac{1}{\theta}}. \notag
\end{align}
Applying Lemma \ref{lem:DeGiorgi}, we deduce that $\lim_{m \to \infty} U_m = 0$ provided we select $L>0$ such that
$$L = \frac{1}{(\varpi-\sigma)^\frac{\theta p_N}{\theta-1}} C_0^{\frac{\theta}{\theta-1}} 2^{\left(1+\frac{p_N\theta}{\theta-1}\right)^2} \Phi_{\varpi B_{r}}^{-1}(U_0).$$
Since $\lim_{m \to \infty} U_m = 0$, one concludes $\sup_{\sigma B_{r}} u \le L$, from which follows that 
\begin{align}
\sup_{\sigma B_{r}} u & \le \frac{C}{(\varpi-\sigma)^\frac{\theta p_N}{\theta-1}} \Phi_{\varpi B_{r}}^{-1}(U_0) \notag \\
& \le \frac{C}{(\varpi-\sigma)^\frac{\theta p_N}{\theta-1}} \Phi_{\varpi B_{r}}^{-1}\left(\fint_{\varpi B_{r}} \mathcal{H}\left(x,u\right) dx\right). \label{H-x-u-1}
\end{align}
For the sake of simplicity, we introduce the notations $p_0 = p$ and $a_0 \equiv 1$. Consequently, $a_i \in A_{p_i}$ for all $i \in \{0,1,\dots,N\}$. Thanks to Remark \ref{rmk:self-improve}, there exists an exponent $\gamma_i > 1$ such that $a_i \in RH_{\gamma_i}$. By setting $\gamma = \min_{0 \le i \le N} \gamma_i > 1$, we ensure that $a_i \in RH_{\gamma}$ for all $i \in \{0,1,\dots,N\}$. This yields the existence of a structural constant $C_{RH} \ge 1$ such that
\begin{align*}
\left(\fint_{\varpi B_{r}} [a_i(x)]^{\gamma} dx \right)^{\frac{1}{\gamma}} \le C_{RH} \fint_{\varpi B_{r}} a_i(x) dx = C_{RH} (a_i)_{\varpi B_{r}}.
\end{align*}
Combining this fact with H\"older's inequality, for every $q \in (0,p)$, one can estimate as follows
\begin{align}
\fint_{\varpi B_{r}} a_i(x) u(x)^{p_i} dx & \le \big(\sup_{\varpi B_{r}} u\big)^{p_i-q} \fint_{\varpi B_{r}} a_i(x) u(x)^q dx \notag\\
& \le \big(\sup_{\varpi B_{r}} u\big)^{p_i-q} \left(\fint_{\varpi B_{r}} [a_i(x)]^{\gamma} dx \right)^{\frac{1}{\gamma}}  \left(\fint_{\varpi B_{r}} [u(x)]^{\frac{q\gamma}{\gamma-1}} dx \right)^{\frac{\gamma-1}{\gamma}} \notag\\
&  \le C_{RH}(a_i)_{\varpi B_{r}}  \big(\sup_{\varpi B_{r}} u\big)^{p_i-q + \frac{q}{\gamma}}  \left(\fint_{\varpi B_{r}} u^{q} dx \right)^{\frac{\gamma-1}{\gamma}} \notag \\
& =  C_{RH}(a_i)_{\varpi B_{r}}  \big(\sup_{\varpi B_{r}} u\big)^{p_i}  \left(\frac{1}{\sup_{\varpi B_{r}} u^{q}}\fint_{\varpi B_{r}} u^{q} dx \right)^{\frac{\gamma-1}{\gamma}}. \label{H-x-u-2}
\end{align}
Summing \eqref{H-x-u-2} for all indices $i \in \{0,1,\dots,N\}$, we arrive at
\begin{align}\label{H-x-u}
\fint_{\varpi B_{r}} \mathcal{H}(x,u) dx & \le C_{RH}\left(\frac{1}{\sup_{\varpi B_{r}} u^{q}} \fint_{\varpi B_{r}} u^q dx\right)^{\frac{\gamma-1}{\gamma}} \sum_{i=0}^N (a_i)_{\varpi B_{r}}  \big(\sup_{\varpi B_{r}} u\big)^{p_i} \notag \\
& \le C_{RH}\left(\frac{1}{\sup_{\varpi B_{r}} u^{q}} \fint_{\varpi B_{r}} u^q dx\right)^{\frac{\gamma-1}{\gamma}} \Phi_{\varpi B_{r}}\big(\sup_{\varpi B_{r}} u\big).
\end{align}
At this stage, thanks to Lemma \ref{lem-coro-phi-1}, we can make use of the structural property 
$$\Phi^{-1}_{\varpi B_r}(s \Phi_{\varpi B_r}(t)) \le s^{\frac{1}{p_N}} t, \ \mbox{ for any } s \in (0,1] \mbox{ and } t \ge 0.$$ 
Substituting \eqref{H-x-u} into \eqref{H-x-u-1} and absorbing the structural constant $C_{RH}$ into a generic constant $C$, it follows that
\begin{align}
\sup_{\sigma B_{r}} u & \le \frac{C}{(\varpi-\sigma)^\frac{\theta p_N}{\theta-1}} \Phi_{\varpi B_{r}}^{-1}\left[\left(\frac{1}{\sup_{\varpi B_{r}} u^{q}} \fint_{\varpi B_{r}} u^q dx\right)^{\frac{\gamma-1}{\gamma}} \Phi_{\varpi B_{r}}\big(\sup_{\varpi B_{r}} u\big)\right] \notag \\
& \le \frac{C}{(\varpi-\sigma)^\frac{\theta p_N}{\theta-1}} \left(\frac{1}{\sup_{\varpi B_{r}} u^{q}} \fint_{\varpi B_{r}} u^q dx\right)^{\frac{\gamma-1}{\gamma p_N}} \big(\sup_{\varpi B_{r}} u\big) \notag \\
& \le \left[\frac{C}{(\varpi-\sigma)^\frac{\theta \gamma p_N^2}{q(\theta-1)(\gamma-1)}} \left(\fint_{\varpi B_{r}} u^q dx\right)^{\frac{1}{q}}\right]^{\frac{q(\gamma-1)}{\gamma p_N}} \big(\sup_{\varpi B_{r}} u\big)^{1-\frac{q(\gamma-1)}{\gamma p_N}}. \label{sup-Har-est-2}
\end{align}
Here, it is worth noting that since $0<q<p<p_N$ and $\gamma > 1$, one has
$$1-\frac{q(\gamma-1)}{\gamma p_N}>0.$$
Applying Young's inequality with $\varepsilon = \frac{1}{2}$ to \eqref{sup-Har-est-2}, we deduce
\begin{align}
\sup_{\sigma B_{r}} u & \le \frac{1}{2} \sup_{\varpi B_{r}} u + \frac{C}{(\varpi -\sigma)^{\delta_0}} \left(\fint_{\varpi B_{r}} u^q dx\right)^{\frac{1}{q}} \notag \\
& \le \frac{1}{2} \sup_{\varpi B_{r}} u + \frac{C}{(\varpi -\sigma)^{\delta_0}} \left(\fint_{B_{2r}} u^q dx\right)^{\frac{1}{q}}, \label{sup-Har-est-3}
\end{align}
where the exponent $\delta_0 > 1$ is given by 
$$\delta_0 = \frac{\theta  \gamma p_N^2}{q(\theta-1)(\gamma-1)} > 1.$$
To deal with \eqref{sup-Har-est-3}, let us introduce the bounded and non-decreasing function
\begin{align*}
G(s) = \sup_{s B_r} u, \quad s \in [1,2].
\end{align*}
We also consider a strictly increasing sequence $(s_m)_{m\ge0} \subset [1, 2]$ defined by 
$$s_0 = 1 \mbox{ and } s_{m+1}= s_m + \epsilon^m (1-\epsilon), \quad m \ge 0,$$ 
where $\epsilon \in (0,1)$ is a small constant to be chosen later. For every integer $m \ge 0$, applying~\eqref{sup-Har-est-3} with $\sigma = s_m$ and $\varpi = s_{m+1}$, one has
\begin{align}\notag
G(s_m) \le \frac{1}{2} G(s_{m+1}) + \frac{C}{\epsilon^{m \delta_0} (1-\epsilon)^{\delta_0}} \left(\fint_{B_{2r}} u^q dx\right)^{\frac{1}{q}}.
\end{align}  
By induction with respect to  $k \ge 1$, we observe
\begin{align}\label{sup-Har-est-4}
G(s_0) & \le 2^{-k} G(s_{k+1}) + \frac{C}{(1-\epsilon)^{\delta_0}} \left(\fint_{B_{2r}} u^q dx\right)^{\frac{1}{q}} \sum_{m=0}^{k} \left(\frac{1}{2\epsilon^{\delta_0}}\right)^m.
\end{align}
Since $\delta_0 > 1$, it is always possible to choose $\epsilon \in (0,1)$ sufficiently close to $1$ such that $2\epsilon^{\delta_0} > 1$. This choice ensures that the geometric series on the right-hand side of \eqref{sup-Har-est-4} is bounded by a finite constant independent of $k$. Since $u \in L_{\mathrm{loc}}^\infty$, $G(s_{k+1}) \le G(2) < \infty$, it enables us to send $k \to \infty$ in \eqref{sup-Har-est-4}, and we readily obtain \eqref{ineq-sup-Harnack}. The proof is now complete.
\end{proof}

\subsection{Proof of Theorem \ref{theo:Harnack}}

\begin{proof}[Proof of Theorem \ref{theo:Harnack}]
The Harnack inequality \eqref{ineq-Harnack} follows directly from the local supremum estimate and the weak Harnack inequality from Lemmas \ref{lem-E-Br} and \ref{lem-sup-Har}, respectively. Specifically, applying Lemma \ref{lem-sup-Har} to the ball $B_r(x_0)$, we obtain
\begin{align*}
\sup_{B_r(x_0)} u \le C_1 \left( \fint_{B_{2r}(x_0)} u^q  dx \right)^{\frac{1}{q}},
\end{align*}
where $q \in (0, p)$. Next, by invoking the weak Harnack inequality established in Lemma \ref{lem-inf-Har}, we have
\begin{equation*}
\left( \fint_{B_{2r}(x_0)} u^q  dx \right)^{\frac{1}{q}} \le C_2 \inf_{B_r(x_0)} u.
\end{equation*}
Combining these two estimates, we conclude
\begin{equation*}
\sup_{B_r(x_0)} u \le C_1 C_2 \inf_{B_r(x_0)} u = C \inf_{B_r(x_0)} u,
\end{equation*}
where $C = C(\mathtt{data}) \ge 1$. This completes the proof of Theorem \ref{theo:Harnack}.
\end{proof}

\section*{Acknowledgement}
This research is funded by Vietnam National Foundation for Science and Technology Development (NAFOSTED), Grant Number: 101.02-2025.03.

\section*{Conflict of Interest}
The authors declared that they have no conflict of interest.

\section*{Declarations}
Data sharing not applicable to this article as no datasets were generated or analysed during the current study.

\newpage
%%%%%%%%%%%%%%%%%%%%%%%%%%%%%%%

\end{document}